\newtheorem{thm}{Theorem}[section]
\newtheorem{cor}[thm]{Corollary}
\newtheorem{lem}[thm]{Lemma}
\newtheorem{prop}[thm]{Proposition}
\theoremstyle{definition}
\newtheorem{cond}[thm]{Condition}
\newtheorem{rem}{Remark}
\DeclareMathOperator{\Var}{Var}
\newcommand{\R}{\mathbb{R}}
\newcommand{\supp}{\textrm{supp}}
\newcommand{\Z}{\mathbb{Z}}
\newcommand{\La}{\mathbb{L}}
\newcommand{\eps}{\varepsilon}
\DeclareMathOperator{\SO}{SO}
\newcommand{\Ha}{\mathcal{H}}
\title{Asymptotic variance of grey-scale surface area estimators}
\author{Anne Marie Svane}
\begin{document}

\maketitle

\begin{abstract}
%
Grey-scale local algorithms have been suggested as a fast way of estimating surface area from grey-scale digital images. Their asymptotic mean has already been described. In this paper, the asymptotic behaviour of the variance is studied in isotropic and sufficiently smooth settings, resulting in a general asymptotic bound. For compact convex sets  with nowhere vanishing Gaussian curvature, the asymptotics can be described more explicitly. As in the case of volume estimators, the variance is decomposed into a lattice sum and an oscillating term of at most the same magnitude.
\end{abstract}

\section{Introduction}
The motivation for this paper comes from digital image analysis. Scientists in e.g.\ materials science and neurobiology are analysing digital output from microscopes and scanners in order to gain geometric information about materials \cite{digital, mecke2}. Common features of interest are volume, surface area, and Euler characteristic, as well as curvature and anisotropy properties. The focus of this paper will be on surface area estimation. Convergent algorithms for surface area are known {\cite{coeurjolly,sun}, but as the amount of output data is typically quite large, there is a need for faster algorithms. 

The simplest model for a digital image is a black-and-white image. If $X\subseteq \R^d$ is the object under study, the set of black pixels is modeled by $X\cap \La$  where $\La$ is a lattice. It is well known that the volume of $X$ can be estimated by $c_\La \cdot \#(X\cap \La)$ where $c_\La $ is the volume of a lattice cell and $\#$ is cardinality. If $\La$ is randomly translated, the mean estimate is exactly the volume.  

Surface area is often estimated in a similar way \cite{digital, lindblad, mecke2}. The idea is to count the number of times each of the $2^{n^d}$ possible $n\times \dots \times n$ configurations of black and white points appear in the image and estimate the surface area by a weighted sum of configuration counts. 
The advantage of these so-called local algorithms is that the computation time is linear in the data amount, see \cite{OM}. However, they are generally biased, even when the resolution tends to infinity \cite{am3,johanna}. 

A more realistic model for a digital image is that of a grey-scale image where we do not observe the indicator function $\mathds{1}_X$ for $X$ itself on $\La$, but rather its convolution $\mathds{1}_X * \rho $ with a point spread function (PSF) $\rho$. In \cite{am4}, local algorithms for grey-scale images are suggested and these are shown to be asymptotically unbiased when the lattice is stationary random and the PSF becomes concentrated near 0. They resemble the volume estimators as they are also given by lattice point counting, but each lattice point must now be weighted according to its grey-value. A simple such algorithm is given by counting the number of lattice points with grey-value belonging to a fixed interval.

So far, not much is known about the precision of these algorithms. Even though the mean converges, the variance may be large. While the convergence of the mean is independent of resolution, a low resolution would intuitively result in a large variance. The purpose of this paper is to study the variance using theory developed for the volume case.

The first part of the paper provides an asymptotic bound on the variance when resolution and PSF changes. It shows that the biggest contribution comes from the resolution. The bound explicitly depends on the algorithm and the underlying PSF.

The asymptotic bound is rather abstract and thus not useful for applications. In the second part of the paper, more explicit formulas for the variance are derived. As in the volume case, this requires strong conditions on the underlying set, namely smoothness, convexity, and nowhere vanishing Gaussian curvature. As in the volume case \cite{matheron}, the variance can be decomposed into a lattice sum depending only on $X$ through its surface area and an oscillating term of at most the same magnitude.

The model for grey-scale images is introduced in Subsection \ref{g-sdef} and local estimators and the known results about their mean are described in Subsection \ref{thealg}. A short recap of some of the known results for volume estimators is included for comparison in Subsection \ref{varknow} before the main results of the paper are described in Subsection \ref{result}. In the following sections, the main results are formally stated and proved. The paper ends with a discussion of the results and a list of open questions.

\section{Set-up and main results}\label{set-up}
\subsection{Grey-scale images}\label{g-sdef}
Consider a blurred image of a compact set $X\subseteq \R^d$, $d\geq 2$. That is, we do not measure $X$ itself, but only an intensity function
\begin{equation*}
\theta^X :=\mathds{1}_X*\rho: \R^d \to [0,1]
\end{equation*}
 where $\rho$ is a PSF. This is assumed to satisfy:
\begin{itemize}
\item $\rho \geq 0$.
\item $\int_{\R^d} \rho(x) dx =1$.
\item $\rho(x)=\rho(|x|)$ for all $x\in \R^d$.
\end{itemize}
The third condition is necessary in order to obtain the asymptotic unbiasedness results of \cite{am4}. The variance is also of interest for more general $\rho$, but we restrict to the radial case in this paper for simplicity.

 We consider the following transformation of $\rho$
\begin{equation*}
\rho_a(x) = a^{-d} \rho(a^{-1}x).
\end{equation*}
The corresponding intensity function becomes
\begin{equation*}
\theta_a^X := \mathds{1}_X*\rho_a.
\end{equation*}
A digital grey-scale image is modeled as the restriction of $\theta_a^X$ to some observation lattice. 

Let $\La \subseteq \R^d$ be a fixed lattice given by $\La = A \Z^d$ for some invertible matrix $A$. The fundamental cell of $\La$ is denoted $C_\La :=A([0,1)^{d})$. The volume of $C_\La$ is $c_\La = \det A$. The dual lattice is $\La^* = A^{-1} \La$. We shall also consider translation and rotation $Q\La_c = Q( \La+c)$ of the lattice by $c\in C_\La$ and $Q\in \SO(d)$. 

A change of resolution corresponds to a scaling of the lattice by a factor $b>0$. Hence we shall generally be working with the observation lattice $bQ\La_c$.
We often assume that the resolution is a function of $a$, i.e.\  $b:=b(a)$. The case $b=a$ is of particular interest, see the discussion in \cite[Sect. 2.1]{am4}.

The intensity function $t \mapsto \theta^{H_u}_a(tu)$ associated to the halfspace $H_u=\{y\in \R^d \mid \langle y,u\rangle \leq 0\}$ plays a special role. Since it is independent of $u\in S^{d-1}$ and $\theta^{H_u}_a(atu) = \theta^{H_u}_1(tu)$, we shall use the notation
\begin{equation*}
\theta^H(t):= \theta^{H_u}_a(atu)
\end{equation*}
for any $a>0$, $u\in S^{d-1}$. 

\subsection{Grey-scale local algorithms}\label{thealg}
First consider an estimator  for the surface area $S(X)$ of the form 
\begin{equation*}
\hat{S}_0(f)^{a,b}(X)=  a^{-1 }b^d\sum_{z\in b\La_c} f\circ \theta_a^X(z)
\end{equation*}
where the weight function $f:[0,1 ] \to \R$ can be any bounded measurable function that is continuous on $[\beta,\omega]\subseteq (0,1) $ with support $\supp f \subseteq [\beta, \omega] $, possibly having discontinuities at $\beta, \omega$.

Assume that $c\in C_{\La}$ is uniform random so that $b\La_c$ is a stationary random lattice. Then the mean estimator is  
\begin{equation}\label{mean0}
E\hat{S}_0(f)^{a,b}(X)=  a^{-1}c_\La^{-1}\int_{\R^d} f\circ \theta_a^X(z) dz.
\end{equation}
It is shown in \cite{am4} that if $X$ is a $C^1$ manifold (or more generally a so-called gentle set, see \cite{rataj}), then
\begin{equation}\label{mean}
\lim_{a \to 0} E\hat{S}_0(f)^{a,b}(X)= c_\La^{-1}  S(X) \int_\R f\circ \theta^{H} (t) dt
\end{equation}
under mild conditions on $\rho$. Equation 
\eqref{mean} is shown in \cite{am4} when $b=a$, but since \eqref{mean0} is independent of $b$, it holds for any function $b(a)$.  

It follows that if 
\begin{equation*}
\alpha_f := \int_\R f\circ \theta^{H}(t) dt \neq 0,
\end{equation*}
then
\begin{equation*}
\hat{S}(f)^{a,b}:= c_\La \alpha_f^{-1} \hat{S}_0(f)^{a,b}
\end{equation*}
is asymptotically unbiased for $a\to 0$. 

The next problem is to describe the variance. An explicit formula is interesting for estimation purposes. But even weaker results may give a hint about the quality of the algorithm. For instance, \eqref{mean} requires nothing of $f$ or $b$, but clearly, if $b(a)$ is large compared to $a$, we expect to see a large variance. Moreover, a good criterion for the choice of weight function $f$ would be that it has small variance.

To study the variance, we will assume that the lattice is also rotated by a uniform random $Q\in \SO(d)$. 
Write $g_a = f\circ \theta_a^X$ and $g_a^-(x)=g_a(-x)$ for simplicity and consider
\begin{align}\label{varcomp}
E\Big(\hat{S}(f)^{a,b}(X)^2\Big){}& =a^{-2}b^{2d}c_\La^2 \alpha_f^{-2}E\bigg(\sum_{z\in bQ\La_c}  g_a(z)\bigg)^2\\\nonumber
&=a^{-2}b^{2d}c_\La^2\alpha_f^{-2} \int_{\SO(d)}\sum_{z_2\in bQ\La} \int_{C_\La} \bigg(\sum_{z_1\in bQ\La_c}  g_a(z_1) g_a(z_1+z_2)\bigg)dcdQ\\ \nonumber
&=a^{-2} b^{d}c_\La\alpha_f^{-2} \sum_{z_2\in b\La} \int_{\SO(d)} g_a * g_a^- (-Qz_2)dQ\\ \nonumber
&=a^{-2} \alpha_f^{-2}\omega_{d}^{-1} \sum_{\xi \in \La^*} \int_{S^{d-1}}|\mathcal{F} (g_a)(b^{-1}|\xi|u) |^2 du.
\end{align}
Here $\mathcal{F} $ denotes the Fourier transform
\begin{equation*}
\mathcal{F} (g_a)(\xi ) = \int_{\R^d} g_a(x) e^{-2\pi i x\cdot \xi} dx.
\end{equation*}
and $\omega_{d} $ is the surface area of $S^{d-1}$.
The last equality in \eqref{varcomp} follows from the Poisson summation formula which applies if $z \mapsto \int_{S^{d-1} } g_a * g_a^- (|z|u)du $ is continuous and the latter sum is convergent \cite[VII, Cor. 1.8]{stein}. 

Since
\begin{align*}
 (a  \alpha_f)^{-1} \mathcal{F} (g_a) (0) = (a \alpha_f)^{-1} \int_{\R^d} g_a(z)dz = E\hat{S}(f)^{a,b}(X),
\end{align*}
the variance is given by
\begin{equation}\label{formelvar}
\Var(\hat{S}(f)^{a,b}(X)) = (a  \alpha_f)^{-2}  \omega_{d}^{-1}\sum_{\xi \in \La^*\backslash \{0\}} \int_{S^{d-1}}|\mathcal{F} (g_a)(b^{-1}|\xi|u) |^2 du.
\end{equation}

\subsection{Known results for volume estimators}\label{varknow}
The volume estimator for black-and-white images mentioned in the introduction
\begin{equation*}
\hat{V}^b(X) = b^d c_\La \cdot \#(X\cap bQ\La_c)
\end{equation*} 
is unbiased. Describing its variance is a classical topic. The case of a ball goes back to \cite{kendall, kendall2} and this was generalised  in \cite{herz,hlawka} to smooth compact convex sets with nowhere vanishing Gaussian curvature. The variance is studied from a statistical viewpoint in \cite{matern, matheron}. For more recent developments, see \cite{brandolini, janacek, kieu}. 

 Replacing $g_a$ with $\mathds{1}_X$ in \eqref{varcomp} shows that the variance is given by
\begin{equation*}
\Var(\hat{V}^b (X))  = \omega_{d}^{-1} \sum_{\xi \in \La^*\backslash \{0\}} \int_{S^{d-1}}|\mathcal{F} (\mathds{1}_X)(b^{-1}|\xi|u) |^2 du.
\end{equation*}
In \cite{brandolini} it is shown for $X$ convex or $C^{\frac{3}{2}}$ that 
\begin{equation*}
 \int_{S^{d-1}}|\mathcal{F} (\mathds{1}_X)(Ru) |^2 du \in O(R^{-d-1})
\end{equation*}
from which it follows that
\begin{equation*}
\Var(\hat{V}^b (X))  \in O(b^{d+1}).
\end{equation*}
In \cite{janacek} these results were used to give a description of the asymptotic variance.

When $X$ is  a smooth compact convex set of nowhere vanishing Gaussian curvature $K$, a more explicit formula is given in \cite{herz,hlawka}. They show that
\begin{equation*}
|\mathcal{F} (\mathds{1}_X)(Ru)|^2 = R^{-d-1} (K(x(u))^{-1} + K(x(-u))^{-1} + Z_u(R)) +O(R^{-d-2})
\end{equation*}
where $x(u)$ is the unique point with normal vector $u$ and $Z_u(R)$ oscillates between $\pm  2(K(x(u))K(x(-u)))^{-\frac{1}{2}}$.
Similarly, 
\begin{equation*}
\int_{S^{d-1}}|\mathcal{F} (\mathds{1}_X)(Ru)|^2 du = 2 S(X) R^{-d-1}(1 + Z(R)) + O(R^{-d-2})
\end{equation*}
with $|Z(R)|\leq 1$. 

To get rid of the oscillating term, it is the idea of \cite{kieu} to consider a set $X$ scaled by a random factor $s\in (0,\infty)$ with continuous density. Then the mean of $Z(R)$ vanishes asymptotically, that is,
\begin{equation*}
\lim_{R\to \infty} R^{d+1} E|\mathcal{F} (\mathds{1}_X)(Ru)|^2 = 2\omega_{d}^{-1}ES(sX).
\end{equation*}
From this, the authors obtain an explicit formula for the asymptotic variance
\begin{equation*}
\lim_{b\to 0} b^{-d-1}\Var(\hat{V}^b (sX))  = 2\omega_{d}^{-1}ES(sX)  \sum_{\xi \in \La^*\backslash \{0\}} |\xi|^{-d-1}.
\end{equation*}
This formula is convenient for applications, since it only requires an estimate for the mean surface area.

For grey-scale images, there are unbiased volume estimators given by
\begin{equation*}
\hat{V}^{a,b}(X) = b^d c_\La \sum_{z\in bQ\La_c} \theta_a^X(z).
\end{equation*} 
The special case where $\rho $ is the indicator function for a sampling figure is considered in \cite{kieu} . The variance is given by
\begin{equation*}
\Var(\hat{V}^{a,b} (X))  = \omega_{d}^{-1} \sum_{\xi \in \La^*\backslash \{0\}} \int_{S^{d-1}}|\mathcal{F} (\rho)(a b^{-1}|\xi|u)|^2 |\mathcal{F} (\mathds{1}_X)(b^{-1}|\xi|u)|^2du .
\end{equation*}
Since $|\mathcal{F} (\rho)(\xi)|\leq 1$, this generally yields a smaller variance.

\subsection{Description of main results}\label{result}
In Section \ref{X} we shall give an asymptotic bound on the Fourier coefficients in \eqref{formelvar} by an argument similar to \cite{brandolini}. This provides an asymptotic bound on the variance. Under suitable conditions on $\rho$ and assuming $f$ to be $C^3$ on all of $(0,1)$ and $X$ to be $C^3$, Theorem \ref{vargeneral} below shows that
\begin{equation}\label{limsupvar}
\limsup_a ab^{-d}\Var(\hat{S}(f)^{a,b}(X)) \leq M_X \frac{\alpha_{|f|}}{\alpha_f^2}\int_\R |(f\circ \theta^H)'(t)|dt
\end{equation}
where $M_X>0$ is a constant depending only on $X$ and $\La$.

When $a=b$ the variance is of order $O(a^{d-1})$ and we shall see that this is  best possible.
However, the convergence rate \eqref{limsupvar} is not best possible for general functions $b(a)$.
In Section \ref{ballsec}, the case where $X$ is the ball $B(R)$ of radius $R$ is investigated further. The strong symmetry allows explicit formulas to be derived. This is used to investigate the convergence rate for $b$ in various regimes. 
It can always be improved when $b \in o(a)$, in fact,
\begin{equation}\label{hurtig}
\limsup_a a^2 b^{-d-1}\Var(\hat{S}(f)^{a,b}(X)) \leq M_X \frac{1}{\alpha^2_f}\bigg(\int_\R |(f\circ \theta^H)'(t)|dt\bigg)^2.
\end{equation}
The best possible rate, however, strongly depends on the smoothness assumptions. When $b^{-1}a\in o(1)$, the precise convergence rate is not known, but we shall see that it cannot be faster than $a^{-2}b^{d-1}$.

%
 %
The proof of the central Lemma \ref{generalF} yields an approximation formula for the Fourier coefficients. 
When $X$ is smooth and convex with nowhere vanishing Gaussian curvature, this, together with theory developed for the volume case, is applied in Corollary \ref{detcor} below to describe the asymptotic variance as 
\begin{align*}
 \Var(\hat{S}(f)^{a,a}(X)) ={}&  2a^{d-1}  \omega_{d}^{-1}\alpha_f^{-2} S(X) \bigg(\sum_{\xi \in \La^*\backslash \{0\}}\big| \mathcal{F}(f\circ \theta^H)(|\xi|) \big|^2|\xi|^{-d+1} + Z(a)\bigg) \\&+o(a^{d-1})
\end{align*}
where $Z(a)$ is in general an oscillating term depending on $X$ and satisfying
\begin{equation*}
\limsup_a \pm Z(a) \leq \sum_{\xi \in \La^*\backslash \{0\}} \big| \mathcal{F}(f\circ \theta^H)(|\xi|) \big|^2|\xi|^{-d+1}.
\end{equation*}

Assuming that $X$ also has a random radius $s$ with smooth, compactly supported density, $\hat{S}(f)^{a,a}(sX)$ is an asymptotically unbiased estimator for the mean surface area. As in the volume case, the oscillating term vanishes asymptotically in the mean, that is,
\begin{equation*}
\lim_{a\to 0} a^{-d+1}\Var(\hat{S}(f)^{a,a}(X)) =  2 \omega_{d}^{-1}\alpha_f^{-2} ES(sX) \sum_{\xi \in \La^*\backslash \{0\}} \big| \mathcal{F}(f\circ \theta^H)(|\xi|) \big|^2|\xi|^{-d+1}, 
\end{equation*}
see Theorem \ref{randomdet} below. Again, this expression only depends on $X$ through its surface area. The remaining lattice sum depends only on the chosen $f$ and the underlying PSF $\rho$ and can in principle be computed once the function $\theta^H$ is known.  

\section{Prerequisites}
In this section we introduce some more notation and assumptions that will be used throughout and prove a couple of technical lemmas about grey-scale images. 

\subsection{General assumptions and notation}
We will assume that the object $X $ we observe is a compact manifold with $C^3$ boundary. In particular, it allows a tubular neighborhood $T^r$ of radius $r$. 
We let $\xi_{\partial X} : T^r \to \partial X$ be projection onto the boundary. 


The function $\theta^H$ is always decreasing. 
Suppose it is differentiable. If
\begin{equation}\label{thetacond}
\frac{d}{dt}\theta^H(t)<0 \text{ whenever } \theta^H(t) \in (0,1),
\end{equation}
then $\theta^H$ has an inverse defined on $(0,1)$ which we denote by $\varphi: (0,1) \to \R $.

We collect the assumptions we shall make on $\rho$ for later reference. They ensure in particular that $\theta^H$ is differentiable.
\begin{cond}\label{cond1}
$\rho$ is  $C^2$, has compact support, and satisfies \eqref{thetacond} and the conditions of Section \ref{g-sdef}.
\end{cond}

\begin{cond}\label{cond2}
$\rho$ is  $C^2$, satisfies \eqref{thetacond} and the conditions of Section \ref{g-sdef}, and  for some $s>d$
\begin{equation*}
\rho(x), | \nabla \rho (x) |,\bigg|\frac{\partial^2\rho}{\partial x_i \partial x_j}(x)\bigg| \in O(|x|^{-s}). 
\end{equation*}
\end{cond}
The reasons for these assumptions will become clear in the following subsection. Note in particular that Condition \ref{cond2} is satisfied when $\rho$ is the Gaussian.

For short we shall write $g_a = f\circ \theta_a^X$. We choose $D>0$ such that $\supp \rho \subseteq B(D)$ in the case of compact support, and otherwise such that
\begin{equation*}
\int_{B(D)} \rho(z) dz \geq 1-\beta , \omega. 
\end{equation*}
Then $\supp g_a \subseteq T^{aD} \subseteq T^r$ for all $a$ sufficiently small. 

Given $x\in \partial X$, we let $H_x$ denote the supporting halfspace $x+H_{n(x)}$ where $n(x)$ is the unique outward pointing normal vector. Observe that $\theta^{H_x}_a(x+atn)=\theta^H(t)$. This explains why $\theta^H$ shows up in the asymptotic mean \eqref{mean}: It comes from approximating $X$ locally by its tangent halfspace.

For $x\in \partial X$, we write 
\begin{align*}
t_+(x,a){}&=\sup \{t\in [-aD,aD] \mid \theta_a^X(x+tn(x)) \geq \beta \}\\
t_-(x,a){}&=\inf \{t\in [-aD,aD] \mid \theta_a^X(x+tn(x)) \leq \omega\}.
\end{align*}
As we shall see below, our assumptions ensure that  for $a$ sufficiently small, $t_+(x,a)$ and $t_-(x,a)$ are the unique $t\in [-aD,aD]$ with the properties $\theta_a^X(x+tn(x))=\beta$ and $\theta_a^X(x+tn(x))=\omega$, respectively.

\subsection{Some lemmas about grey-scale images}
We now prove some technical lemmas that we need later.
\begin{lem}\label{diff}
Let $\alpha $ be a multiindex.
Suppose $f$ is $C^{|\alpha|+1}$ on $[\beta, \omega]$ and $\rho $ is $C^{|\alpha|}$ with compact support. 
There is a constant $M>0$ such that for all $a$ sufficiently small and all with $ \theta_a^X(y),\theta_a^{H_{\xi_{\partial X}(y)}}(y)\in [\beta,\omega]$, 
\begin{align*}
\bigg|\frac{\partial^{|\alpha|}}{\partial x^\alpha } f\circ \theta_a^X(y) - \frac{\partial^{|\alpha|}}{\partial x^\alpha }\Big(x\mapsto f\circ \theta^{H_{\xi_{\partial X}(y)}}_a\Big)(y) \bigg| \leq M a^{1-|\alpha|}.
\end{align*}

If $\rho$ does not have compact support, the above holds with $a^{\frac{s-d}{s+1}-|\alpha|}$ on the right hand side if
\begin{equation*}
\bigg|\frac{\partial^{|\gamma|}}{\partial x^{\gamma} } \rho(x) \bigg| \in O(|x|^{-s})
\end{equation*}
for all $|\gamma|\leq |\alpha|$. 
\end{lem}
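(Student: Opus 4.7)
The strategy is to expand $\partial^\alpha(f\circ\theta_a^X)(y)$ via Fa\`a di Bruno's formula as a finite sum of products of the shape
\begin{equation*}
f^{(k)}(\theta_a^X(y))\,\prod_{i=1}^{k}\partial^{\beta_i}\theta_a^X(y),\qquad \sum_{i=1}^{k}\beta_i=\alpha,
\end{equation*}
and to compare each such term with its analogue for $\theta_a^{H_x}$ in place of $\theta_a^X$, where $x:=\xi_{\partial X}(y)$. The telescoping identity $A_1\cdots A_n-B_1\cdots B_n=\sum_i A_1\cdots A_{i-1}(A_i-B_i)B_{i+1}\cdots B_n$ reduces the task to two ingredients: a mean value bound on $|f^{(k)}(\theta_a^X(y))-f^{(k)}(\theta_a^{H_x}(y))|$ (available because both arguments lie in $[\beta,\omega]$ where $f^{(k+1)}$ is bounded), together with pointwise control of
\begin{equation*}
\bigl|\partial^\beta\theta_a^X(y)-\partial^\beta\theta_a^{H_x}(y)\bigr| \leq \int_{X\triangle H_x} a^{-d-|\beta|}\bigl|(\partial^\beta\rho)(a^{-1}(y-z))\bigr|\,dz
\end{equation*}
for every multiindex $\beta$ with $|\beta|\leq|\alpha|$.

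The hypothesis $\theta_a^{H_x}(y)\in[\beta,\omega]$ pins $y-x$ to a normal segment whose length is comparable to $a$ in the compact-support case and a bounded multiple of $a$ in the tail case, so $y\in T^r$ for $a$ small and the projection $x$ is well defined. I would then use the $C^3$ regularity of $\partial X$ together with compactness to represent $\partial X$ locally at $x$ as the graph of a $C^3$ function over its tangent hyperplane vanishing to second order at $x$, producing the key geometric estimate
\begin{equation*}
\mathrm{vol}\bigl((X\triangle H_x)\cap B(y,\eps)\bigr)=O(\eps^{d+1})\quad\text{as }\eps\to 0,
\end{equation*}
uniform in $x\in\partial X$ and in $y$ satisfying the membership hypothesis.

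In the compact-support case, $\supp\rho\subseteq B(D)$ confines the integrand above to $B(y,aD)$, where it is uniformly bounded by $\|\partial^\beta\rho\|_\infty a^{-d-|\beta|}$; combined with the geometric estimate at $\eps=aD$ this gives $O(a^{1-|\beta|})$. Reassembling the Fa\`a di Bruno sum, and using that each untouched factor $\partial^{\beta_j}\theta_a^{H_x}(y)$ is $O(a^{-|\beta_j|})$ by direct rescaling, each telescoped summand contributes $O(a^{1-|\alpha|})$, as claimed. For the tail case I would split the integral at a radius $R$ around $y$: the inner part is bounded as before by $O(a^{-d-|\beta|}R^{d+1})$, while the outer part uses $|\partial^\gamma\rho(v)|=O(|v|^{-s})$ and the substitution $v=a^{-1}(y-z)$ to yield $O(a^{s-d-|\beta|}R^{d-s})$; optimising at $R=a^{s/(s+1)}$ balances the two contributions at $O(a^{(s-d)/(s+1)-|\beta|})$, and the same Fa\`a di Bruno assembly yields the rate $a^{(s-d)/(s+1)-|\alpha|}$.

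The main obstacle is uniformity throughout: the constant in $\mathrm{vol}((X\triangle H_x)\cap B(y,\eps))=O(\eps^{d+1})$ must be independent of $x\in\partial X$ and of the precise normal coordinate of $y$, and the Fa\`a di Bruno assembly must not introduce a constant depending on $y$ beyond the membership condition. Both hinge on compactness of $\partial X$ and continuity of its second fundamental form, which is where the $C^3$ assumption on $\partial X$ (together with $C^{|\alpha|}$ regularity of $\rho$ and $C^{|\alpha|+1}$ regularity of $f$ on $[\beta,\omega]$) is indispensable.
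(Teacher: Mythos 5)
Your proposal is correct and takes essentially the same route as the paper: the base case bounds $|(\mathds{1}_X-\mathds{1}_{H_x})*\rho_a(y)|$ by the volume of $(X\triangle H_x)$ intersected with an $O(a)$-neighbourhood of $y$, the higher-order case uses $\partial^\beta(\mathds{1}_X*\rho_a)=a^{-|\beta|}\mathds{1}_X*(\partial^\beta\rho)_a$ together with bounds on $f^{(k)}$, and the non-compact case splits the integral at $R=a^{s/(s+1)}$ exactly as you do. The Fa\`a di Bruno expansion with the telescoping identity is simply the explicit bookkeeping behind the paper's "the case of higher order derivatives follows similarly."
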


\begin{proof}
Writing $x=\xi_{\partial X}(y)$, we compute
\begin{align*}
\MoveEqLeft  \Big| f\circ \theta_a^X(y) -   f\circ \theta^{H_x}_a(y) \Big|
\leq \sup |f'|\Big|(\mathds{1}_X -  \mathds{1}_{H_x}) * \rho_a (y) \Big|\\
&\leq a^{-d} \sup |f'| \sup| \rho| \lambda \big((X \Delta H_x) \cap ([x-rn(x),x+rn(x)]\oplus B(aD))\big)\\
&\leq Ma
\end{align*}
where $\Delta $ denotes the symmetric difference, $\lambda$ is Lebesgue measure, $[\cdot,\cdot]$ is the line segment and $\oplus $ is the Minkowski sum. 

The case of higher order derivatives follows similarly, using the fact that 
\begin{align*}
\frac{\partial}{\partial x_i } (\mathds{1}_X * \rho_a) (y) = a^{-1} \mathds{1}_X * \bigg( \frac{\partial}{\partial x_i } \rho \bigg)_a(y).
\end{align*}

%

In the non-compact case, for $R< r-aD$ 
\begin{align*}
|(\mathds{1}_X -  \mathds{1}_{H_x}) * \rho_a(y)| 
&\leq M a^{-d} R^{d+1} \sup{|\rho |} + \int_{|x|> a^{-1}R} |x|^{-s} dx\\
&\leq M' a^{\frac{s-d}{s+1}}
\end{align*}
where the last inequality follows by choosing $R=a^{\frac{s}{s+1}}$.
\end{proof}

We shall also need the following lemma, see \cite[Eq. (7.6)]{am4} for the first case and \cite[Lem. 7.1 and 7.2]{am4} for the second case.

\begin{lem}\label{tlemma}
Let $\rho$ satisfy Condition \ref{cond1}.
Then there is a constant $M$ uniform in $x\in \partial X$ such that for $a$ sufficiently large
\begin{equation*}
|t_+(x,a) - a\varphi(\beta)| , |t_-(x,a)- a\varphi(\omega) | \leq Ma^2.
\end{equation*}
When $\rho$ satisfies Condition \ref{cond2}, the same holds with $Ma^{\frac{s-d}{s+1}+1}$ on the right hand side.
\end{lem}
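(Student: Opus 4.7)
The plan is to combine the $O(a)$ pointwise estimate $|\theta_a^X - \theta_a^{H_x}| \leq Ma$ (immediate from the convolution bound in the proof of Lemma \ref{diff}) with the scaling identity $\theta_a^{H_x}(x + tn(x)) = \theta^H(t/a)$ and a Lipschitz inversion of $\theta^H$ near $\beta$ and $\omega$ made available by Condition \ref{cond1}. Geometrically, each $t_\pm(x,a)$ locates a specific level of $\theta_a^X$ along the normal line at $x$; the aim is to transfer a pointwise error on $\theta_a^X$ into a level-set error via the inverse function $\varphi$.

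First, I would check that for $a$ sufficiently small the sup in the definition of $t_+(x,a)$ is attained in the interior of $[-aD, aD]$ with $\theta_a^X(x + t_+ n(x)) = \beta$ exactly. Since $\theta^H(-D) = 1$ and $\theta^H(D) = 0$ (from $\supp \rho \subseteq B(D)$ and the outward normal convention), the pointwise bound forces $\theta_a^X(x \pm aD n(x))$ outside $[\beta, \omega]$ for small $a$, and continuity of the convolution delivers the level value. Substituting $y = x + t_+(x,a) n(x)$ into the pointwise estimate and using $\beta = \theta^H(\varphi(\beta))$ yields
\[ |\theta^H(t_+(x,a)/a) - \theta^H(\varphi(\beta))| \leq Ma. \]
Because $\rho$ is $C^2$ and $(\theta^H)'(\varphi(\beta)) \neq 0$ by \eqref{thetacond}, the inverse $\varphi$ is $C^1$ on an open interval around $\beta$ with a Lipschitz constant $L$ depending only on $\rho$; once $a$ is small enough that $t_+(x,a)/a$ lies in the corresponding neighborhood of $\varphi(\beta)$, applying $\varphi$ gives $|t_+(x,a) - a\varphi(\beta)| \leq LMa^2$. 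The argument for $t_-$ is identical with $\omega$ and $\varphi(\omega)$ in place of $\beta$ and $\varphi(\beta)$, and under Condition \ref{cond2} the weaker pointwise estimate $Ma^{(s-d)/(s+1)}$ propagates through the same chain to give the stated non-compact bound.

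The main obstacle is ensuring uniformity in $x \in \partial X$: uniformity of the pointwise estimate reduces to a uniform curvature bound on $\partial X$, automatic from compactness and $C^3$ regularity, while the Lipschitz constant $L$ is already uniform because $\theta^H$ does not depend on $x$. Beyond this mild bookkeeping, the proof is a chain of local inversions: an error on $\theta$ becomes an error on $\theta^H$, then an error on $\varphi$, which becomes an error on $t_\pm$ after rescaling by $a$.
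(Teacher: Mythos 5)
The paper does not include its own proof of this lemma --- it defers entirely to Eq.\ (7.6) and Lemmas 7.1--7.2 of \cite{am4} --- so there is nothing here to compare against line by line. Your self-contained argument is nonetheless sound and follows the natural route: the pointwise comparison $|\theta_a^X(y) - \theta_a^{H_x}(y)| = |(\mathds{1}_X - \mathds{1}_{H_x})*\rho_a(y)| \le Ma$ (resp.\ $M a^{(s-d)/(s+1)}$), available from the first display in the proof of Lemma~\ref{diff} without the weight $f$ and uniform in $x$ by compactness, together with the identity $\theta_a^{H_x}(x + tn(x)) = \theta^H(t/a)$, puts $\theta^H(t_\pm(x,a)/a)$ within $O(a)$ of $\beta$ or $\omega$; the inverse $\varphi$ of $\theta^H$, which is $C^1$ with uniformly bounded derivative near $\beta,\omega$ thanks to \eqref{thetacond} and does not depend on $x$, then gives $|t_\pm(x,a)/a - \varphi(\beta)|$, $|t_\pm(x,a)/a - \varphi(\omega)|$ of order $a$, and multiplying by $a$ yields the stated rates.

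One step deserves more care than you give it. You justify that $t_\pm$ is attained in the interior of $[-aD,aD]$ with $\theta_a^X(x + t_\pm n) = \beta, \omega$ exactly by appealing to $\theta^H(\pm D) = 0,1$, which is correct under Condition~\ref{cond1} (compact support). Under Condition~\ref{cond2}, however, the choice of $D$ only guarantees $\theta^H(D) \le \beta$ and $\theta^H(-D) \ge \omega$, so $t_\pm$ could a priori sit at the endpoint $\pm aD$ without hitting the level exactly. The conclusion survives even then --- the closed-set definition of $t_\pm$ plus the pointwise estimate still confines $\theta^H(t_\pm/a)$ to the same $O(a^{(s-d)/(s+1)})$ window around $\beta,\omega$ --- but this requires a sentence rather than the flat assertion that the level is attained. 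Finally, ``for $a$ sufficiently large'' in the lemma statement is evidently a typo for ``sufficiently small'', and you are right to read it that way.
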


\begin{cor} \label{voks}
Let $\rho$ satisfy Condition \ref{cond2}.
Then 
\begin{align*}
\sup \bigg\{ \frac{d}{dt}\theta^X_a(x+tn) \mid x\in \partial X , t \in [t_-(x,a),t_+(x,a)] \bigg\} < 0
\end{align*}
for all $a$ sufficiently small. In particular, $t\mapsto \theta^X_a(x+tn)$ is strictly decreasing on $[t_-(x,a),t_+(x,a)] $.
\end{cor}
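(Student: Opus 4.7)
The strategy is to transfer the strict monotonicity of $\theta^H$, guaranteed by \eqref{thetacond}, to $t\mapsto\theta_a^X(x+tn(x))$ via the halfspace approximation $H_x$ used throughout the paper. The heuristic is that the normal derivative of $\theta_a^{H_x}$ is of exact order $1/a$ and bounded away from $0$, while Lemma \ref{diff} controls the discrepancy between the normal derivatives of $\theta_a^X$ and $\theta_a^{H_x}$; the former then wins.

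First I would set up the halfspace bound. Since $\rho$ is $C^2$, $\theta^H$ is $C^2$, and \eqref{thetacond} gives $(\theta^H)'<0$ on the compact interval $I=[\varphi(\omega),\varphi(\beta)]\subset(0,1)$ where $\theta^H\in[\beta,\omega]$. By continuity there exist $c,\delta>0$ with $(\theta^H)'(s)\leq -c$ on the enlargement $I_\delta=[\varphi(\omega)-\delta,\varphi(\beta)+\delta]$. Lemma \ref{tlemma} then gives $t_+(x,a)/a\to\varphi(\beta)$ and $t_-(x,a)/a\to\varphi(\omega)$ uniformly in $x\in\partial X$, so for $a$ small enough $t/a\in I_\delta$ uniformly for $x\in\partial X$ and $t\in[t_-(x,a),t_+(x,a)]$. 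Differentiating the identity $\theta_a^{H_x}(x+tn(x))=\theta^H(t/a)$ then yields
\begin{equation*}
\frac{d}{dt}\theta_a^{H_x}(x+tn(x)) \;=\; \frac{1}{a}(\theta^H)'(t/a) \;\leq\; -\frac{c}{a}
\end{equation*}
on that range.

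Next I would re-run the proof of Lemma \ref{diff} with $|\alpha|=1$ applied to $\theta_a^X$ itself rather than to $f\circ\theta_a^X$, using $\partial_i\theta_a^X=a^{-1}\mathds{1}_X*(\partial_i\rho)_a$ and the same symmetric-difference estimate for $(\mathds{1}_X-\mathds{1}_{H_x})*(\partial_i\rho)_a$. In the Condition \ref{cond2} setting this gives
\begin{equation*}
\left|\frac{d}{dt}\theta_a^X(x+tn(x)) - \frac{d}{dt}\theta_a^{H_x}(x+tn(x))\right| \;\leq\; M\,a^{\frac{s-d}{s+1}-1}
\end{equation*}
uniformly in $x\in\partial X$ and the relevant $t$. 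The arithmetic key is that $s>d$ forces $\tfrac{s-d}{s+1}-1=-\tfrac{d+1}{s+1}>-1$, so this error is $o(a^{-1})$ and hence negligible compared with $-c/a$. Combining,
\begin{equation*}
\frac{d}{dt}\theta_a^X(x+tn(x)) \;\leq\; -\frac{c}{a} + o(a^{-1}) \;<\; 0
\end{equation*}
uniformly in $x\in\partial X$ and $t\in[t_-(x,a),t_+(x,a)]$ once $a$ is small, which gives both assertions.

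The main obstacle is essentially bookkeeping: both Lemma \ref{tlemma} and the normal-derivative version of Lemma \ref{diff} must be applied uniformly in $x\in\partial X$, which rests on compactness of $\partial X$ and a uniform tubular-neighbourhood radius $r$ from the $C^3$ assumption. The delicate quantitative point is that $s>d$ in Condition \ref{cond2} is precisely what makes the discrepancy $o(a^{-1})$; any slower polynomial decay would fail to beat the $1/a$ main term, which is exactly why the corollary requires Condition \ref{cond2} rather than merely Condition \ref{cond1}.
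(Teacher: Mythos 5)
Your proof is correct and is essentially the argument the corollary is meant to follow from: the paper labels it a corollary because once Lemma \ref{tlemma} localizes $[t_-(x,a),t_+(x,a)]$ to the $a$-scaled interval $[\varphi(\omega),\varphi(\beta)]$ and the (re-run of the proof of) Lemma \ref{diff} with $|\alpha|=1$ controls the normal-derivative discrepancy, the strict negativity of $(\theta^H)'$ on a compact sub-interval of $\{\theta^H\in(0,1)\}$ wins at order $a^{-1}$. The paper supplies no proof at all, so the sequence of steps you lay out --- uniform enlargement $I_\delta$ via continuity and compactness, $\frac{d}{dt}\theta_a^{H_x}(x+tn)=a^{-1}(\theta^H)'(t/a)\leq -c/a$, discrepancy $O(a^{\frac{s-d}{s+1}-1})=o(a^{-1})$ from the symmetric-difference estimate, all uniform in $x$ --- is exactly what one should write.

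One remark at the end is off, though it does not affect the argument. You write that ``the corollary requires Condition \ref{cond2} rather than merely Condition \ref{cond1},'' suggesting Condition \ref{cond1} would be insufficient. The logic is reversed: Condition \ref{cond1} (compact support) implies Condition \ref{cond2} for every $s>d$, so the corollary stated under Condition \ref{cond2} automatically covers the compactly supported case, and under Condition \ref{cond1} the discrepancy is even smaller ($O(1)$ rather than $O(a^{\frac{s-d}{s+1}-1})$). The reason $s>d$ is delicate is that Condition \ref{cond2} is the \emph{weaker} hypothesis; the exponent must still beat the $a^{-1}$ main term, and $s>d$ is exactly the threshold. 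The corollary is stated only for Condition \ref{cond2} for generality, not because Condition \ref{cond1} fails.
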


\begin{lem}
Suppose $\rho$ satisfies Condition  \ref{cond2} and $f$ is $C^1$ on $[\beta, \omega]$. Then $g_a * g_a^-$ is continuous. 
\end{lem}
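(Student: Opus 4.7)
The plan is to reduce the lemma to the standard fact that the convolution of an $L^2$ function with itself is continuous. The only real work lies in checking that $g_a \in L^2(\R^d)$; the rest is Cauchy--Schwarz plus $L^p$-continuity of translation.

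First I would verify that $g_a$ is bounded with compact support. Boundedness is immediate: $f$ is continuous on the compact interval $[\beta,\omega]$, hence bounded there, and vanishes outside. For the support, I would invoke the choice of $D$ fixed at the start of the section. Under Condition \ref{cond2} the tail of $\rho$ outside $B(D)$ is so small that $\theta_a^X(x) > \omega$ whenever $x \in X$ lies at distance at least $aD$ from $\partial X$, and symmetrically $\theta_a^X(x) < \beta$ at exterior points of distance at least $aD$ from $\partial X$. Since $f$ vanishes outside $[\beta,\omega]$, this forces $\supp g_a \subseteq T^{aD}$, which is relatively compact once $a$ is small enough that $aD < r$. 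Hence $g_a \in L^\infty \cap L^1 \subset L^2(\R^d)$.

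Next I would rewrite the convolution as an $L^2$ inner product,
\begin{equation*}
(g_a * g_a^-)(x) = \int_{\R^d} g_a(y)\, g_a(y - x)\, dy = \langle g_a,\, \tau_x g_a \rangle_{L^2},
\end{equation*}
where $\tau_x h(y) := h(y-x)$. Cauchy--Schwarz then yields
\begin{equation*}
\bigl|(g_a * g_a^-)(x) - (g_a * g_a^-)(x_0)\bigr| \leq \|g_a\|_{L^2}\, \|\tau_x g_a - \tau_{x_0} g_a\|_{L^2},
\end{equation*}
so continuity of $g_a * g_a^-$ reduces to continuity of the map $x \mapsto \tau_x g_a$ from $\R^d$ into $L^2(\R^d)$. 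This is the standard continuity of translations in $L^p$ for $1 \leq p < \infty$, applied to $g_a \in L^2$.

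The main obstacle is really bookkeeping: the analogous argument would fail for a general $L^\infty$ function, since translation is not continuous in $L^\infty$, but compact support places $g_a$ in $L^2$, where $L^p$-translation continuity closes the argument. In particular, what the proof genuinely uses is continuity of $f$ on $[\beta,\omega]$ together with enough decay on $\rho$ to localise $\supp g_a$ inside $T^{aD}$; the stronger $C^1$ hypothesis on $f$ and the $C^2$-plus-decay bounds in Condition \ref{cond2} are presumably included for uniformity with later results that rely on this lemma, most notably the Poisson summation step in \eqref{varcomp}.
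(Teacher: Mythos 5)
Your proof is correct, but it takes a genuinely different and in fact more elementary route than the paper. The paper argues directly on the difference
\begin{equation*}
\bigl|g_a * g_a^-(x_1)-g_a * g_a^-(x_2) \bigr|
= \biggl|\int_{\R^d} g_a(z)\bigl(g_a(z+x_1)-g_a(z+x_2)\bigr)dz\biggr|,
\end{equation*}
splitting the integration region according to the discontinuity set $E=(\theta_a^X)^{-1}(\{\beta,\omega\})\subseteq T^{aD}$: near $E$ one only has the sup bound on $f$, but Corollary \ref{voks} guarantees $E$ has measure zero, so $\lambda(E\oplus B(|x_1-x_2|))\to 0$; away from $E$ the composite $g_a=f\circ\theta_a^X$ is Lipschitz with constant controlled by $\sup|f'|\sup|\nabla\theta_a^X|$, which is where the $C^1$ hypothesis on $f$ and the smoothness/decay of $\rho$ from Condition \ref{cond2} actually enter. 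Your argument bypasses all of that: once $g_a$ is seen to be bounded with support in $T^{aD}$ (which the paper already establishes in the prerequisites), $g_a\in L^2$, and the continuity of $x\mapsto\langle g_a,\tau_x g_a\rangle$ follows from Cauchy--Schwarz plus continuity of translation in $L^2$. This is cleaner, uses a textbook fact rather than the bespoke measure estimate via Corollary \ref{voks}, and as you observe correctly, it needs only measurability and compact support of $g_a$ --- the $C^1$ hypothesis on $f$ and the full force of Condition \ref{cond2} are not actually used, so your version proves the statement under strictly weaker hypotheses. The trade-off is that the paper's argument produces an explicit modulus-of-continuity bound in terms of $\lambda(E\oplus B(\cdot))$ and the Lipschitz constant, which is not needed here but is more in the spirit of the quantitative estimates elsewhere in the paper; your $L^2$ argument gives continuity qualitatively without an explicit rate.
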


\begin{proof}
The set $E$ where $g_a$ is not continuous is $(\theta_a^X)^{-1}(\{\beta,\omega\}) \subseteq T^{aD}$ which is a compact set of measure zero by Corollary \ref{voks}. 

It follows that 
\begin{align*}
\MoveEqLeft |g_a * g_a^-(x_1)-g_a * g_a^-(x_2) | =\bigg|\int_{\R^d} g_a(z)(g_a(z+x_1)-g_a(z+x_2) )dz\bigg|\\
 \leq {}& M_1 \sup |f|^2 \lambda(E \oplus B(|x_1-x_2|)) + M_2 \sup|f| \sup|f'|\sup|\nabla \theta^X_a||x_1-x_2| \lambda (T^{aD})
\end{align*}
which goes to 0 for $|x_1-x_2| \to 0$ by monotone convergence.
\end{proof}

\section{Asymptotic bound on the variance}\label{X}
In this section we shall obtain the following general bound on the variance:

\begin{thm}\label{vargeneral}
Assume $X$ is a compact manifold with $C^3$ boundary, $f$ is $C^3$ on $(0,1)$ with compact support, and $\rho$ satisfies Condition \ref{cond1}. 
Then there is a constant $M>0$ depending only on $X$ and $\La$ such that for all $a$ and $b$ small and $R$ large
\begin{equation}\label{ulig}
\Var(\hat{S}(f)^{a,b}(R X)) \leq  a^{-1}b^d R^{d-1} M\frac{\alpha_{|f|}}{\alpha_f^2}\int |(f\circ \theta^H)'(t)|dt + O\Big(a^{-\frac{1}{2}}b^d R^{d-\frac{3}{2}}\Big) 
\end{equation}
where the $O$-term is allowed to depend on $\rho$ and $f$. 

If $\rho $ satisfies Condition \ref{cond2} with $s>2d+1$, \eqref{ulig} holds with $O(a^{-k}b^{d}R^{d+k-2})$ on the right hand side for $ 2- 2\frac{s-d}{s+1} <  k < 1$.
\end{thm}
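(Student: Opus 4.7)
The starting point is the variance formula \eqref{formelvar}, which reduces the task to bounding, for each $\xi \in \La^* \setminus\{0\}$, the spherical integral
\begin{equation*}
I_\xi := \int_{S^{d-1}}\bigl|\mathcal{F}(f\circ\theta_a^{RX})(b^{-1}|\xi|u)\bigr|^2\, du,
\end{equation*}
and then summing. My first step would be to isolate the per-frequency estimate as a key approximation lemma for $\mathcal{F}(f\circ\theta_a^{RX})(\rho u)$, which I expect will also underlie the more refined results of later sections.

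For that lemma, I parametrise the support of $f\circ\theta_a^{RX}$, contained in the tubular neighbourhood $T^{aD}$ of $\partial(RX)$, by $y = x + tn(x)$ with $x \in \partial(RX)$ and $|t| \leq aD$. Lemma \ref{diff} gives
\begin{equation*}
(f\circ\theta_a^{RX})(x+tn(x)) = (f\circ\theta^H)(t/a) + O(a)
\end{equation*}
uniformly (with error $O(a^{(s-d)/(s+1)})$ under Condition \ref{cond2}), while the Jacobian of the tubular parametrisation equals $1 + O(a)$ since $|t| \leq aD$. Substituting into the Fourier integral separates the $t$-integration and yields
\begin{equation*}
\mathcal{F}(f\circ\theta_a^{RX})(\rho u) = a\int_{\partial(RX)} \mathcal{F}(f\circ\theta^H)(a\rho\,u\cdot n(x))\, e^{-2\pi i\rho u\cdot x}\, d\sigma(x) + E,
\end{equation*}
where $\mathcal{F}(f\circ\theta^H)$ is one-dimensional and $E$ collects the error terms. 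For the remaining oscillatory integral over $\partial(RX)$, I would apply the stationary-phase type estimate used in \cite{brandolini}: its critical points are the two Gauss preimages $x(\pm u)$ with $n(x(\pm u)) = \pm u$, giving a pointwise bound in $u$ of order $a^2 R^{d-1}\rho^{-(d-1)}\bigl|\mathcal{F}(f\circ\theta^H)(a\rho)\bigr|^2$.

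To sum over $\xi$, I combine this with the two elementary 1D bounds $|\mathcal{F}(f\circ\theta^H)(s)| \leq \alpha_{|f|}$ and, by integration by parts, $|\mathcal{F}(f\circ\theta^H)(s)| \leq (2\pi|s|)^{-1}\int|(f\circ\theta^H)'(t)|dt$. I split $\sum_\xi I_\xi$ at a threshold chosen to balance the two bounds, roughly $|\xi| \sim b\int|(f\circ\theta^H)'|/(a\alpha_{|f|})$: the low-frequency piece uses the $\alpha_{|f|}$ bound together with $\sum_{|\xi|\leq R_0}|\xi|^{-(d-1)} = O(R_0)$, while the high-frequency piece uses the $|s|^{-1}$ bound together with $\sum_{|\xi|>R_0}|\xi|^{-(d+1)} = O(R_0^{-1})$; both contributions come out of order $ab^d R^{d-1}\alpha_{|f|}\int|(f\circ\theta^H)'|$, and division by $(a\alpha_f)^2$ in \eqref{formelvar} produces the main term of \eqref{ulig}. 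The error $E$ in each $I_\xi$, arising from Lemma \ref{diff}, the tubular Jacobian, and the stationary-phase remainder, assembles after summation into the stated $O(a^{-1/2}b^d R^{d-3/2})$ correction; under Condition \ref{cond2} with $s > 2d+1$ the weaker $O(a^{(s-d)/(s+1)})$ error in Lemma \ref{diff} propagates and produces the $O(a^{-k}b^d R^{d+k-2})$ bound in the allowed range. The main technical obstacle is executing the stationary-phase expansion on $\partial(RX)$ uniformly in $u \in S^{d-1}$ when the amplitude $\mathcal{F}(f\circ\theta^H)(a\rho\,u\cdot n(x))$ itself depends on $u$ through $n(x)$, and tracking the $R$-dependence so that the Hessian factor $R^{(d-1)/2}$ at the stationary points combines with the surface-area scaling $S(RX)=R^{d-1}S(X)$ to produce exactly the exponents in the stated bound.
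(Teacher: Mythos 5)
Your high-level architecture is right: you correctly recognize that the proof should reduce to a per-frequency lemma bounding $\int_{S^{d-1}}|\mathcal{F}(g_a)(Ru)|^2\,du$ (this is exactly the role of Lemma \ref{generalF} in the paper), that the two natural one-dimensional bounds $|\mathcal{F}(f\circ\theta^H)(s)|\le\alpha_{|f|}$ and $|\mathcal{F}(f\circ\theta^H)(s)|\le(2\pi|s|)^{-1}\int|(f\circ\theta^H)'|$ drive the two branches of that lemma, and that the lattice sum should be split at the threshold where these cross. That part matches the paper's proof.

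The gap is in how you propose to estimate the boundary oscillatory integral. You want to apply stationary phase on $\partial(RX)$, locating ``the two Gauss preimages $x(\pm u)$'' as critical points and reading off a pointwise-in-$u$ decay of order $R^{-(d-1)}$. This cannot work under the hypotheses of Theorem \ref{vargeneral}: $X$ is only assumed to be a compact manifold with $C^3$ boundary, so the Gauss map need not be a bijection (there may be zero, many, or a continuum of critical points in a given direction), and wherever the Gaussian curvature vanishes the critical points are degenerate and the pointwise decay is strictly worse than $R^{-(d-1)/2}$. Moreover, \cite{brandolini} is not a stationary-phase paper at all; its whole point is to obtain the $R^{-(d-1)}$ decay only after averaging over $u\in S^{d-1}$, precisely so as to dispense with curvature assumptions. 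That is the route the paper's Lemma \ref{generalF} actually takes: a covering of $\partial X$ by patches $X_j$ with a reference direction $\xi_j$, a splitting of $S^{d-1}$ into spherical caps around $\pm\xi_j$ and the complementary belt, $d$-fold partial integration in the radial variable for the caps, and difference operators plus Hardy--Littlewood maximal-function estimates for the belt, with Lemma \ref{diff} controlling $\nabla g_a$ by $a^{-1}(f\circ\theta^H)'$. A genuine stationary-phase expansion does appear in the paper, but only later (Propositions \ref{propo} and \ref{Fourierdet}), in the setting of Theorems \ref{maindet} and \ref{randomdet}, where $X$ is additionally assumed smooth, compact, convex, with nowhere vanishing Gaussian curvature; your proposal in effect imports those stronger hypotheses into a theorem that does not have them. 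As a secondary point, your error-term bookkeeping (assembling the $O(a^{-1/2}b^dR^{d-3/2})$ remainder from the tubular Jacobian, Lemma \ref{diff}, and the phase expansion) is asserted rather than argued, and under your pointwise approach it is not clear the errors would even be summable over $\xi$; in the paper this follows because the averaged lemma already delivers $O(aR^{-d-1/2})$ and $O(a^3R^{-d+3/2})$ remainders with explicit $R$-decay.
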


The proof will follow from the following lemma:
\begin{lem}\label{generalF}
Assume that $X$ is $C^3$, $f$ is $C^3$ on $(0,1)$, and $\rho $ satisfies Condition \ref{cond1}. Then there are constants $M_1,M_2>0$ depending only on $X$ such that for all $R$ large and $a$ small,
\begin{align}\label{Fbound}
\int_{S^{d-1}}|\mathcal{F}(f\circ \theta_a^X)(Ru)|^2 du \leq \begin{dcases}
M_1  R^{-d-1} \bigg(\int|(f\circ \theta^H )' (t)|dt\bigg)^2+O\big( a R^{-d-\frac{1}{2}}\big)\\
M_2 a^2 R^{-d+1} \bigg(\int |f\circ \theta^H (t)| dt\bigg)^2  + O\big( a^3 R^{-d+\frac{3}{2}}\big).
\end{dcases}
\end{align}
%
%
%
%

If $\rho $ only satisfies Condition \ref{cond2}, \eqref{Fbound} holds but with $O( a^{-2+(\frac{s-d}{s+1}+1)\eps}R^{-d-2+\eps})$ in the first inequality and $O( a^{(\frac{s-d}{s+1}+1)\eps}R^{-d+\eps})$ in the second inequality where $\frac{2(s+1)}{2s-d+1} <  \eps < 2$.
\end{lem}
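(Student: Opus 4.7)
The plan is to analyze $\mathcal{F}(g_a)(Ru)$ by pulling the integral onto the boundary $\partial X$ via tubular coordinates and exploiting two complementary bounds for the one-dimensional profile $f\circ\theta^H$. Since $\supp g_a \subseteq T^{aD}$ for $a$ small, the change of variables $y = x + tn(x)$ with $(x,t)\in \partial X \times [-aD,aD]$ has Jacobian $J(x,t)=1+O(t)=1+O(a)$. Applying Lemma \ref{diff} with $|\alpha|=0$ to replace $g_a(x+tn(x))$ by $(f\circ\theta^H)(t/a)$ up to a pointwise error of order $a$ (respectively $a^{(s-d)/(s+1)}$ under Condition \ref{cond2}), and substituting $s=t/a$, one arrives at the representation
\begin{equation*}
\mathcal{F}(g_a)(Ru) = a\int_{\partial X} \mathcal{F}(f\circ\theta^H)\big(aR\langle u,n(x)\rangle\big)\, e^{-2\pi i R u\cdot x}\,d\mathcal{H}^{d-1}(x) + E_a(R,u),
\end{equation*}
where $E_a(R,u)$ is the Fourier transform at $Ru$ of a function supported in $T^{aD}$ of $L^\infty$-norm $O(a)$, absorbing both the pointwise error from Lemma \ref{diff} and the Jacobian expansion.

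The two inequalities in \eqref{Fbound} then follow from two complementary bounds on $\mathcal{F}(f\circ\theta^H)$. Since $f\circ\theta^H$ is $C^3$ with compact support on $\R$, one has the trivial bound $|\mathcal{F}(f\circ\theta^H)(\eta)|\leq\|f\circ\theta^H\|_{L^1}$ and, after one integration by parts, $|\mathcal{F}(f\circ\theta^H)(\eta)|\leq (2\pi|\eta|)^{-1}\|(f\circ\theta^H)'\|_{L^1}$. Inserting the trivial bound with $\eta=aR\langle u,n(x)\rangle$ retains the prefactor $a$ and leaves an unweighted surface oscillatory integral, while inserting the integration-by-parts bound cancels $a$ against $(aR\langle u,n(x)\rangle)^{-1}$, producing an overall factor $R^{-1}$ together with the weight $|\langle u,n(x)\rangle|^{-1}$.

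It therefore suffices to establish a spherical $L^2$ estimate of the form
\begin{equation*}
\int_{S^{d-1}} \Big|\int_{\partial X} W(x,u)\, e^{-2\pi i Ru\cdot x}\,d\mathcal{H}^{d-1}(x)\Big|^2 du \leq C_X R^{-(d-1)}
\end{equation*}
in two cases: for $W\equiv 1$, which gives the main term $M_2 a^2 R^{-d+1}$ of the second inequality; and (with appropriate cut-off) for $W(x,u)=|\langle u,n(x)\rangle|^{-1}$, which combined with the $R^{-2}$ gained from integration by parts gives the main term $M_1 R^{-d-1}$ of the first inequality. The unweighted case is the classical $L^2$ Fourier decay of the surface measure on a $C^3$ compact hypersurface: expanding the square, applying Fubini, and using the Bessel bound $\big|\int_{S^{d-1}} e^{-2\pi i R u\cdot v}du\big|\leq C(R|v|)^{-(d-1)/2}$ reduce it to a geometric double integral on $\partial X\times\partial X$, and the sharpening due to \cite{brandolini} then yields the exponent $-(d-1)$.

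The main obstacle is the weighted estimate, because $W(x,u)=|\langle u,n(x)\rangle|^{-1}$ is singular on the grazing locus $\{\langle u,n(x)\rangle=0\}$, which is precisely where the normal-direction integration by parts fails. The remedy is to interpolate the two Fourier bounds for $\mathcal{F}(f\circ\theta^H)$ at scale $|\langle u,n(x)\rangle|\sim(aR)^{-1/2}$, using the trivial bound on the grazing strip of surface measure $\sim(aR)^{-1/2}$ and the integration-by-parts bound outside it. This Cauchy--Schwarz-type interpolation at scale $R^{-1/2}$ is what produces the $R^{-1/2}$ gap between the main terms and the $O(aR^{-d-1/2})$, $O(a^3R^{-d+3/2})$ remainders in \eqref{Fbound}. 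For the rougher Condition \ref{cond2}, one additionally truncates the non-compactly supported $\rho$ at radius $a^{-s/(s+1)}$, as in the proof of Lemma \ref{diff}, and optimizes over the truncation parameter $\eps\in(\tfrac{2(s+1)}{2s-d+1},2)$ to obtain the weaker remainders stated there.
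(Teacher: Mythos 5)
Your decomposition of $\mathcal{F}(g_a)(Ru)$ into the main term
\begin{equation*}
a\int_{\partial X}\mathcal{F}(f\circ\theta^H)\big(aR\,n(x)\cdot u\big)e^{-2\pi i Ru\cdot x}\,d\sigma(x)
\end{equation*}
plus an error $E_a$ is essentially the content of the paper's Corollary~\ref{2ndencor}, which the paper derives as a \emph{consequence} of the argument proving Lemma~\ref{generalF}, not as its starting point, and this ordering matters: the approximation does not make the decay bounds fall out, it merely repackages them. The first concrete gap is the error term. You observe that $E_a$ is the Fourier transform of a function supported on $T^{aD}$ with $L^\infty$-norm $O(a)$ and then never return to it; an $L^\infty$ bound gives no decay in $R$, so $\int_{S^{d-1}}|E_a(Ru)|^2\,du$ and the cross term cannot be absorbed into the $O(aR^{-d-1/2})$ (or $O(a^3R^{-d+3/2})$) remainders without a further oscillatory-integral argument, which is precisely what the lemma is supposed to establish. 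Moreover, by Lemma~\ref{diff} the gradient of the error function is only $O(1)$, not $O(a)$, so you cannot recursively feed $E_a$ into the same two Fourier bounds and hope for a smaller contribution.

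The second, deeper gap is the weighted surface estimate. You say that "inserting the trivial bound ... leaves an unweighted surface oscillatory integral," but this is not correct: bounding $|\mathcal{F}(f\circ\theta^H)(aRn(x)\cdot u)|$ by its $L^1$ norm does not let you pull it outside the $x$-integral; taking the modulus inside destroys exactly the cancellation that produces the $R^{-(d-1)}$ decay. What you actually need is an $L^2(S^{d-1})$ decay estimate for the \emph{weighted} surface Fourier integral with amplitude $W(x,u)=\mathcal{F}(f\circ\theta^H)(aRn(x)\cdot u)$, and the constant in such an estimate depends on the derivatives of $W$ in $x$ and $u$, which carry factors of $aR$ and become singular like $|n(x)\cdot u|^{-1}$ on the grazing locus. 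Controlling those factors is exactly what the paper's Brandolini-style argument does: the covering $\{X_j\}$ with directions $\xi_j$, the $\psi$-partition of $S^{d-1}$ into caps around $\pm\xi_j$ and the equatorial band, the $d$-fold radial integration by parts in the spherical $s$-coordinate on the caps, and the graph-plus-difference-operator argument on the band. Your proposed interpolation at scale $(aR)^{-1/2}$ points in the right direction for handling the grazing strip, but as stated it yields no $L^2(S^{d-1})$ bound with the required exponents because the tangential cancellation is not exhibited. In short, the decomposition is a clean reorganization of the problem, but the core technical work — the weighted spherical $L^2$ estimate and an $R$-decaying bound on $E_a$ — is still missing, and that work is the substance of the paper's proof.
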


The proof essentially follows \cite{brandolini}.
First note that partial integration in the $u$-coordinate in the inner integral yields:
\begin{align}\nonumber
\int_{S^{d-1}}|\mathcal{F}(f\circ \theta_a^X)(Ru)|^2 du{}& = \int_{S^{d-1}}\bigg|\int g_a(x)e^{-2\pi i Rx \cdot u} dx\bigg|^2 du\\
&=\frac{1}{(2\pi R)^2} \int_{S^{d-1}}\bigg|\int \nabla_u g_a(x) e^{2\pi i R x \cdot u} dx\bigg|^2 du.\label{intpar}
\end{align}
Here $\nabla_u f(x)$ denotes the directional derivative of $f$ in direction $u$ evaluated at $x$. This will also sometimes be written $\nabla_u f(x)=\nabla f (x) \cdot u$ where $\nabla f$ is the gradient. The two viewpoints on the integral give rise to the two inequalities.

In \cite{brandolini}, the Fourier integral for $\mathds{1}_X$ is converted to a boundary integral via the divergence theorem. As $f\circ \theta_a^X$ does not live on $X$ but on a neighbourhood of the boundary, we apply instead the Weyl tube formula \cite{weyl}. This states that for a compact manifold $X$ with $C^2$ boundary and a bounded measurable function $g$ living on $T^r$,
\begin{equation*}
\int_{\R^d} g(x) dx = \sum_{m=0}^{d-1} \int_{\partial X} \int_{-r}^{r}t^m g(x+tn) s_m(x) dt \sigma (dx)
\end{equation*}
where $s_m$ is the $m$'th symmetric polynomial in the principal curvatures and $\sigma$ is the surface area measure on $\partial X$. Note that  $s_m$ is $C^1$ under the $C^3$ assumption. 

\begin{proof}
We focus on the case of Condition \ref{cond1}. The second case is similar, only the bounds are slightly different.

As in \cite{brandolini}, choose a covering of $\partial X$ by open sets $X_j$ for which there is a $\xi_j \in S^{d-1}$ so that for all $x,y\in X_j$, the angle between $x-y$ and $ \xi_j $ is at least $\frac{7\pi}{16}$. Choose a smooth partition of unity subordinate to this covering and extend radially to a smooth partition of unity $\{\varphi_j\}$ on $T^r$ by composing with $\xi_{\partial X}$. Hence it is enough to show that for all $m$ and $j$ 
\begin{align*}
&\int_{S^{d-1}}\bigg| \int_{\partial X_j} \int_{-r}^{r} t^m g_a(x+tn)e^{2\pi iR (x+tn) \cdot u} s_m(x) \varphi_j(x) dt \sigma (dx)\bigg|^2du \\
&\quad \leq M_1 a^2 R^{-d+1} \bigg(\int |f\circ \theta^H(dt)|dt\bigg)^2  + O(a^3R^{-d+\frac{3}{2}})\\
&\int_{S^{d-1}}\bigg|\int_{\partial X_j} \int_{-r}^{r} t^m \nabla_u g_a(x+tn) e^{2\pi iR (x+tn) \cdot u} s_m(x) \varphi_j(x)  dt \sigma(dx) \bigg|^2 du \\
&\quad \leq M_2  R^{-d+1} \bigg(\int |(f\circ \theta^H)'(t)|dt\bigg)^2+ O(aR^{-d+\frac{3}{2}}).
\end{align*}
The proofs are essentially the same, hence we shall only give the arguments below for the second, slightly more complicated, inequality. Observe that by linearity of $u\mapsto \nabla_u g_a(x+tn)$, the Minkowski inequality allows us to replace $\nabla_u g_a(x+tn)$ by $\nabla_{u_0} g_a(x+tn)$ for some fixed $u_0$.

By rotating the picture, we may assume that $\xi_j=e_d$ is the $d$th standard basis vector.
Let $\psi $ be a smooth function on $S^{d-1}$ which is 1 on the spherical caps $|\langle e_d, u \rangle|\geq \cos(\frac{\pi}{4})$ and supported on the slightly larger caps $|\langle e_d, u \rangle|\geq  \cos (\frac{3\pi}{8})$.

Consider first
\begin{align*}
\int_{S^{d-1}}\bigg|\int_{\partial X_j} \int_{-r}^{r} t^m \nabla_{u_0} g_a(x+tn)e^{2\pi iR (x+tn) \cdot u} s_m(x) \varphi_j (x) \psi(u) dt \sigma(dx) \bigg|^2 du .
\end{align*}
By the Minkowski integral inequality, this is bounded by
\begin{align*}
\MoveEqLeft \bigg( \int_{-aD}^{aD} \bigg(\int_{S^{d-1}}\bigg|\int_{\partial X_j}  t^m \nabla_{u_0} g_a(x+tn)e^{2\pi i R(x+tn) \cdot u}  \psi (u)  \mu_{m,j} (dx) \bigg|^2 du \bigg)^{\frac{1}{2}}dt\bigg)^2\\
& \leq (aD)^m \bigg(
\int_{-aD}^{aD}  \bigg(\int_{S^{d-1}}\int_{\partial X_j} \int_{\partial X_j}  \nabla_{u_0} g_a(x+tn) \nabla_{u_0} g_a(y+tn) \\
&\quad \times e^{2\pi i R(x-y+t(n(x)-n(y))) \cdot u}  \psi (u)  \mu_{m,j}(dx) \mu_{m,j} (dy)du \bigg)^{\frac{1}{2}}dt\bigg)^2
\end{align*}
where $\mu_{m,j} $ is short for $\varphi_j s_m \sigma$. 

Given $x\neq y \in X_j$, write $w=\frac{x-y+t(n(x)-n(y))}{|x-y+t(n(x)-n(y))|}$. Identifying $S^{d-2}$ with $w^\perp \cap S^{d-1}$, parametrize $S^{d-1}$ by $u: [-1,1] \times S^{d-2}\ \to S^{d-1}$ where $u(s,v) = s w + \sqrt{1-s^2}v$.
This has smooth Jacobian determinant $J(s,v)$ away from $s=\pm 1$. 

Since $n$ is $C^2$, there is a $C>0$ such that $|n(x)-n(y)|\leq C|x-y|$ for all $x,y\in X_j$. Thus for $|t|\leq aD$ and $a$ sufficiently small, 
\begin{equation}\label{vurdering}
|x-y+t(n(x)-n(y))|\geq (1-aDC)|x-y| \geq (1-\delta)|x-y|.
\end{equation}
For $u \in \supp \psi $ and $\delta$ chosen sufficiently small, it follows that there is a $\delta' >0$ such that
\begin{equation*}
|s|= |w \cdot u| \leq (1-\delta)^{-1}|\cos(\tfrac{\pi}{16}) + aDC | \leq  1-\delta'.
\end{equation*}
for $a$ sufficiently small independently of $x,y$. Hence partial integration may be applied  $d$ times in the $s$-coordinate for $a$ sufficiently small:
\begin{align}\nonumber
\MoveEqLeft\int_{-aD}^{aD}  \bigg(\int_{\partial X_j} \int_{\partial X_j}  \int_{S^{d-2}} \int_{-1+\delta}^{1-\delta}  e^{2\pi iR |x-y+t(n(x)-n(y))| s } J(s,v) \psi (u(s,v)) ds dv \\ \nonumber
& \quad \times \nabla_{u_0} g_a(y+tn) \nabla_{u_0} g_a(x+tn)\mu_{m,j}(dx) \mu_{m,j} (dy)\bigg)^{\frac{1}{2}}dt\\ \label{etintegral}
& \leq M_1\int_{-aD}^{aD} \sup |\nabla  g_a |(t)   \\
& \quad \times \bigg( \int_{\partial X_j} \int_{\partial X_j} (( R |x-y+t(n(x)-n(y))|)^{-d} \wedge 1 ) |\mu_{m,j}|(dx) |\mu_{m,j}| (dy) \bigg)^{\frac{1}{2}} dt\nonumber
\end{align}
where $\sup |\nabla g_a | (t)=\sup \{ |\nabla g_a (x+tn) | \mid x\in X_j \}$.

Using \eqref{vurdering} again, the fact that $|\mu_{m,j}| \leq c \sigma $ and has compact support, and a parametrization of $X_j$ as a graph over a hyperplane, shows that \eqref{etintegral}  is bounded by
\begin{align*}
\MoveEqLeft M_2\int_{-aD}^{aD}  \sup |\nabla  g_a |(t) \bigg(\int_{\partial X_j} \int_{\partial X_j} (( R |x-y|)^{-d} \wedge 1 ) \sigma (dx) \sigma (dy) \bigg)^{\frac{1}{2}}dt\\
& \leq  M_3 R^{\frac{d-1}{2}}a \int_{-D}^{D}  \sup |\nabla  g_a |(at)  dt.
\end{align*}
Finally, there is a constant $M$ such that $| \nabla g_a (x+  atn)- a^{-1}\nabla (f\circ \theta^H)(t)|\leq M$ uniformly in $t$ and $x$ by Lemma \ref{diff}. This yields the required bound.

It remains to bound
\begin{align*}
\bigg(\int_{-aD}^{aD} \bigg(\int_{S^{d-1}}\bigg|\int_{\partial X_j} \nabla_{u_0} g_a(x+tn)e^{2\pi i R (x+tn) \cdot u} (1- \psi (u))  \mu_{m,j}(dx) \bigg|^2 du \bigg)^{\frac{1}{2}}dt \bigg)^2.
\end{align*}
Cover the support of $1- \psi(u)$ by coordinate neighborhoods that are rotations of the following:  $(\theta,v) \in (-\frac{3\pi}{8},\frac{3\pi}{8} )\times S^{d-2}$ corresponds to $Q_\theta(v)$ where $v\in e_d^\perp \cap S^{d-1}$ and $Q_\theta$ is rotation by the angle $\theta$ in the $\{e_1,e_d\}$-plane. This has smooth Jacobian determinant $J_1(\theta,v)$. We take the cap with the halfspace $\langle e_1 , x \rangle \geq \eps_1 >0$. Choosing a smooth partition of unity $\eta_l(\theta, v)$ with respect to this covering and the caps $|\langle u,e_d\rangle| \geq \cos( \frac{\pi}{4})$ and rotating the picture again, we may assume this is exactly the coordinate system.

For $t$ fixed, let $X_j^t= \{x+tn(x)\mid x\in X_j \}$. Note that this is a $C^2$ surface with normal vector $n(x)$ at $x+tn(x)$. Hence it can locally be written as the graph of $\tilde{A}_\theta^t : Q_\theta e_d^\perp \to X_j^t$ over $Q_\theta e_d^\perp$.

By the inverse function theorem, the family $A_\theta^t(y) =\tilde{A}_\theta^t(Q_{\theta}y)$ for $(y,t,\theta) \in e_d^\perp \times [-r,r] \times (-\frac{3\pi}{4},\frac{3\pi}{4})$ defines a $C^2$ local diffeomorphism onto $T^r \times (-\frac{3\pi}{4},\frac{3\pi}{4})$. 
The assumption 
\begin{equation*}
 (x-y) \cdot Q_\theta e_d < \cos(\tfrac{\pi}{16})|x-y|<|x-y|
\end{equation*}
 for all $x,y \in X_j$ ensures injectivity on $e_d^\perp \times \{0\} \times (-\frac{3\pi}{4},\frac{3\pi}{4})$ and hence by compactness there is a small $r$ such that for every $(\theta,v) \in \supp (1-\psi) $, $T^r$ is globally parametrized by $(y,t) \mapsto A_\theta^t(y)$.


For $(\theta,t)$ fixed, we thus assume $X_j^t $ parametrized by $A_\theta^t(y)$ where $y\in \R^{d-1} \cong e_d^\perp$. By the above, the determinant $J_\theta^t(y)$ of the Jacobian  of $A_\theta^t$ is $C^1$ in $(y,\theta, t)$. Thus our integral becomes
\begin{align*}
 \bigg(\int_{-aD}^{aD}  \bigg(\int_{-\frac{3\pi}{4}}^{\frac{3\pi}{4}}\int_{S^{d-2}}\bigg|\int_{\R^{d-1}} \nabla_{u_0} g_a(A_t^\theta(y))e^{2\pi i R  y \cdot v } J_\theta^t(y) s_m(\xi_{\partial X}(A_\theta^t(y)))\\
 \varphi_j(A_\theta^t(y)) dy \bigg|^2  J_1(\theta,v)^2 \eta_l(\theta,v)^2 (1- \psi(\theta,v))^2  dv d\theta\bigg)^{\frac{1}{2}} dt\bigg)^2.
\end{align*}

Let 
\begin{equation*}
B_{\theta}^t (y)= J_\theta^t(y) s_m(\xi_{\partial X}(A_\theta^t(y))) \varphi_j(A_\theta^t(y))
\end{equation*}
and observe that this is differentiable in $y$ with derivative continuous in $(t,\theta,u)$ for $t$ small enough.

Write $y=(y_1,y')$ and $v=(v_1,v')$. As in \cite{brandolini} we introduce the difference operator in the first variable
\begin{equation*}
\Delta^1_h f(y) = f(y_1+h,y') - f(y)
\end{equation*}
and observe that $e^{2\pi i R  y \cdot v }=(e^{ i v_1} -1)^{-1}\Delta_{\frac{1}{2\pi R}}^1 e^{2\pi i R y \cdot v }$.
Hence our integral becomes 
\begin{align*}
\MoveEqLeft \bigg( \int_{-aD}^{aD} \bigg(\int_{-\frac{3\pi}{4}}^{\frac{3\pi}{4}}\int_{S^{d-2}}\bigg|\int_\R \frac{e^{2\pi iRy_1v_1}}{e^{ i v_1} -1}\int_{\R^{d-2}} \Delta_{\frac{-1}{2\pi R}}^1( \nabla_{u_0}  g_a(A_\theta^t(y)) B^t_\theta (y)) e^{2\pi i R y' \cdot v' }  dy'dy_1 \bigg|^2 \\
& \times J_1(\theta,v)^2 \eta_l(\theta,v)^2 (1- \psi(\theta,v))^2   dv d\theta\bigg)^{\frac{1}{2}} dt\bigg)^2.
\end{align*}
Since $v_1 > \eps_1 $ by assumption, the Minkowski integral inequality shows that this integral is bounded by
\begin{align*}
\MoveEqLeft M_1 \bigg( \int_{-aD}^{aD} \int_\R \bigg(\int_{-\frac{3\pi}{4}}^{\frac{3\pi}{4}}\int_{S^{d-2}}\bigg|\int_{\R^{d-2}} \Delta_{\frac{-1}{2\pi R}}^1(  \nabla_{u_0}g_a(A_\theta^t(y)) B^t_{\theta} (y)) e^{2\pi i R y' \cdot v' }  dy' \bigg|^2  \\
& \times J_1(\theta,v)^2 \eta_l(\theta,v)^2 (1- \psi (\theta,v))^2  dv d\theta\bigg)^{\frac{1}{2}} dy_1  dt\bigg)^2.
\end{align*}

The argument now follows \cite{brandolini}: Partial integration in $v'$ shows that up to a constant, the integral inside the square root is bounded by
\begin{align*}
\MoveEqLeft \int_{-\frac{3\pi}{4}}^{\frac{3\pi}{4}}\int_{S^{d-2}}\int_{\R^{d-2}} \int_{\R^{d-2}}((R|y'-z'|)^{-d+1}\wedge 1) |\Delta_{\frac{-1}{2\pi R}}^1(  \nabla_{u_0}g_a(A_\theta^t(y_1,y')) B_{\theta}^t (y_1,y')) |\\
&\times |\Delta_{\frac{-1}{2\pi R}}^1(  \nabla_{u_0}g_a(A_\theta^t(y_1,z')) B_{\theta}^t (y_1,z')) | dy' dz'  dv d\theta\\
\leq {}& M_2R^{d-2}\int_{-\frac{3\pi}{4}}^{\frac{3\pi}{4}}\int_{\R^{d-2}} \mathcal{M}'(|\Delta_{\frac{-1}{2\pi R}}^1(  \nabla_{u_0}g_a(A_\theta^t(y)) B_{\theta}^t (y)) |) \\&\times |\Delta_{\frac{-1}{2\pi R}}^1(  \nabla_{u_0}g_a(A_\theta^t(y)) B_{\theta}^t (y)) | dy'  d\theta
\end{align*}
where $\mathcal{M}'$ is the Hardy-Littlewood maximum and the inequality follows from \cite[3.2, Thm. 2]{stein2}. By the Cauchy-Schwarz inequality and boundedness of $\mathcal{M}'$ on $L^2$ \cite[1.1, Thm. 1]{stein2}, the full integral is bounded by 
\begin{align*}
\MoveEqLeft M_3 R^{-d+2} \bigg( \int_{-aD}^{aD}  \int_\R \bigg(\int_{-\frac{3\pi}{4}}^{\frac{3\pi}{4}}\int_{\R^{d-2}} |\Delta_{\frac{-1}{2\pi R}}^1 (  \nabla_{u_0}g_a(A_\theta^t(y)) B_{\theta}^t (y))|^2   dy' d\theta\bigg)^{\frac{1}{2}}dy_1  dt\bigg)^2.
\end{align*}

Finally, to bound the integrand, recall that it is compactly supported in $y$ and
\begin{equation*}
\Delta_{h}^1 (fg)(y) = f(y) \Delta_{h}^1 g(y) + g(\Delta_{h}^1(y)) \Delta_{h}^1 f(y).
\end{equation*}
%
We have that
\begin{align*}
|\nabla_{u_0}g_a(A^t_\theta(y)) \Delta_{\frac{-1}{2\pi R}}^1 (B_{\theta}^t (A_\theta^t(y)))| &\leq M_4 \sup_{y\in X_j^t} |\nabla g_a(A_\theta^t(y)) | R^{-1},\\
 |\sup_{y\in X_j^t} |\nabla g_a(A_\theta^t(y)) | - a^{-1} |(f\circ \theta^H)'(a^{-1}t)||&\leq  M_5.
\end{align*}
Here $M_5$ is uniform but may depend on $\rho $ and $f$ by Lemma \ref{diff}. 

Moreover, 
\begin{align*}
|\Delta_{\frac{-1}{2\pi R}}^1 \nabla_{u_0} g_a(A_\theta^t(y))| \leq \begin{cases} M_6  a^{-2} R^{-1}\\
M_7 + M_8 a^{-1}R^{-1}| (f\circ \theta^H )'(a^{-1}t)|)\\
\end{cases}
\end{align*}
where $M_6$ and $M_7$ may depend on $f$ and $\rho$.
The first inequality uses the mean value theorem and  Lemma \ref{diff}. The second uses the fact that 
\begin{equation*}
 |\nabla_{u_0} g_a(x+tn)- \nabla_{u_0} f\circ \theta^{H_x}_a(x+tn)|
\end{equation*}
 is bounded and 
\begin{align*}
|\nabla_{u_0} f\circ \theta_a^{H_x}(x+tn)-\nabla_{u_0} f\circ \theta_a^{H_y} (y+tn)|{}&\leq a^{-1}| (f\circ \theta^H )'(a^{-1}t)||n(x)-n(y)| \\
&\leq a^{-1}C | (f\circ \theta^H )'(a^{-1}t)| |x-y |\\
& \leq  a^{-1} C| (f\circ \theta^H )'(a^{-1}t)|({R}^{-1} + 2aD).
\end{align*}
 Hence
\begin{align}\label{estimat}
|\Delta_{\frac{-1}{2\pi R}}^1 \nabla_{u_0} g_a(A_\theta^t(y))|^2 \leq M_9 a^{-2}R^{-2}|(f\circ \theta^H )'(a^{-1}t)|^2+  M_{10} a^{-1} R^{-\frac{1}{2}}.
\end{align} 
The latter yields the $O(aR^{-d-\frac{1}{2}})$ term.

The case of $g_a$ is exactly similar, only the last step is slightly simpler because $x \mapsto f\circ \theta_a^{H_x} (x+tn)$ is constant.
The case when $\rho $ satisfies Condition \ref{cond2} is also similar, now using $x^2=x^{2-\eps}x^{\eps}$ in \eqref{estimat}.
\end{proof}

\begin{proof}[Proof of Theorem \ref{vargeneral}]
Observe first that $\mathcal{F}(f\circ \theta_{a}^{RX})(\xi)=R^{d}\mathcal{F}(f\circ \theta_{aR^{-1}}^{X})(R\xi)$.

It follows from the second estimate in Lemma \ref{generalF} that the Poisson summation formula applies. 
Using both parts of \eqref{Fbound} and the estimates
\begin{align*}
\sum_{\substack{\xi \in \La^*\backslash \{0\},\\ |\xi| < C}} |\xi|^{-d+1}\leq K_1^\La   \int_{|x|<C} |x|^{-d+1} dx {}&=  M_1^\La C,\\
\sum_{\substack{\xi \in \La^*\backslash \{0\},\\ |\xi| \geq C}} |\xi|^{-d-1}\leq K_2^\La \int_{|x| \geq C} |x|^{-d-1} dx {}&=  M_2^\La  C^{-1},
\end{align*}
we obtain
\begin{align*}
& (a \alpha_f)^{-2}  \sum_{\xi \in b^{-1}\La^*\backslash \{0\}} \int_{S^{d-1}}|\mathcal{F} (g_a)(|\xi|u) |^2 du\\
& =(a \alpha_f)^{-2} \bigg( \sum_{\substack{\xi \in b^{-1}\La^*\backslash \{0\},\\ |\xi| < Ca^{-1}b}} \int_{S^{d-1}}|\mathcal{F} (g_a)(|\xi|u) |^2 du+   \sum_{\substack{\xi \in b^{-1}\La^*\backslash \{0\},\\ |\xi|\geq Ca^{-1}b}} \int_{S^{d-1}}|\mathcal{F} (g_a)(|\xi|u) |^2du\bigg)\\
&\leq  a^{-1} b^{d} R^{d-1} \alpha_f^{-2} \bigg(M_1 C \alpha_{|f|}^2  + M_2C^{-1} \bigg(\int |(f\circ \theta^H)'(t)|dt\bigg)^2\bigg) +O(a^{-\frac{1}{2}}b^d)\\
&\leq a^{-1}b^d M_3 R^{d-1} \alpha_f^{-2}\alpha_{|f|}\int |(f\circ \theta^H)'(t)|dt +O(a^{-\frac{1}{2}}b^d)
\end{align*}
where the last inequality holds for
\begin{align*}
C=\sqrt{\frac{M_2}{M_1}} \alpha_{|f|}^{-1} \int |(f\circ \theta^H)'(t)|dt.
\end{align*}

The case of non-compact support follows similarly by choosing $k=2-\eps \frac{s-d}{s+1}$ in Lemma \ref{generalF}.
\end{proof}

\begin{rem}
When $b=b(a)$, the bound in Theorem \ref{vargeneral} is generally not best possible, see the discussion in Section \ref{ballvar}.
If $\lim_{a\to 0} a^{-1}b =0 $,  the first inequality in Lemma~\ref{generalF} yields a better convergence rate. If $f$ is smooth, further partial integrations in \eqref{intpar} yield even better bounds.
\end{rem}

We conclude this section by stating two refinements of  Lemma \ref{generalF} whose proofs are exactly similar. The first one applies to the situation  where $\La$ is not isotropic, but is varied by a random rotation with smooth density:
\begin{cor}\label{1stecor}
Assume $X$ is $C^3$, $f$ is $C^3$ on $(0,1)$, and $\rho $ satisfies Condition \ref{cond1}. Let $h:S^{d-1} \to \R$ be $C^{d}$. Then there are constants $M_1,M_2 > 0$ depending only on $X$ such that for all $R$ large and $a$ small,
\begin{equation*}
\int_{S^{d-1}}|\mathcal{F}(g_a)(Ru)|^2 h(u)du \leq \begin{dcases}
M_1 C_h \bigg( \int |(f\circ \theta^H)'(t)|dt \bigg)^2\big( R^{-d-1}  + O\big( a R^{-d-\frac{1}{2}}\big)\big)\\
M_2 C_h \bigg( \int |f\circ \theta^H(t)|dt\bigg)^2\big( a^2 R^{-d+1} + O\big( a^3 R^{-d+\frac{3}{2}}\big)\big)
\end{dcases}
\end{equation*}
where 
\begin{equation*}
C_h= \sum_{|\alpha| \leq d} \sup \bigg|\frac{\partial^{|\alpha|}}{\partial u^{\alpha}}h\bigg|.
\end{equation*}
Here the derivatives of $h$ should be understood in a fixed coordinate system on $S^{d-1}$ depending only on $X$. Again, the $O$-term may depend on $f$ and $\rho$.

If $\rho $ satisfies Condition \ref{cond2}, the inequalities hold with $O( a^{-2+(\frac{s-d}{s+1}+1)\eps}R^{-d-2+\eps})$ in the first inequality and $O( a^{(\frac{s-d}{s+1}+1)\eps}R^{-d+\eps})$ in the second where $\frac{2(s+1)}{2s-d+1} <  \eps < 2$.
\end{cor}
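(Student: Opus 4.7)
The plan is to run the proof of Lemma~\ref{generalF} step by step, inserting the weight $h(u)$ into the spherical integrand and tracking how many of its derivatives can appear in any single term. After the same initial partial integration in $u$ (for the gradient inequality), the Weyl tube formula, and the partition of unity $\{\varphi_j\}$ on $T^r$, the task reduces to bounding, for each patch $X_j$ and each $m$,
\begin{equation*}
\int_{S^{d-1}}\bigg|\int_{\partial X_j}\int_{-r}^{r} t^m \nabla_{u_0} g_a(x+tn)\,e^{2\pi iR(x+tn)\cdot u}s_m(x)\varphi_j(x)\,dt\,\sigma(dx)\bigg|^2 h(u)\,du,
\end{equation*}
and the analogous quantity with $g_a$ in place of $\nabla_{u_0} g_a$. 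I would then split $S^{d-1}$ into the polar caps (via the cutoff $\psi$) and the equatorial band (via $1-\psi$).

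On the polar caps I would use the same parametrization $u(s,v)=sw+\sqrt{1-s^2}\,v$ and integrate by parts $d$ times in $s$. Whenever a derivative lands on $h(u(s,v))$ rather than on $\psi$ or the Jacobian $J$, the chain rule on $S^{d-1}$ produces a factor bounded by $C_h$ times quantities uniform in $x,y\in X_j$, because the parametrization $(s,v)\mapsto u(s,v)$ and the change of coordinates to the fixed reference system on $S^{d-1}$ depend only on $X$. The remaining steps (Cauchy--Schwarz, reduction to the ball-avoidance integral, and the use of Lemma~\ref{diff}) then go through verbatim and produce the stated bounds multiplied by $C_h$.

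On the equatorial piece I would repeat the rotation parametrization $(\theta,v)$ together with the secondary partition $\eta_l$, apply the finite-difference identity, and partially integrate in $v'$ as in \cite{brandolini}. Derivatives of $h$ now appear in the $(\theta,v)$ coordinates, but since the transition to the reference coordinate system depends only on $X$, each is again controlled by $C_h$ up to an $X$-dependent factor. The subsequent Hardy--Littlewood maximal bound, the pointwise estimates on $\nabla g_a$ and $\Delta^1 \nabla g_a$, and the substitution $x^2=x^{2-\eps}x^{\eps}$ for Condition~\ref{cond2} carry over unchanged. The main obstacle I anticipate is purely combinatorial bookkeeping: one must check that no more than $d$ derivatives of $h$ can land on any single term---one per integration by parts in $s$ for the cap piece, or one per partial integration in $v'$ (plus one from the initial $u$-integration when present) for the equatorial piece---so that $C_h=\sum_{|\alpha|\le d}\sup|\partial^\alpha h/\partial u^\alpha|$ really suffices.
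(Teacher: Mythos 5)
Your proposal is correct and matches the paper's (quite brief) proof: rerun the proof of Lemma~\ref{generalF} with $h(u)$ inserted into the spherical integrand and observe that $h$ is only differentiated during the $d$-fold partial integration on $S^{d-1}$, which produces at most $d$ derivatives of $h$ and hence the constant $C_h$. One small correction to your bookkeeping: the ``initial partial integration'' in \eqref{intpar} is integration by parts in the spatial variable $x$ (using $\nabla_u e^{-2\pi i R x\cdot u} = -2\pi i R\, e^{-2\pi i Rx\cdot u}$), so it never touches $h(u)$ and the ``plus one'' you anticipate is unnecessary, though this does not affect the conclusion.
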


\begin{proof}
This follows by a direct copy of the proof of Lemma \ref{generalF}. The only place where $h$ is touched is in the $d$-fold partial integration in the $S^{d-1}$-coordinates. This involves differentiation of $h$ up to $d$ times, giving rise to the constant $C_h$ on the right hand side of the inequality.
\end{proof}

The second refinement gives a convenient approximation of the Fourier coefficients:
\begin{cor}\label{2ndencor}
Assume that $X$ is $C^3$, $f$ is $C^3$ on $(0,1)$, and $\rho $ satisfies Condition \ref{cond1}. Let $h:S^{d-1} \to \R$ be $C^{d}$. Then there is a constant $M>0$ depending only on $X$, $f$, and $\rho$ such that for all $R$ large and $a$ small,
\begin{align*}
\MoveEqLeft \int_{S^{d-1}}\bigg| \mathcal{F}(g_a)(Ru) - a\int_{\partial X} \int_\R  f\circ \theta^H( t) (n \cdot u)^2 e^{-2\pi i R(x+atn) \cdot u} dt \sigma(dx) \bigg|^2 h(u)du \\
&\leq 
M  C_h a R^{-d-\frac{1}{2}} 
\end{align*}
where $C_h$ is as in Corollary \ref{1stecor}.

If $\rho $ satisfies Condition \ref{cond2}, the inequality holds but with $ a^{-2+(\frac{s-d}{s+1}+1)\eps}R^{-d-2+\eps}$ on the right hand side.
\end{cor}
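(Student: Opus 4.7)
The proof mirrors the structure of Lemma~\ref{generalF}, with one extra bookkeeping step that pulls the leading-order contribution out in closed form instead of merely bounding it. I start from the integration-by-parts identity \eqref{intpar},
\begin{equation*}
\mathcal{F}(g_a)(Ru)=\frac{1}{2\pi iR}\int_{\R^d}\nabla_u g_a(x)\,e^{-2\pi iRx\cdot u}\,dx,
\end{equation*}
and apply the Weyl tube formula to the integrand on the right (valid since $\supp g_a\subseteq T^{aD}\subseteq T^r$ for $a$ small). This expresses $\mathcal{F}(g_a)(Ru)$ as the sum over $m=0,\ldots,d-1$ of boundary integrals
\begin{equation*}
\frac{1}{2\pi iR}\int_{\partial X}\int_{-r}^{r}t^m\,\nabla_u g_a(x+tn)\,e^{-2\pi iR(x+tn)\cdot u}\,s_m(x)\,dt\,\sigma(dx).
\end{equation*}

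In the $m=0$ summand I use Lemma~\ref{diff} (with $|\alpha|=1$, which requires $f\in C^2$ on $[\beta,\omega]$) to replace $\nabla_u g_a(x+tn)$ by the halfspace analogue $a^{-1}(f\circ\theta^H)'(t/a)\,(n\cdot u)$, with uniformly bounded error $M$. After the change of variable $t=as$ the resulting principal piece is
\begin{equation*}
\frac{1}{2\pi iR}\int_{\partial X}(n\cdot u)\int_{\R}(f\circ\theta^H)'(s)\,e^{-2\pi iR(x+asn)\cdot u}\,ds\,\sigma(dx).
\end{equation*}
A single integration by parts in $s$ — the boundary terms vanish because $\supp f\subseteq[\beta,\omega]\subset(0,1)$ forces $f\circ\theta^H$ to be compactly supported — transfers the derivative onto the exponential, producing a factor $-2\pi iRa(n\cdot u)$. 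Combined with the prefactor $\frac{1}{2\pi iR}$ and the leftover $(n\cdot u)$ this yields exactly
\begin{equation*}
a\int_{\partial X}(n\cdot u)^2\int_{\R}f\circ\theta^H(s)\,e^{-2\pi iR(x+asn)\cdot u}\,ds\,\sigma(dx),
\end{equation*}
i.e.\ the quantity subtracted from $\mathcal{F}(g_a)(Ru)$ in the statement.

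It remains to control the difference, which is the sum of (i) the $m=0$ approximation error $(2\pi iR)^{-1}\int_{\partial X}\int E(x,t)\,e^{-2\pi iR(x+tn)\cdot u}\,dt\,\sigma(dx)$ with $|E|\leq M$ and $\supp E\subseteq\partial X\times[-aD,aD]$, and (ii) the $m\geq 1$ Weyl terms, whose integrands are again bounded uniformly once $|t|^m\leq(aD)^m$ is absorbed into the constants. Both families of integrals are of exactly the shape estimated at the end of the proof of Lemma~\ref{generalF}: the partition of $\partial X$ into patches $X_j$ and of $S^{d-1}$ into polar and equatorial caps, the Minkowski integral inequality, the $d$-fold partial integration in the spherical variable (which, exactly as in Corollary~\ref{1stecor}, is what produces the factor $C_h$), the $\Delta^1$-difference trick in the tangential slab, and the Hardy--Littlewood maximal inequality all apply verbatim to the weighted $L^2(h\,du)$ norm. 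The key simplification is that the integrand here is bounded by $M$ rather than by $a^{-1}|(f\circ\theta^H)'(t/a)|$, so the leading term $a^{-2}R^{-2}|(f\circ\theta^H)'|^2$ in the analogue of \eqref{estimat} drops out and only the $M_{10}a^{-1}R^{-1/2}$-type remainder survives; this is precisely what gives rise to the announced bound $MC_h\,aR^{-d-1/2}$. Under Condition~\ref{cond2} the Lemma~\ref{diff} error degrades to $Ma^{(s-d)/(s+1)}$, and plugging this into the same chain with the same choice of $\eps$ as in Lemma~\ref{generalF} produces the stated $a^{-2+((s-d)/(s+1)+1)\eps}R^{-d-2+\eps}$. \textbf{Main obstacle.} Analytically nothing new is required; the delicate point is the sign tracking through the integration by parts in $s$, which is what pins down the coefficient of the leading term as $+a(n\cdot u)^2$ rather than $\pm a$ or $\pm a^2$ times some other power of $(n\cdot u)$.
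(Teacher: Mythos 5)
Your proof is correct and takes essentially the same route as the paper: write $\mathcal{F}(g_a)(Ru)$ via the $u$-directional integration by parts and the Weyl tube formula, replace $\nabla_u g_a$ in the $m=0$ term by $a^{-1}(f\circ\theta^H)'(a^{-1}t)(n\cdot u)$ using Lemma~\ref{diff}, bound the replacement error and the $m\geq1$ terms by running the machinery of Lemma~\ref{generalF}'s proof (partition, Minkowski inequality, difference operator, Hardy--Littlewood), and finish with a partial integration in $t$ to identify the leading boundary integral. The only cosmetic difference is that the paper first subtracts the entire sum $\sum_{m=0}^{d-1}$ and then separately shows the $m>0$ pieces are $O(aR^{-d-\frac12})$, whereas you absorb them directly into the error; the sign and $(n\cdot u)^2$ bookkeeping you carry out is what the paper compresses into the phrase ``Partial integration in the $m=0$ term yields the claim.''
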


\begin{proof}
Replacing $\nabla_{u} g_a$ by 
\begin{equation*}
\nabla_{u} g_a (x+tn) - a^{-1}(f\circ \theta^H)'(a^{-1} t) n \cdot u 
\end{equation*}
in the proof of Lemma \ref{generalF}  shows that 
\begin{align*}
\MoveEqLeft \int_{S^{d-1}}\bigg| \mathcal{F}(g_a)(Ru) - \frac{a^{-1}}{2\pi i R} \sum_{m=0}^{d-1} \int_{\partial X} \int_\R t^m  (f\circ \theta^H)'(a^{-1} t) (n \cdot u)\\
&\quad \times e^{-2\pi i R(x+tn) \cdot u} dt s_m(x) dx \bigg|^2 h(u)du \\
 &\leq M C_h a R^{-d-\frac{1}{2}}.
\end{align*}
The proof also shows that all terms
\begin{align*}
\int_{S^{d-1}}\bigg|\frac{a^{-1}}{2\pi i R}  \int_{\partial X} \int_\R t^m  (f\circ \theta^H)'(a^{-1}t) (n\cdot u) e^{2\pi i R(x+tn) \cdot u} dt s_m(x) dx \bigg|^2 h(u)du
\end{align*}
with $m>0$ are of order at least $a R^{-d-\frac{1}{2}}$. Partial integration in the $m=0$ term yields the claim.
\end{proof}

\section{More on the case of a ball}\label{ballsec}
This section contains a few extensions of the results in the case where $X$ is a ball. 
The theorems are only stated when $\rho $ satisfies Condition \ref{cond1}, but the case of Condition \ref{cond2} is similar.
Again we first consider the Fourier coefficients:
\begin{lem} \label{ballF}
Let $X=B(R)$.
Suppose $\rho$ satisfies Condition \ref{cond1} and $f$ is $C^2$ on $[\beta, \omega]$, possibly having discontinuities at $\beta, \omega$. Then there are constants $M_1 , M_2>0$ such that for all $R$ sufficiently large 
\begin{align}\nonumber
 \big|\mathcal{F}(f\circ \theta_a^X)( \xi )\big|^2  ={}& 4|\xi|^{-d+1} a^2\bigg( \int_{\varphi(\omega)}^{\varphi(\beta)} f\circ \theta^H(r) \cos(2\pi(R+ar)|\xi| + \nu_d)\\
&\times (R+ar)^{\frac{d-1}{2}} dr\bigg)^2 +O(R^{d-1}a^2|\xi|^{-d-1})\label{jegkedermig}
\end{align}
where the $O$-terms may depend on $f$ and $\rho$. 
In particular,
\begin{equation*}
\big|\mathcal{F}(f\circ \theta_a^X)(\xi)\big|^2 \leq \begin{dcases}
M_1a^2 R^{d-1} |\xi|^{{-d+1}} \bigg( \int |f\circ \theta^{H}(t)| dt  + O(|\xi|^{-1}+a)\bigg)^2\\
 M_2R^{d-1}|\xi|^{{-d-1}} \bigg(|f(\beta)|+|f(\omega)|+\int |(f\circ \theta^{H})'(t)| dt +O(a) \bigg)^2 .
\end{dcases}
\end{equation*}
\end{lem}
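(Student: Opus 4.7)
The strategy exploits that $B(R)$ is rotationally symmetric, so $g_a = f\circ \theta_a^{B(R)}$ is a radial function on $\R^d$. Its Fourier transform therefore reduces to a one-dimensional Bessel integral
\[
\mathcal{F}(g_a)(\xi) = 2\pi|\xi|^{-(d-2)/2}\int_0^\infty g_a(re_1)\,J_{(d-2)/2}(2\pi r|\xi|)\,r^{d/2}\,dr.
\]
Since $g_a$ is supported in the annulus $R - aD \leq r \leq R + aD$, for $R$ large the Bessel argument $2\pi r|\xi|$ is large throughout the range of integration, and I would substitute the Hankel asymptotic $J_\nu(z) = \sqrt{2/(\pi z)}\cos(z - \nu\pi/2 - \pi/4) + O(z^{-3/2})$ to obtain
\[
\mathcal{F}(g_a)(\xi) = 2|\xi|^{-(d-1)/2}\int g_a(re_1)\cos(2\pi r|\xi| - \nu_d)r^{(d-1)/2}\,dr + E_1,
\]
where the remainder $E_1$ from the $O(z^{-3/2})$ tail is small enough to contribute at most $O(R^{d-1}a^2|\xi|^{-d-1})$ after squaring and cross-multiplication with the main term.

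Next I would exploit that a ball of large radius is locally close to its tangent halfspace. Using the scaling identity $\theta_a^{B(R)}(x) = \theta_{a/R}^{B(1)}(x/R)$, Lemma \ref{diff} applied to $X = B(1)$ with resolution parameter $a/R$ gives
\[
|g_a((R+at)e_1) - f\circ\theta^H(t)| \leq Ma/R
\]
uniformly in $|t|\leq D$, with support restricted to $t \in [\varphi(\omega),\varphi(\beta)]$. Changing variables $r = R + at$ in the cosine integral and substituting this approximation yields the candidate main contribution
\[
A := 2a|\xi|^{-(d-1)/2}\int_{\varphi(\omega)}^{\varphi(\beta)} f\circ\theta^H(t)\cos(2\pi(R+at)|\xi| - \nu_d)(R+at)^{(d-1)/2}\,dt.
\]
Writing $\mathcal{F}(g_a) = A + E$ with $E$ collecting all the errors, the identity $|A+E|^2 = |A|^2 + 2\Re(A\bar E) + |E|^2$ together with the trivial estimate $|A| \leq CaR^{(d-1)/2}|\xi|^{-(d-1)/2}$ shows that the cross and squared-error terms fit inside $O(R^{d-1}a^2|\xi|^{-d-1})$. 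The sign discrepancy between $\nu_d$ in my derivation and in the stated formula is harmless since cosine is even.

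The two explicit bounds then arise from estimating $|A|^2$ in two different ways. For the first, using $|\cos|\leq 1$ and $(R+at)^{(d-1)/2} = R^{(d-1)/2}(1+O(a))$ directly produces $|A| \leq 2aR^{(d-1)/2}|\xi|^{-(d-1)/2}(\int|f\circ\theta^H| + O(a))$; squaring and combining with the stated prefactor yields the first bound. For the second, I would integrate by parts once in $t$, using the primitive $(2\pi a|\xi|)^{-1}\sin(2\pi(R+at)|\xi| - \nu_d)$ of the cosine. The boundary terms at $t = \varphi(\omega)$ and $t = \varphi(\beta)$ produce the $|f(\omega)|$ and $|f(\beta)|$ contributions (these are genuine boundary terms because $f$ may jump at $\beta,\omega$, while $f\circ\theta^H$ is smooth on the open interval), and differentiating the remaining integrand contributes $\int|(f\circ\theta^H)'(t)|dt$ plus an $O(a)$ correction from differentiating $(R+at)^{(d-1)/2}$. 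The integration by parts extracts the factor $(a|\xi|)^{-1}$, and squaring produces the second bound with the prefactor $R^{d-1}|\xi|^{-d-1}$.

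The chief difficulty I anticipate is careful bookkeeping through the squaring: ensuring that the $O(a/R)$ halfspace error from Lemma \ref{diff}, the Bessel tail in $E_1$, and the $O(a)$ corrections coming from $(R+at)^{(d-1)/2}$ all combine into the single remainder $O(R^{d-1}a^2|\xi|^{-d-1})$, and cleanly separating the genuine boundary contributions in the integration by parts from the interior integral so the $|f(\beta)| + |f(\omega)|$ terms surface without contaminating the $\int|(f\circ\theta^H)'|$ term. Everything else is standard oscillatory-integral manipulation made possible by the radial symmetry of the ball.
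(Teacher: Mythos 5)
Your outer framework—radial reduction to a Bessel integral, Hankel asymptotics, then approximation of the ball's intensity profile by the halfspace profile $f\circ\theta^H$—is the same as the paper's. But the step where you substitute the pointwise bound $|g_a((R+at)e_1)-f\circ\theta^H(t)|\leq Ma/R$ directly into the cosine integral has a genuine gap: this zero-order approximation produces an error $E$ with no decay in $|\xi|$. Concretely, your $E$ satisfies $|E|\lesssim |\xi|^{-(d-1)/2}\cdot (a/R)\cdot a\cdot R^{(d-1)/2}=O(a^2R^{(d-3)/2}|\xi|^{-(d-1)/2})$, and when you multiply this against the trivial bound $|A|\lesssim aR^{(d-1)/2}|\xi|^{-(d-1)/2}$ the cross term is $O(a^3R^{d-2}|\xi|^{-d+1})$. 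Compared to the claimed remainder $O(R^{d-1}a^2|\xi|^{-d-1})$, the ratio is $aR^{-1}|\xi|^2$, which blows up as $|\xi|\to\infty$ (and $|\xi|\sim b^{-1}$ in the intended application). So the cross and squared-error terms do \emph{not} fit inside the stated remainder, contrary to your claim; there is no amount of bookkeeping that will fix this, because the decay in $|\xi|$ is simply missing.

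The paper repairs exactly this by bounding the \emph{derivative} of the difference rather than the difference itself. It applies Lemma~\ref{diff} with $|\alpha|=1$ to get $\big|\tfrac{d}{dr}\big(g_a(r)-f\circ\theta^H(a^{-1}(r-R))\big)\big|\leq M_2$ on the set $A_1(a)$ where both profiles lie in $[\beta,\omega]$, and uses Lemma~\ref{tlemma} to show that the exceptional set $A_2(a)$ (where only one of the two is in the transition band, so the derivative difference is only $O(a^{-1})$) has measure $O(a^2)$. Integrating by parts against $\cos(2\pi|\xi|r+\nu_d)$ then extracts a factor $|\xi|^{-1}$, giving $E=O(aR^{(d-1)/2}|\xi|^{-(d+1)/2})$, which is what makes the cross term small enough. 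You miss both pieces: the derivative-level comparison that enables the extra integration by parts, and the $A_1/A_2$ splitting that is needed because Lemma~\ref{diff} only applies where \emph{both} intensity profiles are inside $[\beta,\omega]$. The rest of your argument (the two \emph{In particular} bounds by $|\cos|\leq 1$ and by a further integration by parts extracting $f(\beta)$, $f(\omega)$) is fine and matches the paper. Your scaling observation $\theta_a^{B(R)}=\theta_{a/R}^{B(1)}(\cdot/R)$ is a clean way to track the $R$-dependence of the constant in Lemma~\ref{diff}, and you could keep it—but it has to be applied at the level of the first derivative, not the zeroth.
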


\begin{proof}
Noting that $g_a(x)$ only depends on $|x|$ and is supported on $[R-aD,R+aD]$, we rewrite the Fourier integral using \cite[IV, Thm 3.3]{stein}:
\begin{align*}
\mathcal{F}(f\circ \theta_a^X)(\xi) {}&= \int_{\R^d} g_a(z)e^{-2\pi i \xi \cdot z} dz
=2\pi  |\xi|^{-\frac{d-2}{2}} \int_{R-aD}^{R+aD} g_a(r) J_{\frac{d}{2}-1}(2\pi |\xi| r ) r^{\frac{d}{2}}dr
\end{align*}
where $J_{\frac{d}{2}-1}$ is the Bessel function of the first kind.
By \cite[IV, Lem. 3.11]{stein}, 
\begin{equation*}
J_{\frac{d}{2}-1}(x)=\sqrt{\tfrac{2}{\pi x}}\cos(x+\nu_d) + O(x^{-\frac{3}{2}})
\end{equation*}
 where $\nu_d=-\frac{d-1}{4}\pi$, so since
\begin{equation*}
|\xi|^{-\frac{d+1}{2}}
\int_{R-aD}^{R+aD} g_a( r)r^{\frac{d-3}{2}} dr \leq   M_1 a R^{\frac{d-3}{2}}\sup |f||\xi|^{-\frac{d+1}{2}},
\end{equation*}
it is enough to consider
\begin{equation*}
2 |\xi|^{-\frac{d-1}{2}}  \int_{R-aD}^{R+aD} g_a( r) \cos(2\pi |\xi| r + \nu_d) r^{\frac{d-1}{2}} dr.
\end{equation*}

Observe that by Lemma \ref{diff}
\begin{align*}
\MoveEqLeft \bigg|\frac{d}{dr}f\circ \theta_a^X( r) - \frac{d}{dr}f\circ \theta^H( a^{-1}(r-R)) \bigg| \leq  M_2  \mathds{1}_{A_1(a)}(r) + 2M_3a^{-1}\sup |f'| \mathds{1}_{A_2(a)}(r)
\end{align*}
where
\begin{align*}
A_1(a) &= \{ r \in [R-aD, R+aD] \mid \theta_a^X( r),\theta^H( a^{-1}(r-R)) \in [\omega,\beta]\},\\
A_2(a) &= [R-aD,R+aD]\backslash ( A_1(a)\cup \{ r \in [R-aD,R+aD] \mid \\
 &\quad \theta_a^X(r),\theta^H( a^{-1}(r-R)) \notin [\omega,\beta]\}).
\end{align*}
By Lemma \ref{tlemma} there is a constant $M_4$ such that
$\Ha^1(A_2(a)) \leq M_4 a^2$.
Hence \eqref{jegkedermig} follows by partial integration and Lemma \ref{tlemma},
\begin{align*}
\MoveEqLeft  |\xi|^{-\frac{d-1}{2}} \int_{R-aD}^{R+aD} (g_a( r)-f\circ \theta^H( a^{-1}(r-R))) \cos(2\pi|\xi| r +\nu_d) r^{\frac{d-1}{2}} dr\\
&\leq  M_5 R^{\frac{d-1}{2}}  a |\xi|^{-\frac{d+1}{2}}.
\end{align*}
%
The first inequality follows from
\begin{align*} 
\MoveEqLeft |\xi|^{-\frac{d-1}{2}}  \int_{R+a\varphi(\omega)}^{R+a\varphi(\beta)}f\circ \theta^H( a^{-1} (r-R))\cos(2\pi |\xi|r + \nu_d ) r^{\frac{d-1}{2}} dr\\
&\leq 
|\xi|^{-\frac{d-1}{2}} a \int_{\varphi(\beta)}^{\varphi(\omega)}|f\circ \theta^H( r)|(ar+R)^{\frac{d-1}{2}} dr\\
&\leq 
|\xi|^{-\frac{d-1}{2}} a  R^{\frac{d-1}{2}}\bigg( \int_{\varphi(\beta)}^{\varphi(\omega)}|f\circ \theta^H( r)|dr + O(a)\bigg).
\end{align*}
The second inequality follows similarly by partial integration. 
%
\end{proof}

As in Section \ref{X} we obtain:

\begin{thm} \label{ballvar}
Assume that $\rho$ satisfies Condition \ref{cond1} and $f$ is $C^2$ on $[\beta,\omega]$ possibly with discontinuities at $\beta, \omega$.
Then there is a constant $M>0 $ depending only on $\La$ such that for all $R$ large and $a,b$ small 
\begin{equation*}
\Var(\hat{S}^{a,b}(f)(B(R))) \leq a^{-1}b^{d} M R^{d-1} \frac{\alpha_{|f|}}{\alpha_f^2} \bigg(|f(\beta)|+|f(\omega)|+ \int |(f\circ \theta^H)'(t)|dt +O(a)\bigg)
\end{equation*}
where the $O $-term may depend on $f$ and $\rho$.
\end{thm}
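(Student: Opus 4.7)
My plan is to adapt the argument for Theorem \ref{vargeneral}, replacing the sphere-integrated bounds of Lemma \ref{generalF} by the sharper pointwise bounds of Lemma \ref{ballF} that are available because $g_a = f\circ\theta_a^{B(R)}$ is radial. Since $g_a$ is radial, so is $\mathcal{F}(g_a)$, and the inner sphere-integral in \eqref{formelvar} collapses: $\int_{S^{d-1}}|\mathcal{F}(g_a)(b^{-1}|\xi|u)|^2\,du = \omega_d |\mathcal{F}(g_a)(b^{-1}|\xi|)|^2$, giving
\begin{equation*}
\Var(\hat{S}^{a,b}(f)(B(R))) = (a\alpha_f)^{-2} \sum_{\xi \in \La^*\setminus\{0\}} |\mathcal{F}(g_a)(b^{-1}|\xi|)|^2.
\end{equation*}

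I would then change variables to $\eta = b^{-1}\xi$ ranging over $b^{-1}\La^*$ and split the sum at the scale $|\eta| = Ca^{-1}$, with $C>0$ a constant to be optimized. This is the natural crossover, since the two bounds of Lemma \ref{ballF} agree in order when $|\eta|\sim a^{-1}$: on $|\eta|<Ca^{-1}$ I would apply the first bound (the one with $a^2 R^{d-1}|\eta|^{-d+1}$), and on $|\eta|\geq Ca^{-1}$ the second (with $R^{d-1}|\eta|^{-d-1}$). Rescaling the lattice sum estimates used in the proof of Theorem \ref{vargeneral} back to $b^{-1}\La^*$ via $\eta = b^{-1}\xi$ yields
\begin{align*}
\sum_{\eta \in b^{-1}\La^*\setminus\{0\},\, |\eta|<Ca^{-1}} |\eta|^{-d+1} &\leq b^d M_1^\La\, C a^{-1}, \\
\sum_{\eta \in b^{-1}\La^*\setminus\{0\},\, |\eta|\geq Ca^{-1}} |\eta|^{-d-1} &\leq b^d M_2^\La\, C^{-1} a,
\end{align*}
with $M_1^\La, M_2^\La$ depending only on $\La$.

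Combining the two pointwise Fourier bounds with these lattice sum estimates and dividing by $(a\alpha_f)^2$, I would obtain, up to lower order errors,
\begin{equation*}
\Var \leq a^{-1}b^d R^{d-1}\alpha_f^{-2}\bigl[\tilde M_1 C \alpha_{|f|}^2 + \tilde M_2 C^{-1}\bigl(|f(\beta)|+|f(\omega)|+\textstyle\int |(f\circ\theta^H)'(t)|\,dt + O(a)\bigr)^2\bigr].
\end{equation*}
Choosing $C$ proportional to $\alpha_{|f|}^{-1}(|f(\beta)|+|f(\omega)|+\int|(f\circ\theta^H)'(t)|\,dt)$ balances the two bracketed terms and converts the sum of a linear and a reciprocal-linear function of $C$ into $2\sqrt{\tilde M_1 \tilde M_2}\,\alpha_{|f|}(|f(\beta)|+|f(\omega)|+\int|(f\circ\theta^H)'(t)|\,dt + O(a))$, which is precisely the right-hand side of Theorem \ref{ballvar}.

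The one delicate point I anticipate is that the lattice sum estimate for $|\eta|<Ca^{-1}$ implicitly assumes that a ball of this radius actually contains nonzero points of $b^{-1}\La^*$; if $b/a$ is so small that it does not, the low-frequency sum is empty, and the unconditional estimate $\sum_{\eta \in b^{-1}\La^*\setminus\{0\}}|\eta|^{-d-1}\leq b^{d+1}\sum_{\xi\in\La^*\setminus\{0\}}|\xi|^{-d-1}$ combined with the second bound of Lemma \ref{ballF} already gives a variance bound of order $a^{-2}b^{d+1}R^{d-1}$, which is dominated by $a^{-1}b^d R^{d-1}$ exactly because $b \leq a$ in this regime. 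Hence the claimed inequality holds uniformly for all small $a,b$.
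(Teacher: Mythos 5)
Your proposal is correct and follows precisely the route the paper intends: the paper's ``proof'' is just the sentence ``As in Section \ref{X} we obtain:'', leaving the reader to redo the proof of Theorem \ref{vargeneral} with the pointwise Fourier bounds of Lemma \ref{ballF} in place of the sphere-averaged ones of Lemma \ref{generalF}. You have correctly exploited radiality to collapse the $S^{d-1}$-integral, split the dual-lattice sum at $|\eta|\sim a^{-1}$, applied the two branches of Lemma \ref{ballF} and the rescaled lattice-sum estimates, and optimized the crossover $C$ to produce the geometric-mean constant, exactly as the paper does in the proof of Theorem \ref{vargeneral}.
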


For the rest of this section, we assume that $b$ is a function of $a$. 
In the special case $b=a$ where the lattice distance and the PSF are shrinked at the same rate, which is also the case studied in \cite{am4}, we already saw in Section \ref{expsec} that the convergence rate given by Theorem \ref{ballvar} is best possible. We have the following preliminary version of Theorem  \ref{maindet}:
\begin{cor}
Let $X$, $\rho$, and $f$ be as in Lemma \ref{ballF}. If $b=a$,
\begin{align*}
\limsup_a a^{-d-1} |\mathcal{F}(g_a)( a^{-1} \xi )|^2 & =4 |\xi|^{-d+1} R^{d-1} |\mathcal{F} (f\circ \theta^H )(|\xi|)|^2\\
\liminf_a a^{-d-1} |\mathcal{F}(g_a)( a^{-1}\xi )|^2 & = 0.
\end{align*} 
The variance may be decomposed as 
\begin{align*}
\Var(\hat{S}(f)^{a,a}(B(R))) ={}& a^{d-1} 2 \alpha_f \omega_{d}^{-1}(1 +Z(a)) \sum_{\xi \in \La^* \backslash \{0\}} |\xi|^{-d+1} R^{d-1} |\mathcal{F} (f\circ \theta^H )(|\xi|)|^2\\
&+O(a^{d})
\end{align*}
where $Z(a)$ is an oscillating term satisfying $|Z(a)|\leq 1$.
\end{cor}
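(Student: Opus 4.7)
The plan is to derive both claims directly from Lemma~\ref{ballF} by substituting the lattice argument $\xi\mapsto a^{-1}\xi$ and isolating the rapidly oscillating phase
\begin{equation*}
\phi_\xi(a):=2\pi Ra^{-1}|\xi|+\nu_d.
\end{equation*}
Starting from equation~\eqref{jegkedermig} with $\xi$ replaced by $a^{-1}\xi$, I would pull the factor $R^{(d-1)/2}$ out of $(R+ar)^{(d-1)/2}$ (at a relative cost of order $a$) and expand $\cos(\phi_\xi(a)+2\pi r|\xi|)$ via the angle-addition formula. The two slowly varying integrals that remain are exactly the real and imaginary parts of $\mathcal{F}(f\circ\theta^H)(|\xi|)$; writing $\mathcal{F}(f\circ\theta^H)(|\xi|)=|\mathcal{F}(f\circ\theta^H)(|\xi|)|\,e^{-i\psi_\xi}$ for some phase $\psi_\xi\in\R$ recombines them into
\begin{equation*}
a^{-d-1}\bigl|\mathcal{F}(g_a)(a^{-1}\xi)\bigr|^2 = 4|\xi|^{-d+1}R^{d-1}\bigl|\mathcal{F}(f\circ\theta^H)(|\xi|)\bigr|^2\cos^2(\phi_\xi(a)+\psi_\xi)+o(1),
\end{equation*}
uniformly in $\xi$ bounded away from $0$.

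The limsup/liminf statement is then immediate: $a\mapsto \phi_\xi(a)$ is continuous and tends to $+\infty$ as $a\to 0^+$, so its image modulo $2\pi$ is dense in $[0,2\pi)$, which forces $\cos^2(\phi_\xi(a)+\psi_\xi)$ to have limsup $1$ and liminf $0$. Multiplying by the prefactor $4|\xi|^{-d+1}R^{d-1}|\mathcal{F}(f\circ\theta^H)(|\xi|)|^2$ yields the two stated limits.

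For the variance decomposition I would feed this expansion into \eqref{formelvar}. Rotation-invariance of $B(R)$ makes $\mathcal{F}(g_a)$ radial, so $\int_{S^{d-1}}|\mathcal{F}(g_a)(a^{-1}|\xi|u)|^2\,du=\omega_d\,|\mathcal{F}(g_a)(a^{-1}|\xi|)|^2$, absorbing the $\omega_d^{-1}$ in \eqref{formelvar}. Applying the identity $\cos^2\theta=\tfrac12(1+\cos2\theta)$ splits the resulting lattice sum into a constant part, which combined with the prefactor $(a\alpha_f)^{-2}\cdot 4a^{d+1}\cdot\tfrac12 = 2a^{d-1}\alpha_f^{-2}$ produces the main term, and an oscillating part packaged as
\begin{equation*}
Z(a):=\frac{\sum_{\xi\in\La^*\setminus\{0\}}|\xi|^{-d+1}R^{d-1}|\mathcal{F}(f\circ\theta^H)(|\xi|)|^2\cos\bigl(2(\phi_\xi(a)+\psi_\xi)\bigr)}{\sum_{\xi\in\La^*\setminus\{0\}}|\xi|^{-d+1}R^{d-1}|\mathcal{F}(f\circ\theta^H)(|\xi|)|^2}.
\end{equation*}
The bound $|Z(a)|\leq 1$ then follows termwise from $|\cos|\leq 1$ and positivity of the weights.

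The main obstacle I anticipate is the bookkeeping of error terms so that they collapse to $O(a^d)$ after summation over $\La^*\setminus\{0\}$. Two absolutely convergent lattice sums must be controlled: $\sum_\xi |\xi|^{-d-1}$, which handles the $O(R^{d-1}a^2|\xi|^{-d-1})$ remainder of Lemma~\ref{ballF}, and $\sum_\xi |\xi|^{-d+1}|\mathcal{F}(f\circ\theta^H)(|\xi|)|^2$, which absorbs the $O(a)$ relative error coming from the expansion of $(R+ar)^{(d-1)/2}$. Convergence of the latter uses that $f\circ\theta^H$ is compactly supported and piecewise $C^2$ (jumps at most at the two endpoints of the support), which gives $\mathcal{F}(f\circ\theta^H)(|\xi|)=O(|\xi|^{-1})$ and makes the summand comparable to $|\xi|^{-d-1}$.
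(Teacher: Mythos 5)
Your proposal is correct and follows essentially the same route as the paper: both start from equation \eqref{jegkedermig}, replace $(R+ar)^{(d-1)/2}$ by $R^{(d-1)/2}$ at cost $O(a)$, and isolate the oscillation as $\cos^2$ of a phase tending to infinity in $a^{-1}$ — the paper does this via $e^{\pm ih(a)}I$ with $I=\int f\circ\theta^H(r)e^{2\pi ir|\xi|}dr$, while you write $\mathcal{F}(f\circ\theta^H)(|\xi|)=|\cdot|e^{-i\psi_\xi}$ and use angle addition, which is the same identity. Your error bookkeeping (two convergent lattice sums, $O(|\xi|^{-1})$ decay of $\mathcal{F}(f\circ\theta^H)$ from the jump discontinuities) and the radiality reduction of the spherical integral are correct and in fact spelled out more carefully than in the paper.
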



\begin{proof}
Applying partial integration to \eqref{jegkedermig} yields
\begin{align*}
|\mathcal{F}(g_a)(\xi)|^2 ={}& 4 a^{d+1}|\xi|^{-d+1}
R^{d-1}  \bigg( \int_{\varphi(\omega)}^{\varphi(\beta)} f\circ \theta^H(r) \cos(2\pi(a^{-1}R+r)|\xi| + \nu_d) dr \bigg)^2 \\
&+ O(a^{d+2}|\xi|^{-d-1}).
\end{align*}

Write $h(a)=2\pi a^{-1}R|\xi| + \nu_d$ and 
\begin{equation*}
I=\int_{\varphi(\omega)}^{\varphi(\beta)} f\circ \theta^H(r) e^{2\pi i r|\xi|}dr. 
\end{equation*}
Then
\begin{align*}
 \bigg(2\int_{\varphi(\omega)}^{\varphi(\beta)} f\circ \theta^H(r) \cos(2\pi(a^{-1}R+r)|\xi| + \nu_d) dr\bigg)^2{}& =(e^{ih(a)}I +  + e^{-ih(a)})
\\&=2Re( e^{2ih(a)}I^2) + 2|I|^2.
\end{align*}
This takes it maximum for $e^{ih(a)}=\frac{\bar{I}}{|I|}$ and its minimum for $e^{ih(a)}=-\frac{\bar{I}}{|I|}$. 
In the last expression, the $2Re( e^{2ih(a)}I^2)$-terms and the $O(a^{d+2}|\xi|^{-d-1})$-terms form $Z(a)$. 
\end{proof}

The convergence rate is typically faster. When $b$ converges faster to zero than $a$, we easily obtain the following improvement:
\begin{cor}
Let $X$, $\rho$, and $f$ be as in Lemma \ref{ballF}. For $\lim_{a\to 0 } a^{-1}b =0$,
\begin{equation}\label{varrate}
\Var (\hat{S}(f)^{a,b}(B(R))) \in O(a^{-2}b^{d+1}).
\end{equation}
Moreover,
\begin{align*}
 |\mathcal{F}(g_a)( b^{-1} \xi )|^2  = {}& b^{d+1}|\xi|^{-d-1} R^{d-1}{\tfrac{1}{\pi^2}}\Big(f(\beta)\sin(2\pi b^{-1}(R+a\varphi(\beta))|\xi| + \nu_d)\\
&- f(\omega)\sin(2\pi b^{-1}(R+a\varphi(\omega))|\xi| + \nu_d) + O(a+a^{-1}b|\xi|^{-1})\Big)^2.
\end{align*} 
\end{cor}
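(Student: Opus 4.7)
The plan is to refine Lemma \ref{ballF} by performing two integrations by parts on the inner integral, exploiting the small parameter $b/(a|\xi|)$ that appears naturally when $a^{-1}b\to 0$. Starting from Lemma \ref{ballF} and substituting $\xi\mapsto b^{-1}\xi$,
\begin{equation*}
|\mathcal{F}(g_a)(b^{-1}\xi)|^2 = 4b^{d-1}|\xi|^{-d+1}a^2 J^2 + O\bigl(R^{d-1}a^2b^{d+1}|\xi|^{-d-1}\bigr),
\end{equation*}
where $J = \int_{\varphi(\omega)}^{\varphi(\beta)} f\circ\theta^H(r)\cos(2\pi(R+ar)b^{-1}|\xi|+\nu_d)(R+ar)^{(d-1)/2}\,dr$. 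Since $\frac{d}{dr}\sin(2\pi(R+ar)b^{-1}|\xi|+\nu_d)=2\pi a b^{-1}|\xi|\cos(\cdot)$, each integration by parts extracts a factor $b/(2\pi a|\xi|)$, and this is the quantity that drives the improved rate.

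A first integration by parts in $J$ yields the boundary contribution $\frac{bR^{(d-1)/2}}{2\pi a|\xi|}(A+O(a))$, with $A$ equal to the bracket appearing in the statement of the corollary. Here the $O(a)$ comes from $(R+a\varphi(\beta))^{(d-1)/2}/R^{(d-1)/2}=1+O(a)$ (and similarly at $\varphi(\omega)$). A second integration by parts applied to the remainder, which is legitimate because $f$ is $C^2$ on $[\beta,\omega]$ and $\rho\in C^2$ under Condition \ref{cond1} make $(f\circ\theta^H)(r)(R+ar)^{(d-1)/2}$ a $C^2$ function on $[\varphi(\omega),\varphi(\beta)]$, bounds the remainder by $O(b^2 R^{(d-1)/2}/(a^2|\xi|^2))$. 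Pulling out $\frac{bR^{(d-1)/2}}{2\pi a|\xi|}$ gives
\begin{equation*}
J=\frac{bR^{(d-1)/2}}{2\pi a|\xi|}\bigl(A + O(a + a^{-1}b|\xi|^{-1})\bigr),
\end{equation*}
so squaring and multiplying by $4b^{d-1}|\xi|^{-d+1}a^2$ produces the claimed formula. The Lemma \ref{ballF} remainder $O(R^{d-1}a^2b^{d+1}|\xi|^{-d-1})$ is of the same order as $O(a^2)$ relative to the leading factor $b^{d+1}R^{d-1}|\xi|^{-d-1}/\pi^2$, hence is absorbed into the $O(a+a^{-1}b|\xi|^{-1})^2$ term.

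For the variance bound \eqref{varrate}, plug the pointwise estimate into \eqref{formelvar}: since $|A|$ is bounded uniformly in $a,b,\xi,u$, we have $|\mathcal{F}(g_a)(b^{-1}|\xi|u)|^2\in O(b^{d+1}R^{d-1}|\xi|^{-d-1})$ uniformly in $u\in S^{d-1}$. The dual-lattice sum $\sum_{\xi\in\La^*\setminus\{0\}}|\xi|^{-d-1}$ converges by comparison with $\int_{|x|\ge c}|x|^{-d-1}\,dx$, so $\sum_\xi\int_{S^{d-1}}|\mathcal{F}(g_a)(b^{-1}|\xi|u)|^2\,du \in O(b^{d+1}R^{d-1})$, and multiplication by the prefactor $(a\alpha_f)^{-2}\omega_d^{-1}$ yields $\Var(\hat{S}(f)^{a,b}(B(R)))\in O(a^{-2}b^{d+1})$.

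The only delicate point is the interpretation of the boundary values $f(\beta), f(\omega)$ as one-sided limits from inside $[\beta,\omega]$, so that the endpoint evaluations in the integrations by parts make sense despite the allowed discontinuities of $f$ at $\beta$ and $\omega$; the $C^2$ hypothesis of Lemma \ref{ballF} on the closed interval $[\beta,\omega]$ guarantees this, and also ensures the second integration by parts is valid. No other step poses a real obstacle: it is a clean double integration by parts followed by summing a convergent series.
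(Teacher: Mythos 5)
Your argument is correct and follows essentially the same route as the paper: the variance bound comes from the summable pointwise estimate $|\mathcal{F}(g_a)(b^{-1}\xi)|^2 \in O(b^{d+1}R^{d-1}|\xi|^{-d-1})$ (the paper invokes the second inequality of Lemma \ref{ballF} directly, while you read it off your refined formula, but these are the same content), and the refined formula for the Fourier coefficient comes from two integrations by parts in \eqref{jegkedermig}, extracting the boundary term on the first pass and bounding the remaining integral by $O(a^{-1}b|\xi|^{-1})$ on the second. The bookkeeping on the $(R+ar)^{(d-1)/2}$ factor and the absorption of the $O(R^{d-1}a^2b^{d+1}|\xi|^{-d-1})$ remainder are exactly as in the paper.
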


The latter statement shows that the convergence rate heavily depends on the smoothness assumptions.
If $f(\beta)\neq f(\omega)$, then $\limsup_a b^{-d-1}|\mathcal{F}(g_a)( b^{-1} \xi )|^2 >0$, so \eqref{varrate} is best possible.
On the other hand, if $f $ and $\rho$ are sufficiently often differentiable, further partial integrations using the asymptotic expansion \cite[Chap. 7.21 (1)]{watson} yield even better convergence rates.

\begin{proof}
The second inequality in Theorem \ref{ballF} immediately yields the first claim.

Applying partial integration to \eqref{jegkedermig}
yields
\begin{align*}
\MoveEqLeft |\mathcal{F}(g_a)( b^{-1} \xi )|^2= b^{d+1}|\xi|^{-d-1} R^{{d-1}}{\frac{1}{\pi^2}} \bigg(f(\beta) \sin(2\pi(R+a\varphi(\beta))b^{-1}|\xi| + \nu_d)\\
&-f(\omega) \sin(2\pi(R+a\varphi(\omega))b^{-1}|\xi| + \nu_d)\\
& - \int_{\varphi(\omega)}^{\varphi(\beta)} (f\circ \theta^H )'(r) \sin(2\pi(R+ar)b^{-1}|\xi| + \nu_d)dr\bigg)^2\\
& +O(R^{d-1}ab^{d+1}|\xi|^{-d-1}).
\end{align*}
Another partial integration shows that the latter integral is of order $O(a^{-1}b|\xi|^{-1})$.
\end{proof}

Finally, when $b$ converges slowly, a bound on the convergence rate is given by:
\begin{cor}
Let $X$, $\rho$, and $f$ be as in Lemma \ref{ballF}.
When $\lim_{a\to 0} ab^{-1} = 0$ and $b(a)$ is continuous with $\lim_{a\to 0} b = 0$,
\begin{align}\label{jul}
&\limsup_a a^{-2}b^{-d+1} |\mathcal{F}(g_a)(|\xi|)|^2 = 4 |\xi|^{-d+1} R^{d-1} \alpha_f^2 >0,\\
&\liminf_a a^{-2}b^{-d+1} |\mathcal{F}(g_a)(|\xi|)|^2 =0.\nonumber
\end{align}
\end{cor}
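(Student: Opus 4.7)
The plan is to start from the explicit Fourier expansion in Lemma \ref{ballF} and specialize it to the scaled argument $b^{-1}\xi$ used in the preceding corollary (I read the statement's $\mathcal{F}(g_a)(|\xi|)$ as $\mathcal{F}(g_a)(b^{-1}\xi)$, since $g_a$ is radial so only the norm matters). Substituting $\xi\mapsto b^{-1}\xi$ in \eqref{jegkedermig} gives
\begin{align*}
|\mathcal{F}(g_a)(b^{-1}\xi)|^2 ={}& 4 a^2 b^{d-1}|\xi|^{-d+1}\Big(\int_{\varphi(\omega)}^{\varphi(\beta)} f\circ\theta^H(r)\\
&\times \cos(2\pi(R+ar)b^{-1}|\xi|+\nu_d)(R+ar)^{(d-1)/2}\,dr\Big)^2\\
&+O\big(R^{d-1}a^2 b^{d+1}|\xi|^{-d-1}\big),
\end{align*}
so after multiplication by $a^{-2}b^{-d+1}$ the remainder becomes $O(b^2|\xi|^{-d-1})=o(1)$.

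The next step is to pull the cosine out of the integral using the hypothesis $ab^{-1}\to 0$. Writing the phase as $2\pi R b^{-1}|\xi| + 2\pi(ab^{-1})r|\xi|+\nu_d$, the $r$-dependent summand vanishes uniformly on the compact interval $[\varphi(\omega),\varphi(\beta)]$; likewise $(R+ar)^{(d-1)/2}\to R^{(d-1)/2}$ uniformly there. Together with the identity $\int_{\varphi(\omega)}^{\varphi(\beta)} f\circ\theta^H(r)\,dr = \alpha_f$, this yields the key approximation
\begin{equation*}
a^{-2}b^{-d+1}|\mathcal{F}(g_a)(b^{-1}\xi)|^2 = 4|\xi|^{-d+1}R^{d-1}\alpha_f^2\cos^2\!\big(2\pi R b^{-1}|\xi|+\nu_d\big)+o(1).
\end{equation*}

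The conclusion then reduces to showing that the oscillating factor sweeps out the full range $[0,1]$. Since $b$ is continuous with $b(a)\to 0$, the function $a\mapsto 2\pi R b(a)^{-1}|\xi|+\nu_d$ is continuous on the relevant interval and diverges to $+\infty$ as $a\to 0$. By the intermediate value theorem it meets every residue modulo $\pi$ along some sequence $a_n\to 0$. Selecting residues for which $\cos^2=1$ produces the sequence witnessing $\limsup = 4|\xi|^{-d+1}R^{d-1}\alpha_f^2$, and residues for which $\cos^2=0$ witness $\liminf=0$.

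The main obstacle is not conceptual but organisational: one has to be sure the $o(1)$ in the key approximation is genuinely uniform in $a$, so that it does not obscure the oscillation when one later extracts subsequences realising the extrema. This is safe because both uniform approximations above depend on $a$ only through $ab^{-1}$ and are independent of the residue of $b^{-1}|\xi|$ modulo $1$, but this is the point of the argument where the bookkeeping must be done carefully.
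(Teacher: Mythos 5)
Your proposal is correct and follows essentially the same approach as the paper: substitute $\xi\mapsto b^{-1}\xi$ in \eqref{jegkedermig}, use the cosine addition formula (equivalently, your uniform phase-splitting) to extract the oscillating factor $\cos^2(2\pi R b^{-1}|\xi|+\nu_d)$ while the remaining integral converges to $\alpha_f$ and the error terms are $o(1)$, then observe that this factor sweeps $[0,1]$. You also correctly recognize that the corollary's $\mathcal{F}(g_a)(|\xi|)$ must be read as $\mathcal{F}(g_a)(b^{-1}\xi)$ for the normalization $a^{-2}b^{-d+1}$ to be meaningful. The one place you add genuine value is the IVT step: the paper merely says the $\cos^2$ factor ``oscillates between $0$ and $1$,'' whereas you make explicit why continuity of $b$ and $b^{-1}\to\infty$ are exactly what guarantee sequences $a_n\to 0$ realising the extremes, which is the reason the continuity hypothesis appears in the statement at all.
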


Note that the bounds \eqref{jul} are not summable, so it is not implied that the variance is $O(b^{d-1})$.

\begin{proof}
Again we have \eqref{jegkedermig}.
Using the addition formulas, write
\begin{align*}
\int_{\varphi(\omega)}^{\varphi(\beta)}{}& f\circ \theta^H(r) \cos(2\pi |\xi|{b^{-1}}(R+ar) + \nu_d)dr\\
={}&\cos(2\pi |\xi|b^{-1}R + \nu_d) \int_{\varphi(\omega)}^{\varphi(\beta)}  f\circ \theta^H(r)  \cos(2\pi |\xi|b^{-1}ar)dr \\
&- \sin(2\pi |\xi|b^{-1}R + \nu_d) \int_{\varphi(\omega)}^{\varphi(\beta)} f\circ \theta^H(r) \sin(2\pi |\xi|b^{-1}ar)dr.
\end{align*}
Clearly, the latter term is $O(ab^{-1})$, while 
\begin{equation*}
\lim_{a\to 0}  \int_{\varphi(\omega)}^{\varphi(\beta)}  f\circ \theta^H(r)  \cos(2\pi |\xi|b^{-1}ar)dr = \int_{\varphi(\omega)}^{\varphi(\beta)}  f\circ \theta^H(r) dr
\end{equation*}
and $\cos^2(2\pi |\xi|b^{-1}R + \nu_d)$ oscillates between $0$ and 1, which yields the result.
\end{proof}

\section{Asymptotic variance formulas}\label{expsec}
In this section, we give some explicit formulas for the variance in the special case $a=b$.


Assume that $X$ is randomly rotated with density $h: \SO(d) \to [0,\infty)$ or equivalently that the lattice $\La$ is randomly rotated with density $Q\mapsto h(Q^{-1})$. 
Then for $v \in S^{d-1}$, $Q^{-1}v \in S^{d-1}$ is random with a density $u \mapsto h_v(u)$ that is $C^\infty$ in $(v,u)\in S^{d-1}\times S^{d-1}$. For $\xi \in \R^{d}\backslash \{0\}$, we let $h_\xi(u)=h_{\xi/|\xi|}(u)$. Then the variance is given by
\begin{align*}
\Var(\hat{S}(f)^{a,a}(X))=a^{-2} \alpha_f^{-2} \sum_{\xi \in \La^*\backslash \{0\}} \int_{S^{d-1}}|\mathcal{F}(g_a)(a^{-1}|\xi| u)|^2 h_\xi(u) du.
\end{align*}
The sum converges by Corollary \ref{1stecor}. We get the following asymptotic formula for the variance:

\begin{thm}\label{varapprox}
Assume that $X$ is a $C^{3}$ manifold, $f$ is $C^3$ on $(0,1)$, and $\rho $ satisfies Condition \ref{cond1}. Let $h: \SO(d)\to \R$ be $C^\infty$. Then
\begin{align*}
\Var(\hat{S}(f)^{a,a}(X))=  {}& \alpha_f^{-2} \sum_{\xi \in \La^*\backslash \{0\}}\int_{S^{d-1}} \bigg| \int_{\partial X} \int_\R  f\circ \theta^H( t) (n\cdot u)^2\\
&\times e^{2\pi i |\xi|(a^{-1}x+tn) \cdot u} dt \sigma( dx) \bigg|^2 h_\xi (u) du\\
& +\sup_{\xi \in \La^*\backslash \{0\}} \{C_{h_\xi}\}O( a^{d-\frac{3}{4}}).
\end{align*}
Here $C_{h_\xi}$ is as in Corollary \ref{1stecor} and the $O$-term depends only on $X$, $\rho$, and $f$.

If Condition \ref{cond2} is satisfied with $s>2d+1$, this holds but with $O(a^{d-1+\eps})$ for some small $\eps>0$.
\end{thm}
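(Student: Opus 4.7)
The plan is to apply the pointwise Fourier approximation of Corollary \ref{2ndencor} termwise inside the variance expression
\[
\Var(\hat{S}(f)^{a,a}(X)) = a^{-2}\alpha_f^{-2}\sum_{\xi\in\La^*\setminus\{0\}}\int_{S^{d-1}}|\mathcal{F}(g_a)(a^{-1}|\xi|u)|^2 h_\xi(u)\,du
\]
derived just before the theorem. Setting $F_\xi(u)=\mathcal{F}(g_a)(a^{-1}|\xi|u)$ and $G_\xi(u)=a\int_{\partial X}\int_\R f\circ\theta^H(t)(n\cdot u)^2 e^{-2\pi i|\xi|(a^{-1}x+tn)\cdot u}\,dt\,\sigma(dx)$, so that $a^{-2}|G_\xi|^2$ is exactly the integrand of the theorem's main term, the whole task reduces to bounding $a^{-2}\alpha_f^{-2}\sum_\xi\int_{S^{d-1}}(|F_\xi|^2-|G_\xi|^2) h_\xi\,du$.

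Two estimates feed the per-$\xi$ bound. Corollary \ref{2ndencor} applied with $R=a^{-1}|\xi|$ and $h=h_\xi$ gives
\[
\int_{S^{d-1}}|F_\xi-G_\xi|^2 h_\xi\,du \leq M\,C_{h_\xi}\,a^{d+3/2}|\xi|^{-d-1/2},
\]
and the first inequality of Corollary \ref{1stecor} together with $|G_\xi|\leq|F_\xi|+|F_\xi-G_\xi|$ yields $\int_{S^{d-1}}|G_\xi|^2 h_\xi\,du = O(C_{h_\xi}\,a^{d+1}|\xi|^{-d-1})$. Writing $||F_\xi|^2-|G_\xi|^2|\leq|F_\xi-G_\xi|(|F_\xi|+|G_\xi|)$ and applying Cauchy-Schwarz in $L^2(S^{d-1},h_\xi\,du)$ produces
\[
\Bigl|\int_{S^{d-1}}(|F_\xi|^2-|G_\xi|^2)h_\xi\,du\Bigr| = O\bigl(C_{h_\xi}\,a^{d+5/4}|\xi|^{-d-3/4}\bigr).
\]

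Since the exponent $d+3/4$ exceeds $d$, the termwise sum $\sum_\xi|\xi|^{-d-3/4}$ converges absolutely on $\La^*\setminus\{0\}$, so the total error is at most $\sup_\xi C_{h_\xi}\cdot O(a^{d+5/4})$. Multiplication by $a^{-2}\alpha_f^{-2}$ then yields the asserted $\sup_\xi\{C_{h_\xi}\}\cdot O(a^{d-3/4})$. Finiteness of $\sup_\xi C_{h_\xi}$ follows from smoothness of $h$ on the compact group $\SO(d)$: the family $(\xi/|\xi|,u)\mapsto h_\xi(u)$ is jointly $C^\infty$ on $S^{d-1}\times S^{d-1}$, so all $u$-derivatives are uniformly bounded. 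The Condition \ref{cond2} variant is handled identically, substituting the weaker error $O(a^{-2+(\frac{s-d}{s+1}+1)\eps}R^{-d-2+\eps})$ from Corollary \ref{2ndencor}; the same Cauchy-Schwarz step then produces the exponent $d-\tfrac{3}{2}+\frac{(s-d)\eps}{2(s+1)}$ of $a$, which exceeds $d-1$ for some admissible $\eps\in(\frac{s+1}{s-d},2)$ precisely when $s>2d+1$.

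The main obstacle is that neither corollary alone suffices: Corollary \ref{1stecor} controls $|F_\xi|^2$ but does not compare it with $G_\xi$, while Corollary \ref{2ndencor} gives an error decaying only as $|\xi|^{-d-1/2}$, too slow to be summed at the required rate. Cauchy-Schwarz against $G_\xi$ effectively interpolates between the two exponents $|\xi|^{-d-1}$ and $|\xi|^{-d-1/2}$, producing the geometric mean $|\xi|^{-d-3/4}$, which is just barely summable and delivers the final $O(a^{d-3/4})$ rate in $a$.
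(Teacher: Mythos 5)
Your proof takes essentially the same approach as the paper, which merely asserts that Corollaries \ref{1stecor} and \ref{2ndencor} yield the per-$\xi$ bound $M\,C_{h_\xi}\,a^{d+5/4}|\xi|^{-d-3/4}$; your Cauchy--Schwarz step supplies the details left implicit there, producing exactly the geometric mean of the two corollaries' exponents. One minor imprecision: the triangle inequality in fact gives $\int_{S^{d-1}}|G_\xi|^2 h_\xi\,du = O\bigl(C_{h_\xi}(a^{d+1}|\xi|^{-d-1}+a^{d+3/2}|\xi|^{-d-1/2})\bigr)$ rather than simply $O(C_{h_\xi}a^{d+1}|\xi|^{-d-1})$, so the per-$\xi$ error picks up an extra $O\bigl(C_{h_\xi}a^{d+3/2}|\xi|^{-d-1/2}\bigr)$ term which dominates when $|\xi|>a^{-1}$; however, since $\sum_{\xi\in\La^*\setminus\{0\}}|\xi|^{-d-1/2}<\infty$, this contributes only $O(a^{d+3/2})$ to the total and is absorbed into the $O(a^{d+5/4})$ bound, so the final estimate is unaffected.
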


\begin{proof}
Corollary \ref{1stecor} and \ref{2ndencor} show that
\begin{align*}
\MoveEqLeft \bigg| a^2\int_{S^{d-1}}\bigg|\int_{\partial X} \int_\R  f\circ \theta^H( t) (n \cdot u)^2  e^{2\pi i |\xi|(a^{-1}x+tn) \cdot u} dt \sigma(dx) \bigg|^2 h_\xi (u)du \\
&-\int_{S^{d-1}}\big| \mathcal{F}(g_a)(a^{-1}|\xi|u)\big|^2 h_\xi(u) du\bigg| \leq 
M  C_{h_\xi} a^{d+\frac{5}{4}}|\xi|^{-d-\frac{3}{4}}. 
\end{align*}
Since $\sup_{\xi \in \La^*\backslash 0} C_{h_\xi} $ is finite, the claim follows. 

The case of Condition \ref{cond2} follows by choosing $\eps$ close to 2 in Corollary \ref{2ndencor}.
\end{proof}

\subsection{The case of a convex set}
We now restrict to the special case where $X$ is a smooth compact convex set with nowhere vanishing Gaussian curvature $K$. In this case, the normal map $n: \partial X \to S^{d-1}$ is a diffeomorphism with inverse $x: S^{d-1} \to \partial X$.

We shall obtain the following description of the variance:
\begin{thm}\label{maindet}
Assume that $X$ is a smooth compact convex set with nowhere vanishing Gaussian curvature, $f$ is $C^3$ on $(0,1)$, $h:\SO(d) \to S^{d-1}$ is smooth, and $\rho$ satisfies Condition \ref{cond1} or \ref{cond2} with $s>2d+1$. Then
\begin{align*}
 \Var(\hat{S}(f)^{a,a}(X))={}& 2a^{d-1} \alpha_f^{-2} \bigg(\sum_{\xi \in \La^*\backslash \{0\}}\big| \mathcal{F}(f\circ \theta^H)(|\xi|) \big|^2|\xi|^{-d+1} \int_{\partial X}  h_\xi(n(x)) \sigma(dx)\\
&+ Z(a)\bigg) 
\end{align*}
where $Z(a)$ is in general an oscillating term satisfying
\begin{align*}
\MoveEqLeft \limsup_a \pm Z(a) \\
&\leq  \sum_{\xi \in \La^*\backslash \{0\}}\big| \mathcal{F}(f\circ \theta^H)(|\xi|) \big|^2|\xi|^{-d+1} \int_{S^{d-1}} (K(x(u))K(x(-u)))^{-\frac{1}{2}} h_\xi(u) du.
\end{align*}
\end{thm}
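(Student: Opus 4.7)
The plan is to start from the approximation in Theorem \ref{varapprox} and apply the method of stationary phase to the resulting oscillatory boundary integrals, exploiting the convexity and nowhere vanishing Gaussian curvature of $X$ in exactly the way used by Herz and Hlawka (cf.\ Subsection \ref{varknow}).

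First I would carry out the inner $t$-integration. Since $f\circ\theta^H$ is real and compactly supported, for each fixed $x\in\partial X$ and $u\in S^{d-1}$,
\begin{equation*}
\int_\R f\circ\theta^H(t)(n\cdot u)^2 e^{2\pi i|\xi|(a^{-1}x+tn)\cdot u}\,dt=(n\cdot u)^2\,\overline{\phi(|\xi|(n\cdot u))}\,e^{2\pi iRx\cdot u},
\end{equation*}
where $\phi:=\mathcal{F}(f\circ\theta^H)$ (note $\phi(-r)=\overline{\phi(r)}$) and $R:=a^{-1}|\xi|$. The inner squared integral in Theorem \ref{varapprox} then becomes $|I_\xi(u)|^2$ with
\begin{equation*}
I_\xi(u)=\int_{\partial X}(n(x)\cdot u)^2\,\overline{\phi(|\xi|(n(x)\cdot u))}\,e^{2\pi iRx\cdot u}\sigma(dx).
\end{equation*}

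Since the Gauss map is a diffeomorphism, the phase $x\mapsto x\cdot u$ has exactly two non-degenerate critical points on $\partial X$, namely $x(u)$ and $x(-u)$, with Hessians of signatures $-(d-1)$ and $+(d-1)$ and absolute determinant $K(x(\pm u))$. At these points $n\cdot u=\pm1$, so the amplitude reduces to $\overline{\phi(|\xi|)}$, resp.\ $\phi(|\xi|)$. Classical stationary phase therefore yields
\begin{align*}
I_\xi(u)=R^{-(d-1)/2}\bigl[&K(x(u))^{-1/2}\overline{\phi(|\xi|)}\,e^{2\pi iRx(u)\cdot u-i\pi(d-1)/4}\\
&+K(x(-u))^{-1/2}\phi(|\xi|)\,e^{2\pi iRx(-u)\cdot u+i\pi(d-1)/4}\bigr]+O(R^{-(d+1)/2}).
\end{align*}
Squaring and expanding $|A+B|^2=|A|^2+|B|^2+2\,Re(A\bar B)$ produces a non-oscillatory part $R^{-(d-1)}|\phi(|\xi|)|^2(K(x(u))^{-1}+K(x(-u))^{-1})$ and a cross term of modulus at most $2R^{-(d-1)}(K(x(u))K(x(-u)))^{-1/2}|\phi(|\xi|)|^2$, oscillating in $a$ through the factor $e^{2\pi iR(x(u)-x(-u))\cdot u}$. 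Integrating against $h_\xi(u)du$ and using $du=K(x)\sigma(dx)$ under $u=n(x)$ converts $\int_{S^{d-1}}K(x(\pm u))^{-1}h_\xi(u)du$ into $\int_{\partial X}h_\xi(\pm n(x))\sigma(dx)$. Summing over $\xi\in\La^*\setminus\{0\}$ and exploiting the symmetry of $\La^*$ under $\xi\mapsto-\xi$ together with the identity $h_{-\xi}(u)=h_\xi(-u)$ (from $Q^{-1}(-v)=-Q^{-1}v$), the $+$ and $-$ contributions combine after reindexing into $2\sum_\xi|\xi|^{-d+1}|\phi(|\xi|)|^2\int_{\partial X}h_\xi(n(x))\sigma(dx)$. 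With $R^{-(d-1)}=a^{d-1}|\xi|^{-d+1}$, this reproduces exactly the claimed main lattice sum with prefactor $2a^{d-1}\alpha_f^{-2}$. The cross terms assemble into $2a^{d-1}\alpha_f^{-2}Z(a)$, and the $a$-independent bound on each term gives the claimed $\limsup$ statement immediately.

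The main obstacle is uniform-in-$\xi$ control of the stationary-phase remainder: a naive per-term $O(R^{-d})$ error does not sum over $\La^*$. I would resolve this by splitting at $|\xi|\sim a^{-\eta}$ for some $\eta\in(0,1)$: on the low-frequency block only finitely many terms contribute and each pointwise error is $o(a^{d-1})$, while on the high-frequency tail one invokes the uniform Fourier bound of Corollary \ref{1stecor} together with the rapid decay of $|\phi|$ (gained from the $C^3$-smoothness of $f\circ\theta^H$) to show the tail contributes $o(a^{d-1})$. Combined with the residual $O(a^{d-3/4})$ (or $O(a^{d-1+\eps})$ under Condition \ref{cond2}) already present in Theorem \ref{varapprox}, this closes the asymptotic.
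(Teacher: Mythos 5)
Your overall architecture matches the paper's: starting from Theorem \ref{varapprox}, evaluating the boundary oscillatory integral by stationary phase with the two non-degenerate critical points $x(\pm u)$ of the phase, reading off the curvature from the Hessian, passing from $\int_{S^{d-1}}K(x(u))^{-1}h_\xi(u)\,du$ to $\int_{\partial X}h_\xi(n(x))\,\sigma(dx)$, and combining the $\epsilon=\pm1$ contributions over $\xi\mapsto -\xi$ via $h_{-\xi}(u)=h_\xi(-u)$. The expansion of $|A+B|^2$ into the constant part and the oscillating cross term, and the identification of the claimed bound on $\limsup_a\pm Z(a)$, are also exactly the paper's algebra.

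There are, however, two genuine gaps. First, you invoke ``classical stationary phase'' and then integrate the resulting approximation over $u\in S^{d-1}$ without justifying that the error is uniform in $u$. This is not automatic, and it is precisely the content of Proposition \ref{Fourierdet} in the paper, whose proof re-runs H\"ormander's stationary-phase lemma (Lemma \ref{larslemma}) with care, using the nowhere-vanishing Gaussian curvature to bound $|\det H(\phi)|$ from below and $|y-y_0|/|\nabla\phi(y)|$ from above uniformly over $u$. Your proposal takes this uniformity for granted. Second, your handling of the sum over $\xi$ is internally inconsistent. You split at $|\xi|\sim a^{-\eta}$ and claim ``only finitely many terms contribute'' on the low-frequency block; but that block contains on the order of $a^{-\eta d}$ lattice points, and the per-term stationary-phase constant is not controlled in $|\xi|$ (the paper explicitly notes that the constant in Proposition \ref{Fourierdet} is \emph{not} uniform in $R$), so the low-frequency error cannot be summed this way. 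The paper sidesteps this entirely: Corollaries \ref{1stecor} and \ref{2ndencor} give a bound decaying like $|\xi|^{-d-1}$ uniformly in $a$, which establishes uniform convergence of the series in Theorem \ref{varapprox} and thereby permits interchanging $\limsup_a$ with the sum over $\xi$; only then is stationary phase applied term by term. Replacing your frequency split with this uniform-convergence argument, and proving the $u$-uniformity of the stationary-phase constant, would close the proof.
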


Using the inequality $2k_1k_2 \leq k_1^{2}+ k_2^{2}$, we obtain the following corollary in the isotropic case:
\begin{cor}\label{detcor}
Assume $X$ is a smooth compact convex set with nowhere vanishing Gaussian curvature, $f$ is $C^3$ on $(0,1)$, $\rho$ satisfies Condition \ref{cond1} or \ref{cond2} with $s>2d+1$, and $\La$ is isotropic. Then
\begin{align*}
\MoveEqLeft \Var(\hat{S}(f)^{a,a}(X))= 2a^{d-1} \omega_{d}^{-1}\alpha_f^{-2}S(X)\bigg(\sum_{\xi \in \La^*\backslash \{0\}}\big| \mathcal{F}(f\circ \theta^H)(|\xi|) \big|^2|\xi|^{-d+1} +Z(a)\bigg)
\end{align*}
where $Z(a)$ is in general an oscillating term satisfying
\begin{align*}
\limsup_a \pm Z(a) \leq  \sum_{\xi \in \La^*\backslash \{0\}} \big| \mathcal{F}(f\circ \theta^H )(|\xi|) \big|^2|\xi|^{-d+1}.
\end{align*}
\end{cor}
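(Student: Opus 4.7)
The plan is to derive Corollary \ref{detcor} as a direct specialization of Theorem \ref{maindet} to the isotropic setting, where the only non-trivial input is a clean rewriting of the spherical integrals that appear in the main and oscillating terms.

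First I would take $h$ to be the normalized Haar density on $\SO(d)$, so that the induced density $h_\xi(u)$ on $S^{d-1}$ is the constant $\omega_d^{-1}$, independent of $\xi$. Substituted into the main term of Theorem \ref{maindet}, this immediately gives
\begin{equation*}
\int_{\partial X} h_\xi(n(x))\,\sigma(dx) = \omega_d^{-1} S(X),
\end{equation*}
which is exactly the factor appearing in the statement of Corollary \ref{detcor}. Factoring $\omega_d^{-1} S(X)$ out of the summation then reproduces the leading term verbatim.

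The only remaining step is the bound on the oscillating term $Z(a)$. Here I would use the hint from the paper: the pointwise AM--GM inequality
\begin{equation*}
2(K(x(u))K(x(-u)))^{-1/2} \le K(x(u))^{-1} + K(x(-u))^{-1},
\end{equation*}
integrated against $h_\xi(u) = \omega_d^{-1}$ on $S^{d-1}$. By symmetry $u \mapsto -u$, the two terms on the right integrate to the same quantity, so
\begin{equation*}
\int_{S^{d-1}} (K(x(u))K(x(-u)))^{-1/2}\, h_\xi(u)\, du \le \omega_d^{-1} \int_{S^{d-1}} K(x(u))^{-1}\, du.
\end{equation*}
Since $n : \partial X \to S^{d-1}$ is a diffeomorphism with Jacobian determinant $K$ (by nowhere vanishing Gaussian curvature), the change of variables $u = n(x)$ gives $\int_{S^{d-1}} K(x(u))^{-1}\,du = S(X)$. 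Plugging this bound into the $\limsup$ statement of Theorem \ref{maindet} yields exactly the claimed bound on $Z(a)$.

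There is no real obstacle: the entire argument is essentially a bookkeeping step combining the isotropic reduction $h_\xi \equiv \omega_d^{-1}$, the AM--GM inequality, and the surface-area change of variables. The mildly subtle point to get right is making sure the factor $\omega_d^{-1} S(X)$ is pulled outside consistently in both the main term and the $Z(a)$-bound, so that the same prefactor multiplies the same lattice sum on both sides.
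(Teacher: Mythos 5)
Your proof is correct and is precisely the argument the paper intends: the paper itself only remarks "Using the inequality $2k_1k_2\leq k_1^2+k_2^2$, we obtain the following corollary in the isotropic case," and your write-up fills in exactly the three bookkeeping steps implied—$h_\xi\equiv\omega_d^{-1}$, the AM--GM bound together with the $u\mapsto -u$ symmetry, and the change of variables $\int_{S^{d-1}}K(x(u))^{-1}\,du=S(X)$—and correctly tracks the factor $\omega_d^{-1}S(X)$ being pulled out of both the main term and the $Z(a)$-bound.
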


The summands in Theorem \ref{varapprox} are described by \cite[Thm. 7.7.14]{lars}:
\begin{prop}\label{propo}
Assume $X$ is a smooth compact convex set with nowhere vanishing Gaussian curvature, $f^H $ is continuous with compact support,  and $h:SO(d) \to S^{d-1}$ is smooth. For $u\in S^{d-1}$ and $R >0$ given, there is a constant $C>0$ such that for all $a<1$,
\begin{align*}
\MoveEqLeft \bigg| a^{-\frac{d-1}{2}}R^{\frac{d-1}{2}} \int_{\partial X} \int_\R  f^H( t) (n\cdot u)^2 e^{2\pi i R(a^{-1}x+ tn) \cdot u} dt \sigma( dx) \\
   -{}&  \sum_{\epsilon = \pm 1} \mathcal{F}(f^H)(-\epsilon R) K(x(\epsilon u))^{-\frac{1}{2}}e^{2\pi i Ra^{-1} x(\epsilon u) \cdot u -
	\epsilon i\pi (d-1)/4}\bigg| \leq C a.
\end{align*}
\end{prop}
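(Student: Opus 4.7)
The plan is to decouple the $t$-integral from the surface integral and then reduce to a standard stationary phase problem with large parameter $R/a$.

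First I would apply Fubini and factor the exponent as $e^{2\pi i R(a^{-1}x+tn(x))\cdot u} = e^{2\pi i (R/a) x\cdot u}\, e^{2\pi i R t\,(n(x)\cdot u)}$. Since $f^H$ is continuous with compact support, the inner $t$-integral is
\begin{equation*}
\int_\R f^H(t)\, e^{2\pi i R t (n(x)\cdot u)}\, dt = \mathcal{F}(f^H)\bigl(-R(n(x)\cdot u)\bigr),
\end{equation*}
so the whole expression becomes
\begin{equation*}
I(a) := \int_{\partial X} (n(x)\cdot u)^2\, \mathcal{F}(f^H)\bigl(-R(n(x)\cdot u)\bigr)\, e^{2\pi i (R/a)\, x\cdot u}\, \sigma(dx).
\end{equation*}
This is an oscillatory integral on the compact hypersurface $\partial X$, with smooth amplitude (since $\mathcal{F}(f^H)$ is smooth) and phase $\phi(x)=x\cdot u$, and large parameter $\lambda = 2\pi R/a$.

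Next I would locate the critical points of $\phi|_{\partial X}$: these are the points where $u$ is normal to $\partial X$. By convexity and nonvanishing Gaussian curvature the Gauss map $n$ is a diffeomorphism, so the only critical points are $x(u)$ (a strict maximum) and $x(-u)$ (a strict minimum). At $x(\epsilon u)$ the Hessian of $\phi$ restricted to $T_{x(\epsilon u)}\partial X$ equals $-\epsilon\,\II_{x(\epsilon u)}$, where $\II$ is the second fundamental form; in particular $|\det\phi''| = K(x(\epsilon u))$ and the signature is $-\epsilon(d-1)$. The amplitude at the critical point $x(\epsilon u)$ simplifies because $n(x(\epsilon u))\cdot u = \epsilon$, hence $(n\cdot u)^2 = 1$ and the amplitude evaluates to $\mathcal{F}(f^H)(-\epsilon R)$.

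Then I would invoke the stationary phase formula (Hörmander, Theorem 7.7.5/7.7.14 applied in local coordinates on $\partial X$ near each critical point, together with a standard partition of unity argument that contributes only $O(\lambda^{-N})$ from the region where $\nabla\phi \neq 0$). This yields
\begin{equation*}
I(a) = \Bigl(\tfrac{a}{R}\Bigr)^{\frac{d-1}{2}} \sum_{\epsilon=\pm 1} \mathcal{F}(f^H)(-\epsilon R)\, K(x(\epsilon u))^{-\frac{1}{2}}\, e^{2\pi i (R/a)\, x(\epsilon u)\cdot u\, -\, \epsilon i\pi(d-1)/4} + O\bigl(\lambda^{-(d+1)/2}\bigr),
\end{equation*}
where the next-order term comes from the second term in the stationary phase expansion and is bounded uniformly in $a<1$ (with a constant depending on $R$, $u$, $X$, $f^H$). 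Multiplying through by $a^{-(d-1)/2} R^{(d-1)/2}$ turns the remainder into $O(a/R) = O(a)$ and yields the claimed estimate.

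The main obstacle is bookkeeping for the stationary phase formula: verifying that the Hessian of $\phi|_{\partial X}$ at the two critical points has determinant $K(x(\pm u))$ with signature $\mp(d-1)$ (so that the Maslov factors become $e^{\mp i\pi(d-1)/4}$), and checking that the amplitude collapses to $\mathcal{F}(f^H)(\mp R)$ at the stationary points. Everything else is a direct application of a textbook result once the integral has been put in the standard form above.
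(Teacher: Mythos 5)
Your sketch is correct and takes essentially the same route as the paper: Fubini folds the $t$-integral into a smooth amplitude $\mathcal{F}(f^H)(-R(n(x)\cdot u))(n(x)\cdot u)^2$, and the remaining oscillatory integral over $\partial X$ is handled by stationary phase with large parameter $a^{-1}$; the Hessian at the two critical points $x(\pm u)$ is $\mp\II$, giving $|\det|=K$ and signature $\mp(d-1)$, hence the stated Maslov factors. The paper itself derives the proposition by invoking H\"ormander's Theorem 7.7.14 and then redoes the argument in the proof of Proposition~\ref{Fourierdet} to obtain uniformity in $u$. One detail worth tightening in your sketch: attributing the $O(\lambda^{-(d+1)/2})$ error to ``the second term'' alone is a bit loose, since H\"ormander's remainder after $k$ terms is only $O(\tau^{-k})$; for general $d$ one must take $k\geq\tfrac{d+1}{2}$ so that the remainder is $O(\tau^{-(d+1)/2})$, and then observe that every subleading term $\tau^{-(d-1)/2-l}L_l v$ with $l\geq1$ is also $O(\tau^{-(d+1)/2})$ --- exactly the choice $k\geq\tfrac{d+1}{2}$ made in the paper's proof of Proposition~\ref{Fourierdet}.
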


In order to integrate with respect to $u$, we need the constant on the right hand side to be independent of $u$: 

\begin{prop}\label{Fourierdet}
The constant $C$ in Proposition \ref{propo} can be chosen independently of $u\in S^{d-1}$.
\end{prop}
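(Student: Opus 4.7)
The plan is to trace through Hörmander's stationary phase theorem underlying Proposition \ref{propo} (i.e., \cite[Thm.~7.7.14]{lars}) and verify that every datum entering its error constant is a continuous function of $u \in S^{d-1}$. Since $S^{d-1}$ is compact, these data are then uniformly controlled in $u$, yielding a single constant $C$.

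First I would rewrite the inner $t$-integral as $\mathcal{F}(f^H)(-R\,n(x)\cdot u)$, so that the left-hand side of Proposition \ref{propo} (before the $a^{-(d-1)/2}R^{(d-1)/2}$ prefactor) becomes the oscillatory integral on $\partial X$
\begin{equation*}
I(u,a) = \int_{\partial X} (n(x)\cdot u)^{2}\, \mathcal{F}(f^H)(-R\,n(x)\cdot u)\, e^{2\pi i R a^{-1} x\cdot u}\, \sigma(dx),
\end{equation*}
with phase $\phi_u(x)=x\cdot u$ and large parameter $a^{-1}R$. By convexity of $X$ and $K>0$, the only critical points of $\phi_u$ on $\partial X$ are $x(\pm u)$; in any orthonormal frame of the tangent plane, the Hessian of $\phi_u$ there has determinant $\pm K(x(\pm u))$.

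Next I would introduce a smooth $u$-dependent family of local coordinate charts around $x(\pm u)$, given by orthogonal projection onto the tangent hyperplane $u^{\perp}$. Compactness of $\partial X$ together with the uniform bound $K\geq K_{0}>0$ (from continuity of $K$ on a compact surface) ensures these charts are well-defined on a ball of radius independent of $u$. In such charts, the phase, the amplitude $(n\cdot u)^{2}\mathcal{F}(f^H)(-R\,n\cdot u)$, and the Jacobian of $\sigma$ are smooth functions of the local coordinates jointly with $u$, so their $C^{N}$-norms against a fixed cutoff depend continuously on $u$.

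Hörmander's error constant depends only on (i) finitely many $C^{N}$-norms of the phase and amplitude on the support of the cutoff, (ii) a lower bound on $|\det \mathrm{Hess}\,\phi_u|$ at the critical points, and (iii) the size of the cutoff. By the previous paragraph each of these varies continuously in $u\in S^{d-1}$ and is therefore bounded above by its supremum on the compact sphere; taking $C$ to be this supremum yields the proposition. The main obstacle is bookkeeping rather than substance: one must verify that the chart family depends genuinely smoothly on $u$, so that the relevant $C^{N}$-norms are themselves continuous in $u$. Using tangent-plane projection (equivalently, a smoothly varying orthonormal frame for $u^{\perp}$) handles this cleanly, after which the uniformity is a direct compactness argument.
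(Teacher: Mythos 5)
Your high-level plan—trace the dependencies of Hörmander's error constant and then invoke compactness of $S^{d-1}$—is the same strategy as the paper's, but the execution differs in two material respects and the second one is a genuine gap.

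The structural difference: the paper keeps the coordinate charts \emph{fixed} (graphs over $\xi_j^\perp$ for a finite set of directions $\xi_j$ coming from a covering of $\partial X$), and controls the location of $u$ via a second, explicit partition of unity on $S^{d-1}$. You instead use $u$-dependent charts (projection onto $u^\perp$). Your version can be made to work, but the paper's device has a concrete advantage: with fixed charts, the phase $\phi(y)=R\,x(y)\cdot u$ automatically stays in a \emph{bounded subset of} $C^{3k+1}$ as $u$ varies over a cap, which is literally the first hypothesis of the uniformity clause of Lemma~\ref{larslemma}. In your setup you must additionally verify that the chart family and hence the transferred phase and amplitude depend continuously (in $C^{3k+1}$) on $u$; you assert this but it is the kind of thing that must be checked. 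You also do not say anything about the region of $\partial X$ away from $x(\pm u)$ (the paper handles this with the $\psi_{1j}$ cutoff and repeated integration by parts); your ``(iii) the size of the cutoff'' does not by itself dispose of the non-stationary contribution.

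The genuine gap is in your description of what Hörmander's constant depends on. You state it depends only on finitely many $C^N$-norms, a lower bound on $|\det\mathrm{Hess}\,\phi_u|$ at the critical point, and the cutoff size. That is not what the uniformity clause in Lemma~\ref{larslemma} says: it also requires a uniform bound on $|x-x_0|/|\nabla\phi(x)|$ over the support of the amplitude. This is not simply a restatement of a Hessian lower bound; it rules out additional near-critical behaviour on $\supp v$. The paper spends the bulk of its proof verifying exactly this condition: Taylor expansion $\nabla\phi(y)=H(\phi)(y_0)(y-y_0)+O(|y-y_0|^2)$ with a uniform remainder, combined with the uniform bound on $|H(\phi)(y_0)^{-1}|$ furnished by the positive lower bound on the principal curvatures. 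By eliding this into ``bookkeeping rather than substance,'' you conceal the one step of the argument that actually requires work. So while your conditions (i)--(iii) \emph{do}, with a small enough cutoff and the Taylor argument, imply the $|x-x_0|/|\nabla\phi(x)|$ bound, that implication must be established and it is precisely the content of the paper's proof. As written, your argument invokes a version of Hörmander's statement that is stronger than the one in the cited reference.
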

Note that the constant is not guaranteed to be uniform in $R$. 
To show Proposition~\ref{Fourierdet}, we repeat the proof of \cite[Thm. 7.7.14]{lars} with a bit more care, see this reference for details.
The proof is based on the following lemma, which is stated in \cite[Thm. 7.7.5]{lars}:
\begin{lem}\label{larslemma}
Let $K\subseteq \R^d$ compact and $U$ an open neighborhood of $K$. Let $v$  be $C^{2k}$ supported on $K$, $\phi $ be real and $ C^{3k+1}$ on $U$ with $\nabla \phi(x_0) = 0$, $\det H(\phi)(x_0) \neq 0$ and $\nabla \phi \neq 0$ on $K\backslash \{x_0\}$. Then for all $\tau>0$,
\begin{align*}
\MoveEqLeft \bigg|\int_{\R^d} v(x)e^{2\pi i\tau\phi(x)}dx - e^{2\pi i\tau\phi(x_0)}e^{i\pi \frac{\sigma}{4} }|\det(\tau H(\phi)(x_0))|^{-\frac{1}{2}}v(x_0) \sum_{l < k}\tau^{-l}L_l v\bigg| \\
&\leq C\tau^{-k} \sum_{|\alpha|\leq 2k} \sup \bigg|\frac{\partial^{|\alpha|}}{\partial x^\alpha } v\bigg|.
\end{align*}
Here $H(\phi)$ denotes the Hessian matrix of $\phi$, 
 $\sigma$ is the signature of $H(\phi)(x_0)$, and $L_l$ is a differential operator of order $2l$ with coefficients that are rational functions in the derivatives of $\phi$ up to order $2l+2$  at $x_0$, involving only a power of $\det( H(\phi)(x_0))$ in the denominator. In particular, $L_0$ is evaluation at $x_0$.

The constant $C$ is uniform for $\phi$ belonging to a bounded subset of $C^{3k+1}$ and $|x-x_0|/|\nabla \phi(x)|$ uniformly bounded.
\end{lem}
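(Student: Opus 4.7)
The plan is to follow the classical stationary phase strategy as in Hörmander. First I would localize: pick a small ball $B_\delta(x_0)\subseteq U$ and a smooth cutoff $\chi$ equal to $1$ on $B_{\delta/2}(x_0)$ and supported in $B_\delta(x_0)$, then split the integral as $\int v\chi\, e^{2\pi i \tau \phi}dx + \int v(1-\chi)e^{2\pi i \tau \phi}dx$. On the support of $v(1-\chi)$ one has $\nabla\phi\neq 0$, so repeated integration by parts using the transpose of the operator $\frac{1}{2\pi i\tau|\nabla\phi|^2}\nabla\phi\cdot\nabla$ shows that this second piece contributes $O(\tau^{-N})$ for any $N$, with constants depending only on $C^{N}$-bounds of $\phi$ and on the hypothesized uniform bound on $|x-x_0|/|\nabla\phi(x)|$.

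Next, near $x_0$ I would apply the Morse lemma: using the $C^{3k+1}$ regularity of $\phi$ together with nondegeneracy of $H(\phi)(x_0)$, there is a $C^{2k+1}$ change of variables $\Psi$ with $\Psi(0)=x_0$ such that $\phi(\Psi(y)) = \phi(x_0) + \tfrac{1}{2}y^{\top}Ay$, where $A = H(\phi)(x_0)$; quantitative bounds on $\Psi$ and its inverse depend only on the $C^{3k+1}$-bound on $\phi$ and on a lower bound for $|\det A|$. Setting $w(y) = v(\Psi(y))|\det\Psi'(y)|$, the near-$x_0$ piece reduces to $e^{2\pi i\tau\phi(x_0)}\int w(y)e^{\pi i\tau y^{\top}Ay}dy$, and $w$ inherits $C^{2k}$ regularity with $\|w\|_{C^{2k}}$ controlled by the analogous norm of $v$.

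I would then diagonalize $A$ by an orthogonal change of coordinates and Taylor expand $w$ around the origin as $w(y) = \sum_{|\alpha|<2k}\frac{\partial^{\alpha}w(0)}{\alpha!}y^{\alpha} + R_{2k}(y)$, with $|R_{2k}(y)|\leq C|y|^{2k}\max_{|\alpha|=2k}\sup|\partial^{\alpha}w|$. Each monomial moment $\int y^{\alpha}e^{\pi i\tau\sum_j\lambda_j y_j^2}dy$ factors into one-dimensional Fresnel integrals: odd orders vanish, and even orders give $\tau^{-d/2-|\alpha|/2}$ times a rational function of the eigenvalues $\lambda_j$. Undoing the orthogonal change and collecting terms of equal $\tau$-power produces the prefactor $e^{i\pi\sigma/4}|\det(\tau A)|^{-1/2}$ together with the operators $L_l$, which act as polynomials in derivatives of $v$ and $\phi$ at $x_0$ with only powers of $\det A$ in the denominator; in particular $L_0$ is evaluation at $x_0$.

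The main technical obstacle is the remainder estimate $\int R_{2k}(y)e^{\pi i\tau y^{\top}Ay}dy = O(\tau^{-k-d/2}\sum_{|\alpha|\leq 2k}\sup|\partial^{\alpha}w|)$; absorbing the $|\det(\tau A)|^{-1/2}$ factor then yields the stated $O(\tau^{-k})$ error. I would handle this by a Plancherel argument using that the Fourier transform of $e^{\pi i\tau y^{\top}Ay}$ is $c_\tau e^{-\pi i\xi^{\top}A^{-1}\xi/\tau}$, whose $L^{\infty}$ norm scales like $\tau^{-d/2}|\det A|^{-1/2}$, while the $|y|^{2k}$ factor in $R_{2k}$ translates after $2k$ integrations by parts into a $|\xi|^{-2k}$-type decay. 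Tracking at every step that all constants depend only on the $C^{3k+1}$-norm of $\phi$, a lower bound on $|\det H(\phi)(x_0)|$, and the uniform bound on $|x-x_0|/|\nabla\phi(x)|$ is the real work and is precisely why these three quantities appear explicitly in the hypotheses.
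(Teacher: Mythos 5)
The paper does not prove this lemma; it cites it directly as H\"ormander's Theorem 7.7.5 and only uses its uniformity clause later in the proof of Proposition \ref{Fourierdet}. So what you have written is a from-scratch proof of a result the paper simply quotes, and it is worth comparing your route to H\"ormander's.

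Your strategy---cut off away from $x_0$, integrate by parts in the nonstationary region, straighten the phase to an exact quadratic via the Morse lemma, Taylor-expand the pulled-back amplitude, and bound the remainder by Plancherel against the Fresnel kernel---is a genuine and well-known alternative to H\"ormander's argument, and the individual steps are sound. H\"ormander, however, deliberately avoids the Morse lemma: he writes $\phi(x)=\phi(x_0)+\tfrac12\langle H(\phi)(x_0)(x-x_0),x-x_0\rangle+g(x)$ with $g$ vanishing to third order, absorbs $e^{2\pi i\tau g}$ into the amplitude, Taylor-expands that factor in powers of $\tau$, and applies the exact-quadratic estimate (his Lemma 7.7.3, which is precisely the Plancherel/Fresnel computation you invoke) term by term. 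The payoff of his route is exact control of the regularity budget: no derivatives are spent on a nonlinear change of variables, which is what makes the pairing ``$v\in C^{2k}$, $\phi\in C^{3k+1}$, error $O(\tau^{-k})$'' come out sharp and uniform. Your route spends derivatives on the Morse chart $\Psi$; your claim that $\Psi\in C^{2k+1}$ when $\phi\in C^{3k+1}$ needs a specific quantitative Morse lemma and, more importantly, a proof that $\Psi$ and $\Psi^{-1}$ stay in a bounded set of $C^{2k+1}$ as $\phi$ ranges over a bounded set of $C^{3k+1}$ with $|\det H(\phi)(x_0)|$ bounded below---this uniformity is exactly the content of the last sentence of the lemma and is the part the rest of the paper actually leans on, so it cannot be left implicit. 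Provided you supply that uniform Morse lemma (or retreat to H\"ormander's deformation argument, which sidesteps it), the rest of your outline, including the integration by parts in the nonstationary region and the Fresnel-moment computation producing the prefactor $e^{i\pi\sigma/4}|\det(\tau H(\phi)(x_0))|^{-1/2}$ and the operators $L_l$, is correct.

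One small remark on the statement itself: the factor $v(x_0)$ multiplying $\sum_{l<k}\tau^{-l}L_l v$ is a typo carried in the paper; since $L_0 v=v(x_0)$, that extra factor would double-count. H\"ormander's statement has only the sum $\sum_{l<k}\tau^{-l}L_l v$, and your derivation (correctly) produces that form.
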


\begin{proof}[Proof of Proposition \ref{Fourierdet}]
Choose a partition of $ \partial X$ into open sets $X_j$ such that for each, there is a vector $\xi_j$ with $\xi_j \cdot n(x)\geq \cos (\frac{\pi}{8})$ for all $x\in X_j$. Let $\varphi_j$ be a smooth partition of unity with respect to this covering. Let $\psi_{ij}$ be a smooth partition of unity on $S^{d-1}$ such that $|u\cdot \xi_j |\leq \cos (\frac{\pi}{4}) $ on all of $\supp \psi_{1j}$ and $(-1)^{i} u\cdot \xi_j \geq \cos (\frac{3\pi}{8})$ on $\supp \psi_{ij}$ for $i=2,3$. 

Then we must consider
\begin{equation*}
\sum_j \sum_{i=1,2,3} \int_{X_j} \int_\R f^H (t) (n\cdot u)^2 e^{2\pi i R(a^{-1}x +tn) \cdot u}dt  \varphi_j(x) \psi_{ij}(u) \sigma (dx).
\end{equation*}

On $\supp (\varphi_j \psi_{1j})$, we parametrize $X_j$ as the graph over a plane containing $u$ by rotating $\xi_j^\perp$ as in the proof of Lemma \ref{generalF} and apply partial integration to
\begin{equation*}
\int_{\R^{d-1}} \int_\R f^H (t) (n(y)\cdot u)^2e^{2\pi iR (a^{-1}y + tn(y) )\cdot u}dt \det (J(y,u)) \varphi_j(y)\psi_{1j}(u)dy
\end{equation*}
a large number of times to show that the contribution from the integral is small enough to be ignored.  Here $y$ are the local coordinates and $J$ is a Jacobian depending smoothly on $u$ and $y$.

On $\supp \psi_{ij}$, $i>1$, we parametrize $X_j$ as the graph over $\xi_j^\perp$ and consider
\begin{equation*}
\int_{\R^{d-1}} \int_\R f^H (t) (n(y)\cdot u)^2e^{2\pi iR (a^{-1}x(y) + tn(y) ) \cdot u}dt \det (J(y)) \varphi_j(y)\psi_{ij}(u)dy.
\end{equation*}
This corresponds to Lemma \ref{larslemma} with $\tau =a^{-1}$, $\phi(y)=R{x(y)} \cdot u $, and 
\begin{equation*}
v(y)= \int_\R f^H(t) e^{2\pi iR t n(y) \cdot u}dt (n(y)\cdot u)^2 \det (J(y))\varphi_j(y)\psi_{ij}(u).
\end{equation*}
Here $\nabla \phi(y_0) = 0$ if and only if $n(y_0) $ is parallel to $u$, i.e.\ $y_0=x((-1)^{i}u)$ and in this case $|\det H(\phi)(y_0)| = R^{d-1} K(x(y_0))\det (J(y_0))^2$ which is bounded from above and by a strictly positive constant from below on $\supp \psi_{ij} $ by the curvature assumption. 

Hence Lemma \ref{larslemma} for $k\geq \frac{d+1}{2}$ shows that
\begin{align}\label{larseq}
\MoveEqLeft \bigg|\int_{\R^{d-1}} v(y)e^{2\pi ia^{-1}\phi(y)}dy - e^{2\pi ia^{-1}  \phi(y_0)}e^{i\pi \frac{\sigma}{4} } |\det(a^{-1}H(\phi)(y_0))|^{-\frac{1}{2}} \sum_{l < k} a^l  L_l v \bigg| \\\nonumber
&\leq C a^k  \sum_{|\alpha|\leq 2k} \sup |D^\alpha v|.
\end{align}
Note that for $u\in \supp \psi_{ij}$, $\phi$ stays in a bounded subset of $C^{3k+1}$ and all derivatives of $v$ remain bounded.
Moreover, $|y-y_0|/|\nabla \phi(y)|$ is uniformly bounded since by Taylor's formula
\begin{align*}
\nabla \phi (y) = \nabla \phi (y) -\nabla \phi (y_0) = H( \phi )(y_0)(y-y_0) + O(|y-y_0|^2)
\end{align*}
where the O-term only depends on the third order partial derivatives of $\phi $ and hence is uniform in $u$. Thus
\begin{align*}
|y-y_0|\leq  | H(\phi)(y_0)^{-1}| (|\nabla \phi (y)| + M|y-y_0|^2 ) 
\end{align*}
provides the uniform bound, since 
\begin{align*}
y_0 \mapsto| H(\phi)(y_0)^{-1}|=|J(y_0)^{-1}|^2 \max\{k_i^{-1}(x(\pm u)),i=1,\dots,d-1\},
\end{align*}
 where $k_i$ are the principal curvatures, is uniformly bounded. Hence $C$ can be chosen uniformly.

Moving the $l>0$ terms to the right hand side of \eqref{larseq} and identifying the $l=0$ term yields the claim.
\end{proof}

\begin{proof}[Proof of Theorem \ref{maindet}]
Write $f^H = f\circ \theta^H$. By Theorem \ref{varapprox}, the variance is asymptotically given by
\begin{align*}
\alpha_f^{-2} \sum_{\xi \in \La^*\backslash \{0\}} a^{-d+1} \int_{S^{d-1}}  \bigg| \int_{\partial X} \int_\R  f^H( t) (n\cdot u)^2 e^{2\pi i |\xi|(a^{-1}x+tn) \cdot u} dt \sigma( dx) \bigg|^2 h_\xi (u) du.
\end{align*}
By Corollary \ref{1stecor} and \ref{2ndencor} this converges uniformly when $a\to 0$. Hence
\begin{align*}
& \limsup_a  \sum_{\xi \in \La^*\backslash \{0\}}a^{-d+1}\int_{S^{d-1}} \bigg| \int_{\partial X} \int_\R  f^H( t) (n\cdot u)^2 e^{2\pi i |\xi|(a^{-1}x+tn) \cdot u} dt \sigma( dx) \bigg|^2 h_\xi (u) du\\
&\leq 
\sum_{\xi \in \La^*\backslash \{0\}}\limsup_a \int_{S^{d-1}} a^{-d+1} \bigg| \int_{\partial X} \int_\R  f^H( t) (n\cdot u)^2 e^{2\pi i |\xi|(a^{-1}x+tn) \cdot u} dt \sigma( dx) \bigg|^2 h_\xi (u) du .
\end{align*}

From Proposition \ref{Fourierdet} we get
\begin{align*}
\MoveEqLeft \bigg| a^{-\frac{d-1}{2}}\int_{\partial X} \int_\R  f^H( t) (n\cdot u)^2 e^{2\pi i |\xi|(a^{-1}x+tn) \cdot u} dt \sigma( dx) \\  
&- |\xi|^{-\frac{d-1}{2}} \sum_{\epsilon = \pm 1}\mathcal{F}(f^H)(-\epsilon|\xi|) K(x(\epsilon u))^{-\frac{1}{2}}e^{2\pi i |\xi| a^{-1}x(\epsilon u) \cdot u -\epsilon i\pi (d-1)/4}\bigg| \leq C a  
\end{align*}
where $C$ is uniform in $u$, so that
\begin{align*}
\MoveEqLeft \limsup_a  \int_{S^{d-1}} a^{-d+1}\bigg| \int_{\partial X} \int_\R  f^H( t) (n\cdot u)^2 e^{2\pi i |\xi|(a^{-1}x+tn) \cdot u} dt \sigma( dx)\bigg|^2 h_\xi(u)du \\
&= \limsup_a  |\xi|^{-d+1}  \int_{S^{d-1}}\bigg| \sum_{\epsilon = \pm 1}\mathcal{F}(f^H)(-\epsilon|\xi|) K(x(\epsilon u))^{-\frac{1}{2}}\\
&\quad \times e^{2\pi i|\xi| a^{-1} x(\epsilon u) \cdot u - \epsilon i\pi (d-1)/4}\bigg|^2 h_\xi(u)du.
\end{align*}
Writing the latter sum out, noting that $h_{-\xi}(u)=h_\xi(-u)$, and using that
\begin{equation*}
\int_{S^{d-1}} K(x(u))^{-1}h(u) du = \int_{\partial X} h(n(x)) \sigma(dx),
\end{equation*}
see \cite[Section 2.5]{schneider}, yields the claim.
\end{proof}

\subsection{Random sets}
Following the idea of \cite{kieu}, we now turn to the situation  where we observe a random set $sQX$ where $Q\in \SO(d)$ is a random rotation and $s>0$ a random scaling parameter. We assume $(Q,s)$ has a joint density $h(Q,s)$ that is smooth and compactly supported in $\SO(d)\times (0, \infty)$. 

In this case we try to estimate the mean surface area 
\begin{equation*}
ES(sQX) = S(X) \int_{SO(d)\times \R} s^{d-1} h(Q,s) d(Q,s).
\end{equation*}
When $\partial X$ is smooth, it follows from the proof of Lemma \ref{diff} and the Weyl tube formula that
\begin{equation*}
\bigg| a^{-1} \int_{\R^d} f \circ \theta^{X}_a (x) dx - \alpha_f S(X)\bigg| \leq MS(X)a^\eps
\end{equation*}
where $\eps > 0$ and $M$ only depends on $X$ through  an upper bound on the principal curvatures of $X$. Hence uniform convergence 
and  the assumption  that $s$ is bounded from below and above show that $\hat{S}(f)^{a,a}$ is asymptotically unbiased:
\begin{equation*}
\lim_{a\to 0} E \hat{S}(f)^{a,a}(sQX) = ES(sQX).
\end{equation*}

To describe the variance, observe first that
\begin{equation*} 
\mathcal{F}(f\circ \theta_a^{sQ X}(\xi)) = s^d \mathcal{F}(g_{as^{-1}})(sQ^{-1 } \xi)
\end{equation*}
such that
\begin{align*} 
E\big|\mathcal{F}(f\circ \theta_a^{sQ X}(\xi))\big|^2 = \int_{SO(d)} \int_\R s^{2d} |\mathcal{F}(g_{as^{-1}})(a^{-1}sQ^{-1 } \xi)|^2 h(Q,s ) ds dQ\\
= \int_\R \int_{S^{d-1}}  s^{2d} |\mathcal{F}(g_{as^{-1}})(a^{-1} s|\xi|u) |^2 h_\xi(u,s)  du ds .
\end{align*}

In this case, the calculations in the previous section carry over since the partial derivatives of $h_\xi$ are bounded in $(u,s,\xi)$ simultaneously and the curvature of $sQX$ is bounded from both below and above.
We note only the special case where $X$ is isotropic so that $h(Q,s)=h(s)$. Then the variance converges for $a\to 0$:
\begin{thm}\label{randomdet}
Assume $X$ is a smooth compact convex set with nowhere vanishing Gaussian curvature. Consider the random set $sQX$ where $Q\in SO(d)$ is uniform random and $s\in (0,\infty)$ is random with smooth compactly supported density $h$. Suppose $f$ is $C^3$ and  $\rho$ satisfies Condition \ref{cond1} or \ref{cond2} with $s>2d+1$. 

 Then $\hat{S}(f)^{a,a}(sQX)$ is asymptotically unbiased and
\begin{align}\label{limvar}
\MoveEqLeft \lim_{a \to 0} a^{-d+1}\Var(\hat{S}(f)^{a,a}(sQX))\\
&= 2 \omega_{d}^{-1}\alpha_f^{-2}ES(sQX)\sum_{\xi \in \La^*\backslash \{0\}} \big| \mathcal{F}(f\circ \theta^H)(|\xi|) \big|^2|\xi|^{-d+1}.\nonumber
\end{align}
\end{thm}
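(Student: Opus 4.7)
The plan is to revisit the proof of Theorem~\ref{maindet} while carrying the extra integration against the density $h(s)$ throughout, and to show that the oscillating term $Z(a)$ is killed by this averaging. Asymptotic unbiasedness is immediate: the bound $|a^{-1}\int f\circ\theta_a^X(x)dx-\alpha_fS(X)|\leq MS(X)a^\eps$ stated just before the theorem, applied conditionally on $(s,Q)$ with $M$ uniform in $s\in\supp(h)$, gives $\lim_{a\to 0}E\hat{S}(f)^{a,a}(sQX)=ES(sQX)$ by dominated convergence.

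For the variance, I combine the identity from the text with \eqref{formelvar} applied conditionally on $(s,Q)$ and average out the uniform rotation $Q$, to write
\begin{equation*}
\Var(\hat{S}(f)^{a,a}(sQX)) = (a\alpha_f)^{-2}\omega_d^{-1}\sum_{\xi\in\La^*\backslash\{0\}}\int_\R s^{2d}h(s)\int_{S^{d-1}}|\mathcal{F}(g_{as^{-1}})(sa^{-1}|\xi|v)|^2\,dv\,ds.
\end{equation*}
Applying Proposition~\ref{Fourierdet} with $R=|\xi|$ and $a$ replaced by $as^{-1}$---so that the large stationary-phase parameter is $R(as^{-1})^{-1}=sa^{-1}|\xi|$---yields, after multiplying by $s^{2d}$,
\begin{equation*}
s^{2d}|\mathcal{F}(g_{as^{-1}})(sa^{-1}|\xi|v)|^2 \approx a^{d+1}s^{d-1}|\xi|^{-d+1}\Bigl|\sum_{\epsilon=\pm 1}\mathcal{F}(f\circ\theta^H)(\epsilon|\xi|)K(x(\epsilon v))^{-1/2}e^{i\phi_\epsilon(s,v)}\Bigr|^2,
\end{equation*}
with $\phi_\epsilon(s,v) = -2\pi sa^{-1}|\xi|x(\epsilon v)\cdot v + \epsilon\pi(d-1)/4$; the error is uniform in $s\in\supp(h)$ because the $C^{3k+1}$-norms of the phase and the curvature bounds on $sX$ are uniformly controlled when $s$ ranges over a compact subset of $(0,\infty)$. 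Expanding the square, the diagonal part $|\mathcal{F}(f\circ\theta^H)(|\xi|)|^2(K(x(v))^{-1}+K(x(-v))^{-1})$ integrates over $s^{d-1}h(s)\,ds$ and $dv$ to $2E(s^{d-1})S(X)|\mathcal{F}(f\circ\theta^H)(|\xi|)|^2 = 2ES(sQX)|\mathcal{F}(f\circ\theta^H)(|\xi|)|^2$, using $\int_{S^{d-1}}K(x(v))^{-1}dv = S(X)$ and $ES(sQX) = E(s^{d-1})S(X)$; this produces the claimed main term. The cross term carries the oscillating factor $e^{-2\pi isa^{-1}|\xi|w(v)}$ with $w(v) = (x(v)-x(-v))\cdot v > 0$ the width function, uniformly bounded below on $S^{d-1}$. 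Since $s^{d-1}h(s)$ is $C^\infty$ with compact support in $(0,\infty)$, its Fourier transform in $s$ at frequency $|\xi|w(v)/a$ decays faster than any polynomial, so the cross-term integral is $O(a^N|\xi|^{-N})$ for every $N$, hence negligible after summing.

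The main obstacle will be to interchange the limit $a\to 0$ with the infinite sum over $\xi\in\La^*\backslash\{0\}$. Here I invoke Corollary~\ref{1stecor} to bound each summand uniformly in $a$ and $s\in\supp(h)$, and use $\sum_\xi|\mathcal{F}(f\circ\theta^H)(|\xi|)|^2|\xi|^{-d+1}<\infty$ (by Plancherel, since $f\circ\theta^H\in L^2(\R)$ is smooth) to obtain a summable dominator, whereupon dominated convergence in the $\xi$-sum identifies the limit with the claimed expression.
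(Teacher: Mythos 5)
Your proposal follows essentially the same route as the paper's proof: you apply the scaling identity $\mathcal{F}(f\circ\theta_a^{sQX})(\xi)=s^d\mathcal{F}(g_{as^{-1}})(sQ^{-1}\xi)$, invoke Proposition~\ref{Fourierdet} with $a$ replaced by $as^{-1}$ (noting that the constant can be chosen uniformly in $s$ over the compact support of $h$, which is precisely what the paper argues via the uniform curvature bounds), split the squared modulus into a diagonal part yielding $2S(X)|\mathcal{F}(f\circ\theta^H)(|\xi|)|^2$ after integrating $K(x(u))^{-1}$ over the sphere, and kill the cross term by smoothness of $s^{d-1}h(s)$. You cite superpolynomial decay of $\mathcal{F}(s^{d-1}h)$ where the paper only needs Riemann--Lebesgue, and your aside that the summability of $\sum_\xi|\mathcal{F}(f\circ\theta^H)(|\xi|)|^2|\xi|^{-d+1}$ follows ``by Plancherel'' is loose (the actual reason is the polynomial decay of $\mathcal{F}(f\circ\theta^H)$ coming from the compact support and $C^3$ regularity of $f\circ\theta^H$, together with Corollary~\ref{1stecor} supplying a summable dominator in $\xi$ uniform in $a$ and $s$), but these are presentational points rather than gaps; the structure and the key technical inputs match the paper's argument.
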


\begin{proof}
The proof goes as the proof of Theorem \ref{maindet}. Dominated convergence and Theorem \ref{varapprox} and Corollary \ref{1stecor} and \ref{2ndencor} show that the left hand side of \eqref{limvar} is
\begin{align*}
& \lim_{a\to 0} a^{-d+1} \omega_{d}^{-1}\alpha_f^{-2}\sum_{\xi \in \La^*\backslash \{0\}}E \bigg|s^{d-1} \int_{\partial X} \int_\R  f^H( t) (n\cdot u)^2 e^{2\pi i |\xi|(sa^{-1}x+tn) \cdot u} dt \sigma( dx) \bigg|^2 \\
 &=
 \sum_{\xi \in \La^*\backslash \{0\}}\lim_{a\to 0}  a^{-d+1} \omega_{d}^{-1}\alpha_f^{-2}E\bigg| s^{d-1} \int_{\partial X} \int_\R  f^H( t) (n\cdot u)^2 e^{2\pi i |\xi|(sa^{-1} x+tn) \cdot u} dt \sigma( dx) \bigg|^2 
\end{align*}
if the latter limits exist. But since the principal curvatures of $sX$ are bounded from above and below, the proof of Proposition \ref{Fourierdet} carries over to show that the constant can be chosen independently of $s$ and hence:
\begin{align*}
\MoveEqLeft \lim_{a\to 0} a^{-d+1} \int_{S^{d-1}} \int_\R s^{2d-2}h(s) \bigg| \int_{\partial X} \int_\R  f^H( t) (n\cdot u)^2 e^{2\pi i |\xi|(sa^{-1}x+tn) \cdot u} dt \sigma( dx)\bigg|^2  ds du\\
 & =  2ES(sQX)\big|\mathcal{F}(f^H)(|\xi|)\big|^2 |\xi|^{-d+1} + \lim_{a\to 0}  2 \textrm{Re} \bigg(\mathcal{F}(f^H)(-|\xi|)^2 |\xi|^{-d+1}e^{-i\pi\frac{d-1}{2}}  \\
&\quad \times \int_{S^{d-1}} (K(x( u))K(x( -u)))^{-\frac{1}{2}} \int_{\R} s^{d-1} h(s)  e^{2\pi i|\xi|s a^{-1}  (x( u) -x(- u))\cdot u } dsdu\bigg)\\
& = 2ES(sQX)\big|\mathcal{F}(f^H)(|\xi|)\big|^2 |\xi|^{-d+1} .
\end{align*}
The last equality follows from dominated convergence, using that $\mathcal{F}(s^{d-1}h(s))(t) \to 0$ for $|t|\to \infty$ and the fact that 
\begin{equation*}
(x( u) -x(- u))\cdot u \neq 0
\end{equation*}
when $X$ has non-empty interior.
\end{proof}

\section{Discussion and open questions}
This paper shows that the asymptotic variance of order at most $O(a^{-1}b^d)$, implying that the variance is relatively well behaved asymptotically. 
It follows that the variance increases when $a$ becomes small and decreases with $b$ as one would expect. However, it is interesting that 
the dependence on $b$ is much stronger. In particular, $\lim_{a \to 0} \Var(\hat{S}(f)^{a,b}(X))= 0 $ whenever $b \in o(a^{\frac{1}{d}})$. For $a=b$ the variance is of order $O(a^{d-1})$. For comparison, the order for volume estimators was $O(a^{d+1})$.

The bounding constant in Theorem \ref{vargeneral} is not claimed to be best possible. For $a=b$ it seems to come from a bound on $\mathcal{F}(f\circ \theta^H)(|\xi|)$. However, keeping \eqref{hurtig} in mind, it still seems informative to investigate which weight function minimizes it. Since $\hat{S}(f)$ is normalized by the factor $\alpha_f$, scaling $f$ does not change the algorithm. Hence we may assume $\sup f =1$. Clearly we want $f$ to be positive and as close to $\mathds{1}_{(0,1)}$ as possible to minimize $\alpha_{|f|}\alpha_f^{-2}$. Moreover, $\int_\R |(f\circ \theta^H)'(t)|dt$ is minimal when $f$ has a single local maximum. All this suggests using (a $C^2$ approximation of) an indicator function $f=\mathds{1}_{[\beta, \omega]}$ with $[\beta, \omega] \subseteq (0,1)$ large.

On the other hand, choosing $[\beta, \omega]$ too large will slow down the convergence of the mean, see \cite{am4}. Here it was also suggested to choose $\omega=1-\beta$ in order to ensure that the asymptotic bias is only of order $O(a^2)$.

The results of this paper also provide an estimation formula for the variance that applies to a certain class of convex sets. It seems likely that the set class can be extended, c.f.\ \cite{janacek,kieu}.
In general, the conditions on $X$, $f$, and $\rho$ may not have been squeezed. The main focus has been on obtaining results that hold in the case of a non-compactly supported PSF, since this is the situation most commonly asked for by the applying scientists, see \cite{kothe}. Though unbiasedness results no longer hold, the case of a non-radial PSF is also of interest.

The assumption of the paper has been that the lattice is randomly rotated. The strength, however, of grey-scale images is that they are asymptotically unbiased even when the lattice orientation is fixed. 
The problem in this case seems to be the approximation result in Corollary \ref{2ndencor}. From there, the results would carry over. 

In the isotropic case, there are also asymptotically unbiased estimators based on black-and-white images \cite{am2} and it would be interesting to know whether the above techniques apply in this setting too. 
Asymptotically unbiased estimators for the integrated mean curvature are  known too, but results may be harder to obtain. 



\section*{Acknowledgements}
This research was funded by a grant from the Carlsberg Foundation and hosted by the Institute of Stochastics at Karlsruhe Institute of Technology. The author wishes to thank Markus Kiderlen for helpful advice.



\begin{thebibliography}{99}
\bibitem{brandolini} L. Brandolini, S. Hofmann, A. Iosevich,  Sharp rate of average decay of the Fourier transform of a bounded set, Geom. Funct. Anal. 13 no. 4 (2003) 671--680.


\bibitem{coeurjolly} D. Coeurjolly, F. Flin, O. Teytaud, L. Tougne, Multigrid convergence
and surface area estimation, in: {The\-o\-re\-ti\-cal Foundations of Computer
Vision Geometry, Mor\-pho\-lo\-gy, and Computational Imaging,} LNCS, 2616, Springer, Berlin, 2003, 101--119. 

\bibitem{herz} C. S. Herz, Fourier transforms related to convex sets, {Ann. of Math. (2)} {75} (1962) 81--92. 


\bibitem{hlawka} E. Hlawka, \"{U}ber Integrale auf konvexen K\"{o}rpern I, {Monatsh. Math.} {54} (1950) 1--36.


\bibitem{lars} L. H\"{o}rmander, {The Analysis of Partial Differential Operators I}, Springer, Berlin, 1983.

\bibitem{janacek} J. Jan\'{a}\v{c}ek, An asymptotics of variance of the lattice points count, {arXiv:math/0703415} (2007).


\bibitem{kendall} D. G. Kendall, On the number of lattice points inside a random oval, {Q. J. Math. Oxford Ser.} {19} (1948)  1--26. 
\bibitem{kendall2} D. G. Kendall, R. Rankin, On the number of points of a given lattice in a random hypersphere, {Q. J. Math. Oxford Ser.} {4} (1953) 178--189.
\bibitem{rataj} M. Kiderlen, J. Rataj, On infinitesimal increase of vo\-lumes of mor\-pho\-lo\-gi\-cal transforms, {Mathematika} {53} no. 1  (2007) 103--127.

\bibitem{kieu} K. Ki\^{e}u, M. Mora, Precision of stereological planar area predictors, {J. Microsc.} {222} no. 3 (2006) 201--211. 


\bibitem{digital} R. Klette, A. Rosenfeld,  {Digital Geometry}, Elsevier, San Francisco 2004.

\bibitem{sun} R. Klette, H. J. Sun,  Digital planar segment based polyhedrization for
surface area estimation, in: C.  Arcelli, L. P. Cordella, G. Sanniti di Baja (Eds.), {Visual Form 2001}, LNCS 2059, Springer, Capri, Italy,
2001, 356--366. 

\bibitem{kothe} U. K\"{o}the, What can we learn from discrete images about the continuous world? in: {Discrete Geometry for Computer Imagery, Proc. DGCI 2008, LNCS} {4992}, Springer, Berlin, 2008,  4--19. 

\bibitem{lindblad} J. Lindblad, Surface area estimation of digitized 3D objects using weighted local configurations,
{Image Vis. Comput.} {23} (2005) 111--122. 


\bibitem{matern} B. Mat\'{e}rn, Precision of area estimation: a numerical study, {J. Microsc.} {153} (1989) 269--284. 


\bibitem{matheron} G. Matheron, Les variables r\'{e}gionalis\'{e}es et leur estimation, Masson, Paris, 1965.

\bibitem{mecke2} K. Mecke,  Morphological characterization of patterns in reaction-diffusion systems,
{Phys. Rev. E.} {53} (1996) 4794--4800. 

\bibitem{OM} J. Ohser, F. M\"{u}cklich, {Statistical Analysis of Microstructures}, John Wiley \& Sons, Ltd, Chichester, 2000.




\bibitem{schneider} R. Schneider, {Convex bodies: The Brunn--Minkowski Theory}, Cambridge University Press, Cambridge, 1993.




\bibitem{stein2} E. M. Stein, {Singular Integrals and Differentiability Properties}, Princeton University Press, Princeton, 1970.

\bibitem{stein} E. M. Stein, G. Weiss, {Introduction to Fourier Analysis}, Princeton University Press, Princeton, 1971.




\bibitem{am2} A. M. Svane, Local digital algorithms for estimating the integrated mean curvature of $r$-regular sets, Revised version, {CSGB Research Report} no. 8 (2012).

\bibitem{am3} A. M. Svane, On multigrid convergence of local algorithms for intrinsic volumes, to appear in: {J. Math. Imaging Vis.} DOI
10.1007/s10851-013-0450-7.

\bibitem{am4} A. M. Svane, Estimation of intrinsic volumes from digital grey-scale images, to appear in: {J. Math. Imaging Vis.} DOI
10.1007/s10851-013-0469-9. 



\bibitem{watson} G. N. Watson, {A treatise on the theory of Bessel functions}, Cambridge University Press, Cambridge, 1966.

\bibitem{weyl} H. Weyl, On the volume of tubes, {Am. J. Math.} {61} (1939) 461--472. 

\bibitem{johanna} J. Ziegel, M. Kiderlen, Estimation of surface area and surface area measure of three-dimensional sets from digitizations, {Image Vision and Computing} {28} (2010) 64--77.


\end{thebibliography}
\end{document}